\newcommand\thmsname{\protect\theoremname}
\newcommand\nm@thmtype{theorem}
\theoremstyle{plain}
\newenvironment{namedthm}[1][Undefined Theorem Name]{
  \ifx{#1}{Undefined Theorem Name}\renewcommand\nm@thmtype{theorem*}
  \else\renewcommand\thmsname{#1}\renewcommand\nm@thmtype{namedtheorem}
  \fi
  \begin{\nm@thmtype}}
  {\end{\nm@thmtype}}
\theoremstyle{remark}
\newtheorem*{acknowledgement*}{\protect\acknowledgementname}
\theoremstyle{plain}
\newtheorem{thm}{\protect\theoremname}[section]
\theoremstyle{definition}
\newtheorem{defn}[thm]{\protect\definitionname}
\theoremstyle{plain}
\newtheorem{prop}[thm]{\protect\propositionname}
\theoremstyle{remark}
\newtheorem{rem}[thm]{\protect\remarkname}
\theoremstyle{plain}
\newtheorem{lem}[thm]{\protect\lemmaname}
\theoremstyle{plain}
\newtheorem{cor}[thm]{\protect\corollaryname}
\theoremstyle{remark}
\newtheorem{notation}[thm]{\protect\notationname}
\theoremstyle{definition}
\newtheorem{example}[thm]{\protect\examplename}
\newcommand{\lyxaddress}[1]{
	\par {\raggedright #1
	\vspace{1.4em}
	\noindent\par}
}
\providecommand{\acknowledgementname}{Acknowledgement}
\providecommand{\corollaryname}{Corollary}
\providecommand{\definitionname}{Definition}
\providecommand{\examplename}{Example}
\providecommand{\lemmaname}{Lemma}
\providecommand{\notationname}{Notation}
\providecommand{\propositionname}{Proposition}
\providecommand{\remarkname}{Remark}
\providecommand{\theoremname}{Theorem}
\begin{document}
\title{\textbf{\textsc{Some computations of stable twisted homology for mapping
class groups}}}
\author{\textsc{\textcolor{black}{Arthur Soulié}}}
\maketitle
\begin{abstract}
In this paper, we deal with stable homology computations with twisted
coefficients for mapping class groups of surfaces and of $3$-manifolds,
automorphism groups of free groups with boundaries and automorphism
groups of certain right-angled Artin groups. On the one hand, the
computations are led using semidirect product structures arising naturally
from these groups. On the other hand, we compute the stable homology
with twisted coefficients by $FI$-modules. This notably uses a decomposition
result of the stable homology with twisted coefficients for pre-braided
monoidal categories proved in this paper.
\end{abstract}
{\let\thefootnote\relax\footnotetext{This work was partially supported by the ANR Project {\em ChroK}, {\tt ANR-16-CE40-0003} and by a JSPS Postdoctoral Fellowship Short Term.\\2010 \textit{Mathematics Subject Classification}: 
 20J05, 20J06, 20F38, 18D10.\\Keywords: Stable homology, Mapping class groups, Functor categories, Polynomial functors.}}

\section*{Introduction}

Computing the homology of a group is a fundamental question and can
be a very difficult task. For example, a complete understanding of
all the homology groups of mapping class groups of surfaces and $3$-manifolds
remains out of reach at present time: this is an active research topic
(see \cite{Harer1985,HatcherWahl2,MadsenWeiss} for constant coefficients
and \cite{harer1991third,Kawa1} for twisted coefficients).

In \cite{WahlRandal-Williams}, Randal-Williams and Wahl prove homological
stability with twisted coefficients for some families of groups, including
mapping class groups of surfaces and $3$-manifolds. They consider
a set of groups $\left\{ G_{n}\right\} _{n\in\mathbb{N}}$ such that
there exist a canonical injections $G_{n}\hookrightarrow G_{n+1}$.
Let $\mathcal{G}$ be the groupoid with objects indexed by natural
numbers, with the groups $\left\{ G_{n}\right\} _{n\in\mathbb{N}}$
as automorphism groups and with no morphisms between distinct objects.
We consider Quillen's bracket construction on $\mathcal{G}$ (see
\cite[p.219]{graysonQuillen}), denoted by $\mathfrak{U}\mathcal{G}$,
and $\mathbf{Ab}$ the category of abelian groups. Randal-Williams
and Wahl show that for particular kinds of functors $F:\mathfrak{U}\mathcal{G}\rightarrow\mathbf{Ab}$
(namely coefficients systems of finite degree, see \cite[Section 4]{WahlRandal-Williams}),
then the canonical induced maps
\[
H_{*}\left(G_{n},F\left(n\right)\right)\rightarrow H_{*}\left(G_{n+1},F\left(n+1\right)\right)
\]
are isomorphisms for $N\left(*,d\right)\leq n$ with some $N\left(*,d\right)\in\mathbb{N}$
depending on $*$ and $d$. The value of the homology for $n\geq N\left(*,d\right)$
is called the stable homology of the family of groups $\left\{ G_{n}\right\} _{n\in\mathbb{N}}$
and denoted by $H_{*}\left(G_{\infty},F_{\infty}\right)$.

In this paper, we are interested in explicit computations of the stable
homology with twisted coefficients for mapping class groups. On the
one hand, using semidirect product structures naturally arising from
mapping class groups, on the strength of Lyndon-Hochschild-Serre spectral
sequence and of certain stability results, we prove:
\begin{namedthm}[\textit{Theorem A }(Theorems \ref{thm:homologymcgtensorproduct} and
\ref{thm:resultAutFnwithbound})]
We have:

\begin{enumerate}
\item We denote by $\Gamma_{g,1}$ the isotopy classes of diffeomorphisms
restricting to the identity on the boundary component of a compact
connected orientable surface with one boundary component and genus
$g\geq0$. Then, for $m$, $n$ and $q$ natural numbers such that
$2n\geq3q+m$, there is an isomorphism:
\[
H_{q}\left(\Gamma_{n,1},H_{1}\left(\varSigma_{n,1},\mathbb{Z}\right)^{\otimes m}\right)\cong\underset{\left\lfloor \frac{q-1}{2}\right\rfloor \geq k\geq0}{\bigoplus}H_{q-(2k+1)}\left(\Gamma_{n,1},H_{1}\left(\varSigma_{n,1},\mathbb{Z}\right)^{\otimes m-1}\right).
\]
Hence, we recover the results of \cite{harer1991third} and \cite{Kawa1}.
\item We consider the graph $\mathscr{G}_{n,k}^{s}$ defined as the wedge
of $n\in\mathbb{N}$ circles with $k\in\mathbb{N}$ distinguished
circles joined by arcs to the basepoint $p_{0}$ and $s-1\in\mathbb{N}$
extra basepoints joined by new edges to $p_{0}$. We denote by $A_{n,k}^{s}$
the group of path-components space of homotopy equivalences of the
graph $\mathscr{G}_{n,k}^{s}$ (we refer the reader to Section \ref{subsec:autFnwithboundaries}
for a complete introduction to these groups). In particular it is
the quotient of the mapping class group of some $3$-manifold (see
Remark \ref{rem:autfreewithboundquotient}). For $s\geq2$ and $q\geq1$
be natural numbers and $F:\mathfrak{gr}\rightarrow\mathbf{Ab}$ a
reduced polynomial functor where $\mathfrak{gr}$ denotes the category
of finitely generated free groups. There is an isomorphism:
\[
H_{q}\left(A_{\infty,0}^{s},F_{\infty}\right)=0.
\]
Moreover, $H_{q}\left(A_{n,k}^{s},\mathbb{Q}\right)=0$ for all  natural
numbers $n\geq3q+3$ and $k\geq0$. We thus recover the results of
\cite{jensenhol} for holomorphs of free groups.
\end{enumerate}
\end{namedthm}
On the other hand, we deal with stable homology computations for mapping
class groups with twisted coefficients factoring through some finite
groups. Let $\left(\Sigma,\sqcup,\emptyset\right)$ be the symmetric
monoidal groupoid with objects the finite sets, with automorphism
groups the symmetric groups and with no morphisms between distinct
objects, the monoidal structure is given by the disjoint union $\sqcup$.
Quillen's bracket construction $\mathfrak{U}\Sigma$ is equivalent
to the category $FI$ of finite sets and injections used in \cite{ChurchEllenbergFarbstabilityFI}.
For $R$ a commutative ring, $R\textrm{-}\mathfrak{Mod}$ denotes
the category of $R$-modules. We prove the following results.
\begin{namedthm}[\textit{Theorem B }(Proposition \ref{prop:Bigresultbraidgroup}, Proposition
\ref{prop:Bigresultmcgorient}, Example \ref{exa:caseofautfree})]
Let $\mathbb{K}$ be a field of characteristic zero and $d$ be a
natural number. Considering functors $F:FI\rightarrow\mathbb{K}\textrm{-}\mathfrak{Mod}$,
we have:

\begin{enumerate}
\item For $n$ a natural number, we respectively denote by $\mathbf{B}_{n}$
the braid group on $n$ strands and by $\mathbf{PB}_{n}$ the pure
braid group on $n$ strands. Then, $H_{d}\left(\mathbf{B}_{\infty},F_{\infty}\right)\cong\underset{n\in FI}{\textrm{Colim}}\left(H_{d}\left(\mathbf{PB}_{n},\mathbb{K}\right)\underset{\mathbb{K}}{\otimes}F\left(n\right)\right)$.
\item $H_{d}\left(\Gamma_{\infty,1}^{\infty},F_{\infty}\right)\cong\underset{n\in FI}{\textrm{Colim}}\left[\underset{k+l=d}{\bigoplus}\left(H_{k}\left(\Gamma_{n,1},\mathbb{K}\right)\underset{\mathbb{K}}{\otimes}H_{l}\left(\left(\mathbb{C}P^{\infty}\right)^{\times n},\mathbb{K}\right)\right)\underset{\mathbb{K}}{\otimes}F\left(n\right)\right]$,
where $\Gamma_{g,1}^{s}$ denotes the isotopy classes of diffeomorphisms
permuting the marked points and restricting to the identity on the
boundary component of a compact connected orientable surface with
one boundary component, genus $g\geq0$ and $s\geq0$ marked points.
It follows from Madsen-Weiss theorem \cite{MadsenWeiss} that $H_{2k+1}\left(\Gamma_{\infty,1}^{\infty},F_{\infty}\right)=0$
for all natural numbers $k$.
\item $H_{d}\left(Aut\left(\left(\mathbb{Z}^{*k}\right)^{\times\infty}\right),F_{\infty}\right)=0$
for a fixed natural number $k\geq2d+1$ (where $*$ denotes the free
product of groups).
\end{enumerate}
\end{namedthm}
Moreover, we may make further explicit calculations for specific polynomial
$FI$-modules $F$ using the formulas of Theorem $B$, although it
generally requires some non-trivial extra work: the key point is to
understand the $FI$-module structure of the left-hand terms in the
formula. Namely, the action of the symmetric group $\mathfrak{S}_{n}$
on the homology group $H_{q}\left(\mathbf{PB}_{n},\mathbb{K}\right)$
is studied in \cite[Example 5.1.A]{ChurchEllenbergFarbstabilityFI}
using the computations of \cite{arnold} and the one on $\underset{k+l=d}{\bigoplus}\left(H_{k}\left(\Gamma_{-,1},\mathbb{K}\right)\underset{\mathbb{K}}{\otimes}H_{l}\left(\left(\mathbb{C}P^{\infty}\right)^{\times-},\mathbb{K}\right)\right)$
is induced by the action of the symmetric groups on the homology of
$\mathbb{C}P^{\infty}$. Therefore we can understand the $FI$-module
structure of the associated pointwise tensor product and then compute
the colimit with respect to $FI$.

The proof of Theorem $B$ requires a splitting result for the twisted
stable homology for some families of  groups: this decomposition consists
in the graded direct sum of tensor products of the homology of an
associated category with the stable homology with constant coefficients.
Namely, we assume that the category $\left(\mathfrak{U}\mathcal{G},\natural,0\right)$
is pre-braided  homogeneous (we refer the reader to Section \ref{sec:Appendix}
for an introduction to these notions) such that the unit $0$ is an
initial object. For a functor $F:\mathfrak{U}\mathcal{G}\rightarrow\mathbf{Ab}$,
we denote by $H_{*}\left(\mathfrak{U}\mathcal{G},F\right)$ the homology
of the category $\mathfrak{U}\mathcal{G}$ with coefficient in $F$
(we refer the reader to the papers \cite[Section 2]{FranjouPira}
and \cite[Appendice A]{DV1} for an introduction to this last notion).
We prove the following statement.
\begin{namedthm}[\textit{Theorem C }(Theorem \ref{thm:step1}) ]
Let $\mathbb{K}$ be a field. For all functors $F:\mathfrak{U}\mathcal{G}\rightarrow\mathbb{K}\textrm{-}\mathfrak{Mod}$,
we have a natural isomorphism of $\mathbb{K}$-modules:
\[
H_{*}\left(G_{\infty},F_{\infty}\right)\cong\underset{k+l=*}{\bigoplus}\left(H_{k}\left(G_{\infty},\mathbb{K}\right)\underset{\mathbb{K}}{\otimes}H_{l}\left(\mathfrak{U}\mathcal{G},F\right)\right).
\]
\end{namedthm}
If the groupoid $\mathcal{G}$ is symmetric monoidal, then Theorem
$C$ recovers the previous analogous results \cite[Propositions 2.22, 2.26]{DV1}.

The paper is organized as follows. In Section \ref{sec:Appendix},
we recall notions on Quillen's bracket construction, pre-braided monoidal
categories and  homogeneous categories. In Section \ref{sec:sssp},
after setting up the general framework for the families of groups
we will deal with and applying Lyndon-Hochschild-Serre spectral sequence,
we prove the various results of Theorem $A$. In Section \ref{sec:FI-Mod},
the first part is devoted to the proof of the decomposition result
Theorem $C$. Then we deal with the twisted stable homology for mapping
class groups with non-trivial finite quotient groups and prove Theorem
$B$.

\paragraph{General notations.}

We fix $R$ a commutative ring and $\mathbb{K}$ a field throughout
this work. We denote by $R\textrm{-}\mathfrak{Mod}$ the categories
of $R$-modules.

Let $\mathfrak{Cat}$ denote the category of small categories. Let
$\mathfrak{C}$ be an object of $\mathfrak{Cat}$. We use the abbreviation
$Obj\left(\mathfrak{C}\right)$ to denote the objects of $\mathfrak{C}$.
For $\mathfrak{D}$ a category, we denote by $\mathbf{Fct}\left(\mathfrak{C},\mathfrak{D}\right)$
the category of functors from $\mathfrak{C}$ to $\mathfrak{D}$.
If $0$ is initial object in the category $\mathfrak{C}$, then we
denote by $\iota_{A}:0\rightarrow A$ the unique morphism from $0$
to $A$. The maximal subgroupoid $\mathscr{G}\mathfrak{r}\left(\mathfrak{C}\right)$
is the subcategory of $\mathfrak{C}$ which has the same objects as
$\mathfrak{C}$ and of which the morphisms are the isomorphisms of
$\mathfrak{C}$. We denote by $\mathscr{G}\mathfrak{r}:\mathfrak{Cat}\rightarrow\mathfrak{Cat}$
the functor which associates to a category its maximal subgroupoid.

We take the convention that the set of natural numbers $\mathbb{N}$
is the set of nonnegative integers $\left\{ 0,1,2,\ldots\right\} $.
We denote by $\left(\mathbb{N},\leq\right)$ the category of natural
numbers considered as a directed set. For all natural numbers $n$,
we denote by $\gamma_{n}$ the unique element of $Hom_{\left(\mathbb{N},\leq\right)}\left(n,n+1\right)$.
For all natural numbers $n'$ such that $n'\geq n$, we denote by
$\gamma{}_{n,n'}:n\rightarrow n'$ the unique element of $Hom_{\left(\mathbb{N},\leq\right)}\left(n,n'\right)$,
composition of the morphisms $\gamma{}_{n'-1}\circ\gamma{}_{n'-2}\circ\cdots\circ\gamma{}_{n+1}\circ\gamma{}_{n}$.
The addition defines a strict monoidal structure on $\left(\mathbb{N},\leq\right)$,
denoted by $\left(\left(\mathbb{N},\leq\right),+,0\right)$.

We denote by $\mathfrak{Gr}$ the category of groups, by $*$ the
coproduct in this category, by $\mathbf{Ab}$ the full subcategory
of $\mathfrak{Gr}$ of abelian groups and by $\mathfrak{gr}$ the
full subcategory of $\mathfrak{Gr}$ of finitely generated free groups.
Recall that the free product of groups $*$ defines a monoidal structure
over $\mathfrak{gr}$, with the trivial group $0_{\mathfrak{Gr}}$
the unit, denoted by $\left(\mathfrak{gr},*,0_{\mathfrak{Gr}}\right)$.
We denote by $\times$ the direct product of groups and by $Aut\left(G\right)$
the automorphism group of a group $G$.
\begin{acknowledgement*}
The author wishes to thank most sincerely Christine Vespa, Nathalie
Wahl and Geoffrey Powell, for their reading, suggestions and advice.
He would also like to thank Aurélien Djament, Nariya Kawazumi and
Antoine Touzé for the attention they have paid to his work and helpful
discussions. Additionally, he would like to thank the anonymous referee
for his careful reading, comments and corrections.
\end{acknowledgement*}
\tableofcontents{}

\section{Categorical framework\label{sec:Appendix}}

This section recollects the notions of Quillen's bracket construction,
pre-braided monoidal categories and  homogeneous categories for the
convenience of the reader. It takes up the framework of \cite[Section 1]{WahlRandal-Williams}.
For an introduction to braided monoidal categories, we refer to \cite[Section XI]{MacLane1}.
Standardly, a strict monoidal category will be denoted by $\left(\mathfrak{C},\natural,0\right)$,
where $\natural:\mathfrak{C}\times\mathfrak{C}\rightarrow\mathfrak{C}$
is the monoidal structure and $0$ is the monoidal unit. If the category
is braided, we denote by $b_{-,-}^{\mathfrak{C}}$ its braiding. \textbf{We
fix a strict monoidal groupoid $\left(\mathfrak{G},\natural,0\right)$
throughout this section.}

\paragraph{Quillen's bracket construction.}

The following definition is a particular case of a more general construction
of \cite{graysonQuillen}.
\begin{defn}
\cite[Section 1.1]{WahlRandal-Williams}\label{def:defUG} Quillen's
bracket construction on the groupoid $\mathfrak{G}$, denoted by $\mathfrak{UG}$
is the category defined by:

\begin{itemize}
\item Objects: $Obj\left(\mathfrak{UG}\right)=Obj\left(\mathfrak{G}\right)$;
\item Morphisms: for $A$ and $B$  objects of $\mathfrak{G}$, $Hom_{\mathfrak{UG}}\left(A,B\right)=\underset{\mathfrak{G}}{colim}\left[Hom_{\mathfrak{G}}\left(-\natural A,B\right)\right]$.
A morphism from $A$ to $B$ in the category $\mathfrak{UG}$ is an
equivalence class of pairs $\left(X,f\right)$, where $X$ is an object
of $\mathfrak{G}$ and $f:X\natural A\rightarrow B$ is a morphism
of $\mathfrak{G}$; this is denoted by $\left[X,f\right]:A\rightarrow B$.
\item Let $\left[X,f\right]:A\rightarrow B$ and $\left[Y,g\right]:B\rightarrow C$
be  morphisms in the category $\mathfrak{UG}$. Then, the composition
in the category $\mathfrak{UG}$ is defined by $\left[Y,g\right]\circ\left[X,f\right]=\left[Y\natural X,g\circ\left(id_{Y}\natural f\right)\right]$.
\end{itemize}
\end{defn}

It is clear that the unit $0$ of the monoidal structure of the groupoid
$\left(\mathfrak{G},\natural,0\right)$ is an initial object in the
category $\mathfrak{U\mathfrak{G}}$ (see \cite[Proposition 1.8 (i)]{WahlRandal-Williams}). 
\begin{defn}
\label{def:nozerodivisors-}The strict monoidal category $\left(\mathfrak{G},\natural,0\right)$
is said to have no zero divisors if for all objects $A$ and $B$
of $\mathfrak{G}$, $A\natural B\cong0$ if and only if $A\cong B\cong0$.
\end{defn}

\begin{prop}
\cite[Proposition 1.7]{WahlRandal-Williams} Assume that the strict
monoidal groupoid $\left(\mathfrak{G},\natural,0\right)$ has no zero
divisors and that $Aut_{\mathfrak{G}}(0)=\left\{ id_{0}\right\} $.
Then, the groupoid $\mathfrak{G}$ is the maximal subgroupoid of $\mathfrak{U\mathfrak{G}}$.
\end{prop}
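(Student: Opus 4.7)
The plan is to exhibit an isomorphism of categories between $\mathfrak{G}$ and the maximal subgroupoid $\mathscr{G}\mathfrak{r}(\mathfrak{U\mathfrak{G}})$ via the canonical functor $\mathfrak{G}\rightarrow\mathfrak{U\mathfrak{G}}$ sending $f:A\rightarrow B$ to the class $[0,f]$ (after identifying $0\natural A$ with $A$ using strictness). The functor is bijective on objects by Definition \ref{def:defUG}, so the substantive step is to show it is bijective on morphism sets of isomorphisms. To do so, I will prove that every isomorphism in $\mathfrak{U\mathfrak{G}}$ admits a representative of the form $(0,f)$, and that such a representative is unique.

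First, I would tackle existence. Let $[X,f]:A\rightarrow B$ be an isomorphism in $\mathfrak{U\mathfrak{G}}$ with inverse $[Y,g]:B\rightarrow A$. By the composition rule in Definition \ref{def:defUG}, the equality $[Y,g]\circ[X,f]=id_{A}=[0,id_{A}]$ says that the pairs $(Y\natural X,g\circ(id_{Y}\natural f))$ and $(0,id_{A})$ represent the same morphism. Unfolding the colimit equivalence relation on pairs yields an isomorphism $\sigma:Y\natural X\rightarrow 0$ in $\mathfrak{G}$. Here the hypothesis that $(\mathfrak{G},\natural,0)$ has no zero divisors (Definition \ref{def:nozerodivisors-}) enters decisively: it forces $X\cong 0\cong Y$. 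Picking any isomorphism $\tau:X\rightarrow 0$ in $\mathfrak{G}$ and setting $f':=f\circ(\tau^{-1}\natural id_{A}):A=0\natural A\rightarrow B$, the relation $f'\circ(\tau\natural id_{A})=f$ shows that $[X,f]=[0,f']$, so $[X,f]$ is indeed represented by a morphism of $\mathfrak{G}$.

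Next, uniqueness. Suppose $[0,f]=[0,f']$ as morphisms $A\rightarrow B$ in $\mathfrak{U\mathfrak{G}}$. The equivalence relation in the colimit defining $Hom_{\mathfrak{U\mathfrak{G}}}(A,B)$ provides some $\sigma\in Aut_{\mathfrak{G}}(0)$ with $f'\circ(\sigma\natural id_{A})=f$. The second hypothesis $Aut_{\mathfrak{G}}(0)=\{id_{0}\}$ immediately gives $\sigma=id_{0}$, hence $f=f'$. Combined with the previous paragraph, this shows that for any two objects $A$ and $B$, the canonical map $Hom_{\mathfrak{G}}(A,B)\rightarrow Hom_{\mathscr{G}\mathfrak{r}(\mathfrak{U\mathfrak{G}})}(A,B)$, $f\mapsto[0,f]$, is bijective. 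Functoriality is an easy check from the composition formula $[0,g]\circ[0,f]=[0,g\circ f]$, completing the identification.

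The only genuine obstacle is the extraction step $Y\natural X\cong 0\Rightarrow X\cong 0$; this is purely a matter of invoking the no-zero-divisors hypothesis on the right object, and the rest of the argument is bookkeeping of the equivalence relation defining $Hom_{\mathfrak{U\mathfrak{G}}}$. I expect no further complications, and no other hypotheses beyond the two stated are needed.
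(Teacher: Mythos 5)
The paper does not actually prove this statement---it is quoted verbatim from Randal-Williams--Wahl \cite[Proposition 1.7]{WahlRandal-Williams}---but your argument is correct and is essentially the proof given in that reference: invertibility of $\left[X,f\right]$ forces $Y\natural X\cong0$, hence $X\cong0\cong Y$ by the no-zero-divisors hypothesis, while $Aut_{\mathfrak{G}}(0)=\left\{ id_{0}\right\}$ gives injectivity of $f\mapsto\left[0,f\right]$ on hom-sets. I see no gaps.
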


\textbf{Henceforth, we assume that the groupoid $\left(\mathfrak{G},\natural,0\right)$
has no zero divisors and that $Aut_{\mathfrak{G}}(0)=\left\{ id_{0}\right\} $.}
\begin{rem}
\label{rem:canonicalfunctor}Let $X$ be an object of $\mathfrak{G}$.
Let $\phi\in Aut_{\mathfrak{G}}\left(X\right)$. Then, as an element
of $Hom_{\mathfrak{UG}}\left(X,X\right)$, we will abuse the notation
and write $\phi$ for $\left[0,\phi\right]$.
\end{rem}

Finally, we recall the following lemma.
\begin{lem}
\cite[Proposition 2.4]{WahlRandal-Williams}\cite[Lemma 1.8]{soulieLMgeneralized}\label{lem:criterionfamilymorphismsfunctor}
Let $\mathscr{C}$ be a category and $F$ an object of $\mathbf{Fct}\left(\mathfrak{G},\mathscr{C}\right)$.
Assume that for $A,X,Y\in Obj\left(\mathfrak{G}\right)$, there exist
assignments $F\left(\left[X,id_{X\natural A}\right]\right):F\left(A\right)\rightarrow F\left(X\natural A\right)$
such that:
\begin{equation}
F\left(\left[Y,id_{Y\natural X\natural A}\right]\right)\circ F\left(\left[X,id_{X\natural A}\right]\right)=F\left(\left[Y\natural X,id_{Y\natural X\natural A}\right]\right).\label{eq:criterion}
\end{equation}
Then, the assignment $F\left(\left[X,g\right]\right)=F\left(g\right)\circ F\left(\left[X,id_{X\natural A}\right]\right)$
for $\left[X,g\right]\in Hom_{\mathfrak{U}\mathfrak{G}}\left(A,id_{X\natural A}\right)$
defines a functor $F:\mathfrak{U}\mathfrak{G}\rightarrow\mathscr{C}$
if and only if for all $A,X\in Obj\left(\mathfrak{G}\right)$, for
all $g''\in Aut_{\mathfrak{G}}\left(A\right)$ and all $g'\in Aut_{\mathfrak{G}}\left(X\right)$:
\begin{equation}
F\left(\left[X,id_{X\natural A}\right]\right)\circ F\left(g''\right)=F\left(g'\natural g''\right)\circ F\left(\left[X,id_{X\natural A}\right]\right).\label{eq:criterion'}
\end{equation}
\end{lem}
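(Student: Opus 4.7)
The overall strategy exploits the canonical factorization $[X,g] = [0,g] \circ [X, id_{X\natural A}]$ in $\mathfrak{U\mathfrak{G}}$, valid for every morphism $[X,g]:A\to B$ by the composition rule of Definition~\ref{def:defUG} (with $[0,g]$ encoding $g$ via Remark~\ref{rem:canonicalfunctor}). This factorization forces the formula $F([X,g]) = F(g) \circ F([X, id_{X\natural A}])$ in any functorial extension of $F$ to $\mathfrak{U\mathfrak{G}}$, so the content of the lemma is that (\ref{eq:criterion'}) is precisely the obstruction to this formula defining a functor.

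For the necessity of (\ref{eq:criterion'}), assuming $F$ is already a functor on $\mathfrak{U\mathfrak{G}}$, I would compute $[X, id_{X\natural A}] \circ [0,g'']$ directly via Definition~\ref{def:defUG}: it equals the class of $(X, id_X\natural g'')$. For any $g' \in Aut_{\mathfrak{G}}(X)$, the isomorphism $(g')^{-1}:X\to X$ of $\mathfrak{G}$ witnesses the equivalence $(X, id_X\natural g'') \sim (X, g'\natural g'')$ in the colimit defining $Hom_{\mathfrak{U\mathfrak{G}}}$, because $(g'\natural g'') \circ ((g')^{-1}\natural id_A) = id_X\natural g''$. Hence $[X, id_{X\natural A}] \circ [0,g''] = [0, g'\natural g''] \circ [X, id_{X\natural A}]$ in $\mathfrak{U\mathfrak{G}}$, and applying $F$ yields (\ref{eq:criterion'}).

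For the sufficiency, I would define $F([X,g]) := F(g) \circ F([X, id_{X\natural A}])$ and verify three points. (i) Well-definedness on equivalence classes: by the structure of the colimit, it suffices to treat the case where two representatives differ by an automorphism $\phi \in Aut_{\mathfrak{G}}(X)$ acting on the left factor, and (\ref{eq:criterion'}) specialised to $g'' = id_A$ absorbs the resulting $F(\phi\natural id_A)$ term. (ii) Preservation of the identity $[0, id_A]$, which amounts to compatibility of the assignment at $X=0$ with the original functor on $\mathfrak{G}$. (iii) Preservation of composition, which is the main technical obstacle: given composable $[X,g_1]:A\to B$ and $[Y,g_2]:B\to C$, expanding $F([Y\natural X, g_2\circ(id_Y\natural g_1)])$ with the help of hypothesis (\ref{eq:criterion}) and expanding $F([Y,g_2])\circ F([X,g_1])$ by the formula, the desired equality reduces to the commutation
\[
F\bigl([Y, id_{Y\natural B}]\bigr) \circ F(g_1) \;=\; F(id_Y\natural g_1) \circ F\bigl([Y, id_{Y\natural X\natural A}]\bigr),
\]
which is exactly (\ref{eq:criterion'}) applied with $g' = id_Y$ and $g'' = g_1$ (viewing $g_1$ as an automorphism of $X\natural A = B$ in the essentially skeletal settings underlying the applications). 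Step (iii) is where both hypotheses genuinely interact: (\ref{eq:criterion}) reduces the composition to a single stabilization morphism, while (\ref{eq:criterion'}) is exactly what allows this morphism to commute past an isomorphism coming from $\mathfrak{G}$.
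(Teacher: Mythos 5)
The paper does not actually prove this lemma: it is quoted verbatim from \cite[Proposition 2.4]{WahlRandal-Williams} and \cite[Lemma 1.8]{soulieLMgeneralized} with no argument supplied, so there is nothing internal to compare against. Your argument is the standard one from those sources and is essentially correct: the factorization $\left[X,g\right]=g\circ\left[X,id_{X\natural A}\right]$ forces the formula, necessity of (\ref{eq:criterion'}) follows from the computation $\left[X,id_{X\natural A}\right]\circ g''=\left[X,id_{X}\natural g''\right]=\left[X,g'\natural g''\right]=\left(g'\natural g''\right)\circ\left[X,id_{X\natural A}\right]$ exactly as you describe, and for sufficiency the composition axiom reduces, via (\ref{eq:criterion}), to (\ref{eq:criterion'}) applied at the object $X\natural A$ with $g'=id_{Y}$ and $g''=g_{1}$. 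Two small points you gloss over are genuine but harmless here: well-definedness when two representatives live over distinct isomorphic objects $X\neq X'$, and the identity axiom $F\left(\left[0,id_{A}\right]\right)=id_{F\left(A\right)}$, both of which require the (implicit, and satisfied in all the paper's applications) convention that $\mathfrak{G}$ is skeletal and that the assignment at $X=0$ is the identity rather than merely the idempotent forced by (\ref{eq:criterion}).
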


\paragraph{Pre-braided monoidal categories.}

Assuming that the strict monoidal groupoid $\left(\mathfrak{G},\natural,0\right)$
is braided, Quillen's bracket construction $\mathfrak{UG}$ also inherits
a strict monoidal structure (see Proposition \ref{prop:Quillen'sconstructionprebraided}).
Beforehand, we recall the notion of pre-braided category, introduced
by Randal-Williams and Wahl in \cite[Section 1]{WahlRandal-Williams}.
\begin{defn}
\cite[Definition 1.5]{WahlRandal-Williams}\label{def:defprebraided}
Let $\left(\mathfrak{C},\natural,0\right)$ be a strict monoidal category
such that the unit $0$ is initial. We say that the monoidal category
$\left(\mathfrak{C},\natural,0\right)$ is pre-braided if:

\begin{itemize}
\item The maximal subgroupoid $\mathscr{G}\mathfrak{r}\left(\mathfrak{C},\natural,0\right)$
 is a braided monoidal category, where the monoidal structure is induced
by that of $\left(\mathfrak{C},\natural,0\right)$.
\item For all objects $A$ and $B$ of $\mathfrak{C}$, the braiding associated
with the maximal subgroupoid $b_{A,B}^{\mathfrak{C}}:A\natural B\longrightarrow B\natural A$
satisfies:
\begin{equation}
b_{A,B}^{\mathfrak{C}}\circ\left(id_{A}\natural\iota_{B}\right)=\iota_{B}\natural id_{A}:A\longrightarrow B\natural A.\label{eq:defbraid}
\end{equation}
\end{itemize}
\end{defn}

\begin{rem}
A braided monoidal category is always pre-braided but the converse
is false. Indeed, for a pre-braided monoidal category, the opposite
of condition (\ref{eq:defbraid}) (i.e. $b_{A,B}^{\mathfrak{C}}\circ\left(\iota_{B}\natural id_{A}\right)=id_{A}\natural\iota_{B}$)
does not hold generally speaking, whereas this is a necessary property
for a braided monoidal category. For instance, the category $\left(\mathfrak{U\boldsymbol{\beta}},\natural,0\right)$
is pre-braided monoidal but not braided since $b_{1,2}^{\boldsymbol{\beta}}\circ\left(\iota_{1}\natural id{}_{2}\right)\neq id_{2}\natural\iota_{1}$
(see \cite[Remark 5.24]{WahlRandal-Williams} if more details are
required).
\end{rem}

Finally, we give the effect of Quillen's bracket construction over
the strict braided monoidal groupoid $\left(\mathfrak{G},\natural,0\right)$.
\begin{prop}
\cite[Proposition 1.8]{WahlRandal-Williams}\label{prop:Quillen'sconstructionprebraided}
Suppose that the strict monoidal groupoid $\left(\mathfrak{G},\natural,0\right)$
has no zero divisors and that $Aut_{\mathfrak{G}}(0)=\left\{ id_{0}\right\} $.
If the groupoid $\left(\mathfrak{G},\natural,0\right)$ is braided,
then the category $\left(\mathfrak{UG},\natural,0\right)$ is pre-braided
monoidal. If the groupoid $\left(\mathfrak{G},\natural,0\right)$
is symmetric, then the category $\left(\mathfrak{U}\mathfrak{G},\natural,0\right)$
is symmetric monoidal.

The monoidal structure on the category $\left(\mathfrak{UG},\natural,0\right)$
is defined on objects as for $\left(\mathfrak{G},\natural,0\right)$
and defined on morphisms by letting, for $\left[X,f\right]\in Hom_{\mathfrak{UG}}\left(A,B\right)$
and $\left[Y,g\right]\in Hom_{\mathfrak{UG}}\left(C,D\right)$:
\[
\left[X,f\right]\natural\left[Y,g\right]=\left[X\natural Y,\left(f\natural g\right)\circ\left(id_{X}\natural\left(b_{A,Y}^{\mathfrak{G}}\right)^{-1}\natural id_{C}\right)\right].
\]
In particular, the canonical functor $\mathfrak{G}\rightarrow\mathfrak{UG}$
(see Remark \ref{rem:canonicalfunctor}) is monoidal.
\end{prop}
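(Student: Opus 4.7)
The plan is to verify, in sequence, that (i) the given formula defines a well-defined bifunctor $\natural : \mathfrak{UG} \times \mathfrak{UG} \to \mathfrak{UG}$; (ii) this bifunctor equips $\mathfrak{UG}$ with a strict monoidal structure for which the canonical functor $\mathfrak{G} \to \mathfrak{UG}$ (Remark \ref{rem:canonicalfunctor}) is strict monoidal; (iii) the pre-braiding axiom \eqref{eq:defbraid} holds; and (iv) in the symmetric case the resulting structure is symmetric monoidal.

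For (i), well-definedness on equivalence classes reduces, by a short diagram chase, to the naturality of $b^{\mathfrak{G}}$ in both variables. Bifunctoriality is most efficiently handled using Lemma \ref{lem:criterionfamilymorphismsfunctor} applied to the product category $\mathfrak{UG} \times \mathfrak{UG}$: one prescribes the value of the putative bifunctor on the canonical morphisms $([X, id_{X \natural A}], [Y, id_{Y \natural C}])$ and checks the compatibility \eqref{eq:criterion'} with automorphisms, which is immediate from naturality of $b^{\mathfrak{G}}$. The interchange law $([X,f] \circ [Z,h]) \natural ([Y,g] \circ [W,k]) = ([X,f] \natural [Y,g]) \circ ([Z,h] \natural [W,k])$ is then obtained by combining naturality of $b^{\mathfrak{G}}$ in each variable with a single application of the hexagon axiom.

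For (ii), strict associativity on objects is inherited from $\mathfrak{G}$; on morphisms, matching the two bracketings of the formula requires precisely the hexagon identity for $b^{\mathfrak{G}}$. Strict unitality on morphisms follows from $b_{A,0}^{\mathfrak{G}} = b_{0,Y}^{\mathfrak{G}} = id$, which is forced by the unit axioms of a braided monoidal groupoid. Applied to pairs $([0, \phi], [0, \psi])$ coming from $\mathfrak{G}$, the formula collapses to $[0, \phi \natural \psi]$, proving the canonical functor $\mathfrak{G} \to \mathfrak{UG}$ is strict monoidal. For (iii), the preceding proposition identifies the maximal subgroupoid of $\mathfrak{UG}$ with $\mathfrak{G}$, so step (ii) endows $\mathscr{G}\mathfrak{r}(\mathfrak{UG})$ with the braiding $b_{A,B}^{\mathfrak{UG}} = [0, b_{A,B}^{\mathfrak{G}}]$. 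A direct computation from the formula yields $id_A \natural \iota_B = [B, (b_{A,B}^{\mathfrak{G}})^{-1}]$ and $\iota_B \natural id_A = [B, id_{B \natural A}]$; post-composing the former with $b_{A,B}^{\mathfrak{UG}}$ in $\mathfrak{UG}$ telescopes $b_{A,B}^{\mathfrak{G}} \circ (b_{A,B}^{\mathfrak{G}})^{-1}$ to $id_{B \natural A}$ and reproduces $\iota_B \natural id_A$, verifying \eqref{eq:defbraid}. Finally, (iv) is immediate: if $\mathfrak{G}$ is symmetric, then $\mathscr{G}\mathfrak{r}(\mathfrak{UG}) = \mathfrak{G}$ is symmetric, giving $\mathfrak{UG}$ a symmetric monoidal structure.

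The main obstacle is step (i): the insertion of $(b^{\mathfrak{G}})^{-1}$ of specific bidegree in the formula means that both well-definedness and the interchange law require careful bookkeeping of how this insertion commutes past the representatives being composed. Lemma \ref{lem:criterionfamilymorphismsfunctor} substantially simplifies matters by reducing the verification to a single compatibility check on automorphisms.
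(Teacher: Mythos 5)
The paper offers no proof of this statement: it is quoted verbatim from Randal-Williams--Wahl (their Proposition 1.8), so there is no internal argument to compare against. Judged on its own terms, your outline of steps (i)--(iii) is essentially the standard argument and is sound: well-definedness and bifunctoriality via Lemma \ref{lem:criterionfamilymorphismsfunctor} applied to $\mathfrak{G}\times\mathfrak{G}$ (using $\mathfrak{U}(\mathfrak{G}\times\mathfrak{G})\simeq\mathfrak{UG}\times\mathfrak{UG}$), associativity via the hexagon identities, and the computations $id_A\natural\iota_B=\left[B,(b_{A,B}^{\mathfrak{G}})^{-1}\right]$ and $\iota_B\natural id_A=\left[B,id_{B\natural A}\right]$ giving \eqref{eq:defbraid} are all correct.

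Step (iv), however, contains a genuine gap. That $\mathscr{G}\mathfrak{r}(\mathfrak{UG})=\mathfrak{G}$ is symmetric only tells you that $\mathfrak{UG}$ is pre-braided with symmetric underlying braiding; it does not make $\mathfrak{UG}$ a symmetric monoidal \emph{category}. For that you must show the candidate symmetry $b_{A,C}^{\mathfrak{UG}}=\left[0,b_{A,C}^{\mathfrak{G}}\right]$ is natural with respect to \emph{all} morphisms of $\mathfrak{UG}$, i.e.
\[
b_{B,D}^{\mathfrak{UG}}\circ\left(\left[X,f\right]\natural\left[Y,g\right]\right)=\left(\left[Y,g\right]\natural\left[X,f\right]\right)\circ b_{A,C}^{\mathfrak{UG}},
\]
and it is precisely in this computation (after inserting an isomorphism $b_{X,Y}^{\mathfrak{G}}:X\natural Y\rightarrow Y\natural X$ to compare representatives) that the relation $b_{Y,X}^{\mathfrak{G}}=\left(b_{X,Y}^{\mathfrak{G}}\right)^{-1}$ is indispensable. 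If the symmetric case really were "immediate" from the structure of the maximal subgroupoid, the same reasoning would show that $\mathfrak{UG}$ is braided monoidal whenever $\mathfrak{G}$ is braided, which is false in general --- that failure is exactly why the notion of pre-braided category exists. You should carry out this naturality check explicitly and flag where the symmetry hypothesis enters.
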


\paragraph{Homogeneous categories.}

The notion of homogeneous category is introduced by Randal-Williams
and Wahl in \cite[Section 1]{WahlRandal-Williams}, inspired by the
set-up of Djament and Vespa in \cite[Section 1.2]{DV1}. With two
additional assumptions, Quillen's bracket construction $\mathfrak{UG}$
from a strict monoidal groupoid $\left(\mathfrak{G},\natural,0\right)$
is endowed with an homogeneous category structure. This type of category
is very useful to deal with homological stability with twisted coefficients
questions (see \cite{WahlRandal-Williams}) or to work on the stable
homology with twisted coefficient (see \cite{DV1}, \cite{DV2} and
Section \ref{sec:A-general-result}).

Let $\left(\mathfrak{C},\natural,0\right)$ be a small strict monoidal
category in which the unit $0$ is also initial. For all objects $A$
and $B$ of $\mathfrak{C}$, we consider the morphism $\iota_{A}\natural id_{B}:0\natural B\longrightarrow A\natural B$
and a set of morphisms:
\[
Fix_{A}\left(B\right)=\left\{ \phi\in Aut\left(A\natural B\right)\mid\phi\circ\left(\iota_{A}\natural id_{B}\right)=\iota_{A}\natural id_{B}\right\} .
\]
Since $\left(\mathfrak{C},\natural,0\right)$ is assumed to be small,
$Hom_{\mathfrak{C}}\left(A,B\right)$ is a set and $Aut_{\mathfrak{C}}(B)$
defines a group (with composition of morphisms as the group product).
The group $Aut_{\mathfrak{C}}(B)$ acts by post-composition on $Hom_{\mathfrak{C}}\left(A,B\right)$:
\[
\begin{array}{ccc}
Aut_{\mathfrak{C}}(B)\times Hom_{\mathfrak{C}}(A,B) & \longrightarrow & Hom_{\mathfrak{C}}(A,B).\\
\left(\phi,f\right) & \longmapsto & \phi\circ f
\end{array}
\]

\begin{defn}
\label{def:homogenousandlocallyhomogenous}Let $\left(\mathfrak{C},\natural,0\right)$
be a small strict monoidal category where the unit $0$ is initial.
We consider the following axioms:

\begin{itemize}
\item $\mathbf{\left(H1\right)}$: for all objects $A$ and $B$ of the
category $\mathfrak{C}$, the action by post-composition of $Aut_{\mathfrak{C}}(B)$
on $Hom_{\mathfrak{C}}\left(A,B\right)$ is transitive.
\item $\mathbf{\left(H2\right)}$: for all objects $A$ and $B$ of the
category $\mathfrak{C}$, the map $Aut_{\mathfrak{C}}\left(A\right)\rightarrow Aut_{\mathfrak{C}}\left(A\natural B\right)$
sending $f\in Aut_{\mathfrak{C}}\left(A\right)$ to $f\natural id_{B}$
is injective with image $Fix_{A}\left(B\right)$.
\end{itemize}
The category $\left(\mathfrak{C},\natural,0\right)$ is homogeneous
if it satisfies the axioms $\mathbf{\left(H1\right)}$ and $\mathbf{\left(H2\right)}$.
\end{defn}

As a consequence of the axioms $\mathbf{\left(H1\right)}$ and $\mathbf{\left(H2\right)}$,
we deduce that:
\begin{lem}
\label{rem:Homogenousgroupset}If $\left(\mathfrak{C},\natural,0\right)$
is a homogeneous category, then $Hom_{\mathfrak{C}}\left(B,A\natural B\right)\cong Aut_{\mathfrak{C}}\left(A\natural B\right)/Aut_{\mathfrak{C}}\left(A\right)$
for all objects $A$ and $B$ and where $Aut_{\mathfrak{C}}\left(A\right)$
acts on $Aut_{\mathfrak{C}}\left(A\natural B\right)$ by precomposition.
\end{lem}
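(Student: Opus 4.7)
The plan is to identify the isomorphism as an instance of the orbit--stabilizer correspondence applied to the natural action of $Aut_{\mathfrak{C}}(A\natural B)$ on $Hom_{\mathfrak{C}}(B,A\natural B)$ by post-composition. So the first thing I would do is single out the distinguished element $\iota_{A}\natural id_{B}\in Hom_{\mathfrak{C}}(B,A\natural B)$ (which makes sense because the unit $0$ is initial in $\mathfrak{C}$, so $0\natural B = B$) and use this element as the base point of the action.

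Next I would carry out the two axiomatic reductions separately. Using axiom $\mathbf{(H1)}$, the post-composition action is transitive, so the orbit of $\iota_{A}\natural id_{B}$ is all of $Hom_{\mathfrak{C}}(B,A\natural B)$, and the map
\[
Aut_{\mathfrak{C}}(A\natural B)\longrightarrow Hom_{\mathfrak{C}}(B,A\natural B),\qquad \phi\longmapsto \phi\circ(\iota_{A}\natural id_{B})
\]
is surjective. The stabilizer of $\iota_{A}\natural id_{B}$ under this action is, directly from the definition, the subset $Fix_{A}(B)\subseteq Aut_{\mathfrak{C}}(A\natural B)$. Then axiom $\mathbf{(H2)}$ identifies $Fix_{A}(B)$ with the image of the injection $Aut_{\mathfrak{C}}(A)\hookrightarrow Aut_{\mathfrak{C}}(A\natural B)$, $f\mapsto f\natural id_{B}$, so that we may regard $Aut_{\mathfrak{C}}(A)$ as a subgroup of $Aut_{\mathfrak{C}}(A\natural B)$.

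Finally, I would check that two elements $\phi,\phi'\in Aut_{\mathfrak{C}}(A\natural B)$ give the same morphism $\phi\circ(\iota_{A}\natural id_{B}) = \phi'\circ(\iota_{A}\natural id_{B})$ if and only if $\phi^{-1}\phi'\in Fix_{A}(B)$, i.e. if and only if $\phi' = \phi\circ(f\natural id_{B})$ for some $f\in Aut_{\mathfrak{C}}(A)$. This says precisely that $\phi$ and $\phi'$ lie in the same orbit of the right action of $Aut_{\mathfrak{C}}(A)$ on $Aut_{\mathfrak{C}}(A\natural B)$ by precomposition through the $\mathbf{(H2)}$-embedding, yielding the claimed bijection $Hom_{\mathfrak{C}}(B,A\natural B)\cong Aut_{\mathfrak{C}}(A\natural B)/Aut_{\mathfrak{C}}(A)$.

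There is no serious obstacle: the content of the statement is exactly the orbit--stabilizer theorem for the $\mathbf{(H1)}$-transitive action, together with the $\mathbf{(H2)}$-identification of the point stabilizer. The only points that require a small amount of care are verifying that $\iota_{A}\natural id_{B}$ is well-defined as a morphism $B\to A\natural B$ (which uses that $0$ is initial and strict monoidality), and recording that the resulting quotient is a quotient of sets by a right action rather than a group quotient, since $Aut_{\mathfrak{C}}(A)$ need not be normal in $Aut_{\mathfrak{C}}(A\natural B)$.
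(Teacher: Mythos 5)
Your proof is correct and is exactly the intended argument: the paper states this lemma without proof as an immediate consequence of $\mathbf{\left(H1\right)}$ and $\mathbf{\left(H2\right)}$, and the orbit--stabilizer identification you spell out (transitivity of the post-composition action from $\mathbf{\left(H1\right)}$, stabilizer of $\iota_{A}\natural id_{B}$ equal to $Fix_{A}\left(B\right)$ by definition, and the $\mathbf{\left(H2\right)}$ identification of $Fix_{A}\left(B\right)$ with $Aut_{\mathfrak{C}}\left(A\right)$) is precisely what is being left to the reader. Your closing remarks on strictness giving $0\natural B=B$ and on the quotient being a set-quotient by a right action are accurate and appropriately careful.
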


We now give the two additional properties so that if a strict monoidal
groupoid $\left(\mathfrak{G},\natural,0\right)$ satisfy them, then
Quillen's bracket construction $\mathfrak{UG}$ is homogeneous.
\begin{defn}
\label{def:conditionscancelandinject}Let $\left(\mathfrak{C},\natural,0\right)$
be a strict monoidal category. We define two assumptions.

\begin{itemize}
\item $\mathbf{\left(C\right)}$: for all objects $A$, $B$ and $C$ of
$\mathfrak{C}$, if $A\natural C\cong B\natural C$ then $A\cong B$.
\item $\mathbf{\left(I\right)}$: for all objects $A$, $B$ of $\mathfrak{C}$,
the morphism $Aut_{\mathfrak{C}}\left(A\right)\rightarrow Aut_{\mathfrak{C}}\left(A\natural B\right)$
sending $f\in Aut_{\mathfrak{C}}\left(A\right)$ to $f\natural id_{B}$
is injective.
\end{itemize}
\end{defn}

\begin{thm}
\cite[Theorem 1.10]{WahlRandal-Williams}\label{thm:consequencecondcancalandinject}
Let $\left(\mathfrak{G},\natural,0\right)$ be a braided monoidal
groupoid with no zero divisors. If the groupoid $\mathfrak{G}$ satisfies
$\mathbf{\left(C\right)}$ and $\mathbf{\left(I\right)}$, then $\mathfrak{UG}$
is homogeneous.
\end{thm}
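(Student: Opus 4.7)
The plan is to verify directly the axioms $(H1)$ and $(H2)$ of Definition \ref{def:homogenousandlocallyhomogenous} for the strict monoidal category $\left(\mathfrak{U}\mathfrak{G},\natural,0\right)$, relying on the description of its monoidal structure given by Proposition \ref{prop:Quillen'sconstructionprebraided}. Since $\mathfrak{G}$ has no zero divisors and $Aut_{\mathfrak{G}}(0)=\{id_{0}\}$, the groupoid $\mathfrak{G}$ coincides with the maximal subgroupoid of $\mathfrak{U}\mathfrak{G}$; in particular $Aut_{\mathfrak{U}\mathfrak{G}}(C)=Aut_{\mathfrak{G}}(C)$ for every object $C$, and one checks easily that under this identification $\phi\natural id_{B}$ computed in $\mathfrak{U}\mathfrak{G}$ agrees with $\phi\natural id_{B}$ computed in $\mathfrak{G}$ (the only braiding appearing in the formula is $b_{A,0}^{\mathfrak{G}}$, which is the identity). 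The substantive inputs beyond this bookkeeping will be exactly the two assumptions $(C)$ and $(I)$.

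For $(H1)$, given parallel morphisms $[X,f],[X',f']\in Hom_{\mathfrak{U}\mathfrak{G}}(A,B)$, the isomorphisms $f:X\natural A\to B$ and $f':X'\natural A\to B$ in $\mathfrak{G}$ force $X\natural A\cong X'\natural A$, so assumption $(C)$ produces an isomorphism $\alpha:X\to X'$ in $\mathfrak{G}$. I would then set $\phi:=f'\circ(\alpha\natural id_{A})\circ f^{-1}\in Aut_{\mathfrak{G}}(B)$ and verify by a direct composition in $\mathfrak{U}\mathfrak{G}$ that $\phi\circ[X,f]=[X,f'\circ(\alpha\natural id_{A})]$, which represents the same equivalence class as $[X',f']$ via the morphism $\alpha$. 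This yields the transitivity required by $(H1)$.

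For $(H2)$, injectivity of the map $Aut_{\mathfrak{G}}(A)\to Aut_{\mathfrak{G}}(A\natural B)$ sending $\phi$ to $\phi\natural id_{B}$ is exactly assumption $(I)$. To identify the image, I would first compute $\iota_{A}\natural id_{B}$ in $\mathfrak{U}\mathfrak{G}$: applying the monoidal formula of Proposition \ref{prop:Quillen'sconstructionprebraided} to $\iota_{A}=[A,id_{A}]$ and $id_{B}=[0,id_{B}]$ yields $[A,id_{A\natural B}]:B\to A\natural B$. A direct composition then gives $(\phi\natural id_{B})\circ(\iota_{A}\natural id_{B})=[A,\phi\natural id_{B}]$, which equals $[A,id_{A\natural B}]$ by the equivalence relation on pairs (with $\phi$ itself as witnessing isomorphism), showing that the image sits inside $Fix_{A}(B)$. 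Conversely, $\psi\in Fix_{A}(B)$ translates to the equality $[A,\psi]=[A,id_{A\natural B}]$, which unpacks to exactly $\psi=\alpha\natural id_{B}$ for some $\alpha\in Aut_{\mathfrak{G}}(A)$, completing the characterization of the image.

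The argument is essentially a careful unpacking of the definitions of morphism, composition and monoidal product in $\mathfrak{U}\mathfrak{G}$, together with one application each of the cancellation property $(C)$ and the injectivity property $(I)$. The main step requiring attention is the identification $\iota_{A}\natural id_{B}=[A,id_{A\natural B}]$ and the observation that the braiding disappears in all the relevant computations because one tensor factor is always an identity on $0$; once this is in place, both $(H1)$ and $(H2)$ follow by formal manipulation of the colimit equivalence relation defining $Hom_{\mathfrak{U}\mathfrak{G}}$.
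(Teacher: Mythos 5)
Your proposal is correct, and it is essentially the standard argument: the paper does not reproduce a proof here but cites \cite[Theorem 1.10]{WahlRandal-Williams}, whose proof proceeds exactly as you do, by unwinding the colimit description of $Hom_{\mathfrak{UG}}$ to verify $\mathbf{\left(H1\right)}$ from $\mathbf{\left(C\right)}$ and $\mathbf{\left(H2\right)}$ from $\mathbf{\left(I\right)}$. Your two key computations --- that $\iota_{A}\natural id_{B}=\left[A,id_{A\natural B}\right]$ because the only braiding occurring is $b_{-,0}^{\mathfrak{G}}=id$, and that $\left[A,\psi\right]=\left[A,id_{A\natural B}\right]$ unpacks to $\psi=\alpha\natural id_{B}$ for some $\alpha\in Aut_{\mathfrak{G}}\left(A\right)$ --- are exactly the points on which the reference's proof turns, so no gap remains.
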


\section{Twisted stable homologies of semidirect products\label{sec:sssp}}

This section introduces a general method to compute the stable homology
with twisted coefficients using semidirect product structures arising
naturally from the families of mapping class groups. We first establish
the general result of Corollary \ref{cor:bigcor} for the homology
of semidirect products with twisted coefficients. These results are
finally applied in Section \ref{subsec:Applications} to compute explicitly
some homology groups with twisted coefficients for mapping class groups
of orientable surfaces and automorphisms of free groups with boundaries.
Beforehand, we take this opportunity to introduce the following terminology:
\begin{defn}
\label{def:A-family-of group}A family of groups is a functor $\mathbf{G}_{-}:\left(\mathbb{N},\leq\right)\longrightarrow\mathfrak{Gr}$
such that for all natural numbers $n$, $\mathbf{G}_{-}\left(\gamma{}_{n}\right):\mathbf{G}_{n}\hookrightarrow\mathbf{G}_{n+1}$
is an injective group morphism.
\end{defn}

\subsection{A general result for the homology of semidirect products\label{subsec:frameworksemidirect}}

First, we present some properties for the homology with twisted coefficients
for a semidirect product and prove the general statement of Corollary
\ref{cor:bigcor}.

Let $\mathcal{Q}$ be a groupoid with objects indexed by the natural
numbers. An object of $\mathcal{Q}$ is thus denoted by $\underline{n}$,
where $n$ is its corresponding indexing natural number. We denote
by $Aut_{\mathcal{Q}}\left(\underline{n}\right)=Q_{n}$ the automorphism
groups and assume that there are no morphisms between distinct objects.
We assume that there exists a family of groups $K_{-}:\left(\mathbb{N},\leq\right)\rightarrow\mathfrak{Gr}$
and a functor $\mathcal{A}_{\mathcal{Q}}:\mathcal{Q}\rightarrow\mathfrak{Gr}$
such that $K_{n}$ is any free goup and $\mathcal{A}_{\mathcal{Q}}\left(\underline{n}\right)=K{}_{n}$
for all natural numbers $n$. We denote by $\mathcal{A}_{\mathcal{Q},n}:Q_{n}\rightarrow Aut_{\mathfrak{Gr}}\left(K_{n}\right)$
the group morphism induced by the functor $\mathcal{A}_{\mathcal{Q}}$
for each natural number $n$. Hence, we form the split short exact
sequence:
\begin{equation}
\xymatrix{1\ar@{->}[r] & K_{n}\ar@{->}[r]^{k_{n}\,\,\,\,\,\,\,\,\,\,\,} & K_{n}\underset{\mathcal{A}_{\mathcal{Q},n}}{\rtimes}Q_{n}\ar@{->}[r]^{\,\,\,\,\,\,\,\,\,\,q_{n}} & Q_{n}\ar@{->}[r] & 1}
\label{eq:semidirect}
\end{equation}
and we denote by $s_{n}:Q_{n}\rightarrow K_{n}\underset{\mathcal{A}_{\mathcal{Q},n}}{\rtimes}Q_{n}$
the canonical splitting of $q_{n}$. For all natural numbers $n$,
we fix $M_{n}$ a $R\left[K_{n}\underset{\mathcal{A}_{\mathcal{Q},n}}{\rtimes}Q_{n}\right]$-module.
We abuse the notation and write $M_{n}$ for the restriction of $M_{n}$
from $K_{n}\underset{\mathcal{A}_{\mathcal{Q},n}}{\rtimes}Q_{n}$
to $K_{n}$. Then we can construct the following long exact sequcence
in homology, analogous to the Gysin sequence for homology:
\begin{prop}
\label{thm:bigthm3}For $M_{n}$ an $R\left[K_{n}\underset{\mathcal{A}_{\mathcal{Q},n}}{\rtimes}Q_{n}\right]$-module,
the short exact sequence (\ref{eq:semidirect}) induces a long exact
sequence:
\begin{equation}
\xymatrix{\cdots\ar@{->}[r] & H_{*+1}\left(Q_{n},H_{0}\left(K_{n},M_{n}\right)\right)\ar@{->}[d]^{d_{*+1,0}^{2}}\\
 & H_{*-1}\left(Q_{n},H_{1}\left(K_{n},M_{n}\right)\right)\ar@{->}[r]^{\,\,\,\,\,\,\varphi_{*}} & H_{*}\left(K_{n}\underset{\mathcal{A}_{\mathcal{Q},n}}{\rtimes}Q_{n},M_{n}\right)\ar@{->}[r]^{\psi_{*}} & H_{*}\left(Q_{n},H_{0}\left(K_{n},M_{n}\right)\right)\ar@{->}[d]^{d_{*,0}^{2}}\\
 &  &  & H_{*-2}\left(Q_{n},H_{1}\left(K_{n},M_{n}\right)\right)\ar@{->}[r] & \cdots
}
\label{eq:les}
\end{equation}
where $\left\{ d_{p,q}^{2}\right\} _{p,q\in\mathbb{N}}$ denote the
differentials of the second page of the Lyndon-Hochschild-Serre spectral
sequence associated with  the short exact sequence (\ref{eq:semidirect}).
\end{prop}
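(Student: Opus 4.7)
The plan is to apply the Lyndon-Hochschild-Serre spectral sequence to the split short exact sequence (\ref{eq:semidirect}) and use the fact that the kernel $K_n$ is a free group to show that the spectral sequence collapses onto two rows, which then forces the existence of a long exact sequence of the desired form.

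First, I would recall that the Lyndon-Hochschild-Serre spectral sequence for (\ref{eq:semidirect}) with coefficients in the $R[K_n \rtimes Q_n]$-module $M_n$ has the form
\[
E^2_{p,q} = H_p\left(Q_n, H_q\left(K_n, M_n\right)\right) \Longrightarrow H_{p+q}\left(K_n \underset{\mathcal{A}_{\mathcal{Q},n}}{\rtimes} Q_n, M_n\right),
\]
where the $Q_n$-action on $H_q(K_n, M_n)$ is induced by $\mathcal{A}_{\mathcal{Q},n}$ on $K_n$ and by the restriction of the action along the splitting $s_n$ on $M_n$.

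Next, I would invoke that $K_n$ is a free group, hence has cohomological dimension at most $1$, so $H_q(K_n, M_n) = 0$ for every $q \geq 2$ and every coefficient module. Consequently, the $E^2$-page is concentrated in the two rows $q = 0$ and $q = 1$. The differential $d^r$ has bidegree $(-r, r-1)$, so for $r \geq 3$ it lands in rows of index $\geq 2$ and must vanish; similarly $d^2_{p,1}: E^2_{p,1} \to E^2_{p-2,2}$ vanishes automatically. The only possibly non-trivial differentials are therefore the $d^2_{p,0}: H_p(Q_n, H_0(K_n, M_n)) \to H_{p-2}(Q_n, H_1(K_n, M_n))$, and $E^\infty = E^3$.

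Once the spectral sequence is reduced to two rows, it is standard that the abutment filtration at total degree $n$ produces short exact sequences
\[
0 \to E^\infty_{n-1,1} \to H_n\left(K_n \underset{\mathcal{A}_{\mathcal{Q},n}}{\rtimes} Q_n, M_n\right) \to E^\infty_{n,0} \to 0,
\]
where $E^\infty_{n,0} = \ker\left(d^2_{n,0}\right)$ and $E^\infty_{n-1,1} = \mathrm{coker}\left(d^2_{n+1,0}\right)$. Splicing these short exact sequences together using the $d^2$ differentials as the connecting homomorphisms yields the long exact sequence (\ref{eq:les}), where the morphism $\psi_*$ is induced by the edge map onto $E^\infty_{*,0} \hookrightarrow E^2_{*,0}$, and $\varphi_*$ is induced by the quotient $E^2_{*-1,1} \twoheadrightarrow E^\infty_{*-1,1}$ and the inclusion of the kernel of $\psi_*$.

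The main obstacle is essentially bookkeeping: verifying that the connecting morphisms one obtains from the spectral sequence filtration coincide (up to the usual sign conventions) with the $d^2$ differentials, and that the long exact sequence one produces is compatible with the edge-map descriptions of $\varphi_*$ and $\psi_*$. This is a general feature of any first-quadrant spectral sequence supported on two adjacent rows, and no special property of the semidirect product beyond the freeness of $K_n$ is needed.
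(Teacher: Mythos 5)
Your proposal is correct and follows exactly the paper's argument: apply the Lyndon--Hochschild--Serre spectral sequence to the split extension, use the freeness of $K_{n}$ to kill all rows $q\geq2$, and invoke the standard two-row splicing to obtain the long exact sequence with $\varphi_{*}$ and $\psi_{*}$ as the edge maps. The paper states this more tersely ("a classical consequence of the fact that the spectral sequence has only two rows"), while you spell out the bookkeeping; there is no substantive difference.
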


\begin{proof}
Applying the Lyndon-Hochschild-Serre spectral sequence to the short
exact sequence (\ref{eq:semidirect}), we obtain the following convergent
first quadrant spectral sequence:
\begin{equation}
E_{pq}^{2}:H_{p}\left(Q_{n},H_{q}\left(K_{n},M_{n}\right)\right)\Longrightarrow H_{p+q}\left(K_{n}\underset{\mathcal{A}_{\mathcal{Q},n}}{\rtimes}Q_{n},M_{n}\right).\label{eq:ssGnHn}
\end{equation}
Since $K_{n}$ is a free group, $H_{q}\left(K_{n},M_{n}\right)=0$
for $q\geq2$. The result is a classical consequence of the fact that
the spectral sequence (\ref{eq:ssGnHn}) has only two rows. In particular,
the map $\varphi_{*}$ is defined by the composition:
\[
H_{*-1}\left(Q_{n},H_{1}\left(K_{n},M_{n}\right)\right)\twoheadrightarrow H_{*-1}\left(Q_{n},H_{1}\left(K_{n},M_{n}\right)\right)/\textrm{Im}\left(d_{*+1,0}^{2}\right)\hookrightarrow H_{*}\left(K_{n}\underset{\mathcal{A}_{\mathcal{Q},n}}{\rtimes}Q_{n},M_{n}\right);
\]
the map $\psi_{*}$ is the coinflation map $\textrm{Coinf}_{K_{n}\underset{\mathcal{A}_{\mathcal{Q},n}}{\rtimes}Q_{n}}^{Q_{n}}\left(M_{n}\right)$,
induced by the composition:
\[
H_{*}\left(K_{n}\underset{\mathcal{A}_{\mathcal{Q},n}}{\rtimes}Q_{n},M_{n}\right)\twoheadrightarrow Ker\left(d_{*,0}^{2}\right)\hookrightarrow H_{*}\left(Q_{n},H_{0}\left(K_{n},M_{n}\right)\right).
\]
\end{proof}
\begin{cor}
\label{cor:bigcor} Let $n$ be a natural number. Assume that the
free group $K_{n}$ acts trivially on the $R$-module $M_{n}$. Then,
for all natural numbers $q\geq1$:
\begin{equation}
H_{q-1}\left(Q_{n},H_{1}\left(K_{n},R\right)\underset{R}{\otimes}M_{n}\right)\oplus H_{q}\left(Q_{n},M_{n}\right)\cong H_{q}\left(K_{n}\underset{\mathcal{A}_{\mathcal{Q},n}}{\rtimes}Q_{n},M_{n}\right).\label{eq:splitsslhS}
\end{equation}
\end{cor}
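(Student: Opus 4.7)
The plan is to derive the splitting directly from the long exact sequence \eqref{eq:les} of Proposition \ref{thm:bigthm3}, by exploiting the fact that the short exact sequence \eqref{eq:semidirect} is split. The key observation is that the splitting $s_{n}:Q_{n}\rightarrow K_{n}\underset{\mathcal{A}_{\mathcal{Q},n}}{\rtimes}Q_{n}$ yields, at the level of homology, a section of the coinflation map $\psi_{*}$. Indeed, the hypothesis that $K_{n}$ acts trivially on $M_{n}$ allows one to identify $H_{0}\left(K_{n},M_{n}\right)$ with $M_{n}$ as a $Q_{n}$-module, $Q_{n}$ acting via the restriction along $s_{n}$; then the identity $q_{n}\circ s_{n}=id_{Q_{n}}$ lifts to the identity $\psi_{*}\circ\left(s_{n}\right)_{*}=id_{H_{*}\left(Q_{n},M_{n}\right)}$.

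From this section property, $\psi_{q}$ is surjective for every $q\geq 1$, which forces every second-page differential $d_{q,0}^{2}$ appearing in \eqref{eq:les} to vanish. The long exact sequence thus degenerates into a family of short exact sequences
\[
0\rightarrow H_{q-1}\left(Q_{n},H_{1}\left(K_{n},M_{n}\right)\right)\xrightarrow{\varphi_{q}}H_{q}\left(K_{n}\underset{\mathcal{A}_{\mathcal{Q},n}}{\rtimes}Q_{n},M_{n}\right)\xrightarrow{\psi_{q}}H_{q}\left(Q_{n},M_{n}\right)\rightarrow 0,
\]
each of which is split by $\left(s_{n}\right)_{*}$.

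It then remains to identify $H_{1}\left(K_{n},M_{n}\right)$. Since $K_{n}$ is a free group acting trivially on $M_{n}$, a length-one free resolution of the trivial $R\left[K_{n}\right]$-module $R$ produces $H_{1}\left(K_{n},M_{n}\right)\cong H_{1}\left(K_{n},R\right)\underset{R}{\otimes}M_{n}$ as $Q_{n}$-modules, with $Q_{n}$ acting diagonally (via $\mathcal{A}_{\mathcal{Q},n}$ on the first factor and via restriction along $s_{n}$ on the second). The main obstacle, though a mild one, is to check that the $Q_{n}$-module structures coming from the Lyndon--Hochschild--Serre machinery and from this resolution-theoretic description coincide; this reduces to the naturality of the LHS spectral sequence with respect to morphisms of group extensions, applied to the self-map of \eqref{eq:semidirect} induced by $s_{n}\circ q_{n}$.
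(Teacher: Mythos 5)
Your proposal is correct and follows essentially the same route as the paper: identify $H_{0}\left(K_{n},M_{n}\right)\cong M_{n}$ and $H_{1}\left(K_{n},M_{n}\right)\cong H_{1}\left(K_{n},R\right)\underset{R}{\otimes}M_{n}$ using the trivial $K_{n}$-action, observe that $\psi_{*}$ is then the map induced by $q_{n}$ so that $\left(s_{n}\right)_{*}$ provides a section, deduce $d_{*,0}^{2}=0$, and split the resulting short exact sequences coming from the two-row Lyndon--Hochschild--Serre spectral sequence. Your closing remark about matching the two $Q_{n}$-module structures on $H_{1}\left(K_{n},M_{n}\right)$ is a fine point the paper passes over silently, but it does not change the argument.
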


\begin{proof}
As $M_{n}$ is a trivial $K_{n}$-module:
\[
H_{1}\left(K_{n},M_{n}\right)\cong H_{1}\left(K_{n},R\right)\underset{R}{\otimes}M_{n}\,\textrm{ and }\,H_{0}\left(K_{n},M_{n}\right)\cong M_{n},
\]
and the coinflation map $\psi_{*}=\textrm{Coinf}_{K_{n}\underset{\mathcal{A}_{\mathcal{Q},n}}{\rtimes}Q_{n}}^{Q_{n}}\left(M_{n}\right)$
is equal to the corestriction map $\textrm{Cores}_{K_{n}\underset{\mathcal{A}_{\mathcal{Q},n}}{\rtimes}Q_{n}}^{Q_{n}}\left(M_{n}\right)$.
Hence, denoting by $H_{*}\left(q_{n},M_{n}\right)$ the map induced
in homology by $q_{n}:K_{n}\underset{\mathcal{A}_{\mathcal{Q},n}}{\rtimes}Q_{n}\rightarrow Q_{n}$,
we deduce that $\psi_{*}=H_{*}\left(q_{n},M_{n}\right)$. By the functoriality
of group homology, the splitting $s_{n}:Q_{n}\rightarrow K_{n}\underset{\mathcal{A}_{\mathcal{Q},n}}{\rtimes}Q_{n}$
of $q_{n}$ induces a splitting in homology $H_{*}\left(s_{n},M_{n}\right)$
of $H_{*}\left(q_{n},M_{n}\right)$. Hence, $H_{*}\left(p_{n},M_{n}\right)$
is an epimorphism and a fortiori $Ker\left(d_{*,0}^{2}\right)\cong H_{*}\left(Q_{n},M_{n}\right)$.
Therefore, $d_{*,0}^{2}=0$ and the exact sequence (\ref{eq:les})
gives a split short exact sequence of abelian groups for every $q\geq1$:
\begin{equation}
\xymatrix{1\ar@{->}[r] & H_{q-1}\left(Q_{n},H_{1}\left(K_{n},R\right)\underset{R}{\otimes}M_{n}\right)\ar@{->}[r]^{\,\,\,\,\,\,\,\,\,\,\,\varphi_{q}} & H_{q}\left(K_{n}\underset{\mathcal{A}_{\mathcal{Q},n}}{\rtimes}Q_{n},M_{n}\right)\ar@{->}[rr]^{\,\,\,\,\,\,\,\,\,\,H_{q}\left(q_{n},M_{n}\right)} &  & H_{q}\left(Q_{n},M_{n}\right)\ar@{->}[r] & 1.}
\label{eq:profsplisslhS}
\end{equation}
\end{proof}

\subsection{Properties of the twisted coefficients}

Our aim here is to study the twisted coefficients $H_{1}\left(K_{n},R\right)\underset{R}{\otimes}M_{n}$
appearing in Corollary \ref{cor:bigcor} so as to prove Lemma \ref{lem:H1tensorm}.
This last result will be useful to prove Theorem \ref{thm:resultAutFnwithbound}.
We now make the following further assumptions that:

\begin{itemize}
\item the groupoid $\mathcal{Q}$ is a braided strict monoidal category
(we denote by $\left(\mathcal{Q},\natural,0\right)$ the monoidal
structure);
\item there exists a free group $K$ such that $K_{n}\cong K^{*n}$ and
that that $K_{-}\left(\gamma_{n}\right)=\iota_{K}*id_{K_{n}}$ (where
$\gamma{}_{n}$ denotes the unique element of $Aut_{\left(\mathbb{N},\leq\right)}\left(n,n+1\right)$)
for all natural numbers $n$;
\item the functor $\mathcal{A}_{\mathcal{Q}}$ takes values in the subcategory
$\mathfrak{gr}\subset\mathfrak{Gr}$ and defines a strict monoidal
functor $\left(\mathcal{Q},\natural,0\right)\rightarrow\left(\mathfrak{gr},*,0\right)$.
\end{itemize}
These assumptions allow to define the functor $K_{-}$ on the category
$\mathfrak{U}\mathcal{Q}$:
\begin{lem}
\label{lem:ExtensionfunctorUQ}Assigning $\mathcal{A}_{\mathcal{Q}}\left(\left[\underline{1},id_{\underline{n+1}}\right]\right)=K_{-}\left(\gamma_{n}\right)$
for all natural numbers $n$, we define a functor $\mathcal{A}_{\mathcal{Q}}:\mathfrak{U}\mathcal{Q}\rightarrow\mathfrak{gr}$.
\end{lem}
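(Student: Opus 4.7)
The natural strategy is to invoke Lemma \ref{lem:criterionfamilymorphismsfunctor} applied to the functor $\mathcal{A}_{\mathcal{Q}}: \mathcal{Q} \to \mathfrak{gr}$, and to extend the prescribed assignment on $[\underline{1}, id_{\underline{n+1}}]$ to every generator $[\underline{m}, id_{\underline{m+n}}]$ of $\mathfrak{U}\mathcal{Q}$ in a compatible way. The natural (and essentially forced) choice is
\[
\mathcal{A}_{\mathcal{Q}}\bigl(\bigl[\underline{m}, id_{\underline{m+n}}\bigr]\bigr) \;=\; K_{-}(\gamma_{n,m+n}) \;=\; \iota_{K^{*m}} * id_{K_{n}} \;:\; K_{n} \longrightarrow K_{m+n},
\]
which specializes to the prescribed value $K_{-}(\gamma_{n})$ when $m=1$ and to $id_{K_n}$ when $m=0$.

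The composition criterion (\ref{eq:criterion}) is then immediate, since it reduces to the functoriality of $K_{-}: (\mathbb{N},\leq) \to \mathfrak{Gr}$: the equality $\gamma_{m+n,\,m+n+p} \circ \gamma_{n,\,m+n} = \gamma_{n,\,m+n+p}$ in $(\mathbb{N},\leq)$ gives $K_{-}(\gamma_{m+n,m+n+p}) \circ K_{-}(\gamma_{n,m+n}) = K_{-}(\gamma_{n,m+n+p})$, which is precisely what (\ref{eq:criterion}) demands.

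It remains to verify the compatibility condition (\ref{eq:criterion'}): for all $g'' \in Q_{n}$ and all $g' \in Q_{m}$,
\[
\mathcal{A}_{\mathcal{Q}}\bigl(\bigl[\underline{m},id_{\underline{m+n}}\bigr]\bigr) \circ \mathcal{A}_{\mathcal{Q},n}(g'') \;=\; \mathcal{A}_{\mathcal{Q},m+n}(g' \natural g'') \circ \mathcal{A}_{\mathcal{Q}}\bigl(\bigl[\underline{m},id_{\underline{m+n}}\bigr]\bigr).
\]
The left-hand side equals $(\iota_{K^{*m}} * id_{K_n}) \circ \mathcal{A}_{\mathcal{Q},n}(g'') = \iota_{K^{*m}} * \mathcal{A}_{\mathcal{Q},n}(g'')$ by the interchange law in $(\mathfrak{gr}, *, 0_{\mathfrak{Gr}})$. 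Since $\mathcal{A}_{\mathcal{Q}}$ is strict monoidal, $\mathcal{A}_{\mathcal{Q},m+n}(g' \natural g'') = \mathcal{A}_{\mathcal{Q},m}(g') * \mathcal{A}_{\mathcal{Q},n}(g'')$, so the right-hand side rewrites as $\bigl(\mathcal{A}_{\mathcal{Q},m}(g') \circ \iota_{K^{*m}}\bigr) * \mathcal{A}_{\mathcal{Q},n}(g'')$. The essential observation, which is where initiality of the unit is used, is that $0_{\mathfrak{Gr}}$ is initial in $\mathfrak{gr}$: therefore $\mathcal{A}_{\mathcal{Q},m}(g') \circ \iota_{K^{*m}} = \iota_{K^{*m}}$ by uniqueness of the morphism from the trivial group, and the two sides coincide.

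I do not expect a serious obstruction; the argument is essentially a routine verification once the correct extension of $\mathcal{A}_{\mathcal{Q}}$ on the generators has been identified. The only mildly delicate point is distinguishing the two ways of composing inclusions and automorphisms under $*$ versus $\circ$, and this is cleanly handled by strict monoidality together with the initiality of $0_{\mathfrak{Gr}}$ in $\mathfrak{gr}$.
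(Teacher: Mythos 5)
Your proof is correct and follows essentially the same route as the paper: both invoke Lemma \ref{lem:criterionfamilymorphismsfunctor}, dispose of criterion (\ref{eq:criterion}) by the functoriality of $K_{-}$ on $\left(\mathbb{N},\leq\right)$, and verify criterion (\ref{eq:criterion'}) using strict monoidality of $\mathcal{A}_{\mathcal{Q}}$ together with the fact that the composite with the morphism out of the trivial group is unchanged. The only cosmetic difference is that you phrase this last step categorically via initiality of $0_{\mathfrak{Gr}}$ and the interchange law, whereas the paper evaluates element-wise on $k\in K_{n}$ and uses that $\mathcal{A}_{\mathcal{Q}}\left(q'\right)$ fixes the neutral element $e_{K_{n'}}$.
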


\begin{proof}
We use Lemma \ref{lem:criterionfamilymorphismsfunctor} to prove this
result: namely, we show that relations (\ref{eq:criterion}) and (\ref{eq:criterion'})
of this lemma are satisfied. It follows from the fact that $K_{-}$
is a functor on $\left(\mathbb{N},\leq\right)$, that the relation
(\ref{eq:criterion}) of Lemma \ref{lem:criterionfamilymorphismsfunctor}
is satisfied by $\mathcal{A}_{\mathcal{Q}}$. Let $n$ and $n'$ be
natural numbers such that $n'\geq n$, let $q\in Q_{n}$ and $q'\in Q_{n'}$.
We denote by $e_{K_{n'}}$ the neutral element of $K_{n'}$. Since
$\mathcal{A}_{\mathcal{Q}}$ is monoidal, we compute for all $k\in K_{n}$:
\begin{eqnarray*}
\left(\mathcal{A}_{\mathcal{Q}}\left(q'\natural q\right)\circ\mathcal{A}_{\mathcal{Q}}\left(\left[\underline{n'},id_{\underline{n'+n}}\right]\right)\right)\left(k\right) & = & \left(\mathcal{A}_{\mathcal{Q}}\left(q'\right)*\mathcal{A}_{\mathcal{Q}}\left(q\right)\right)\left(e_{K_{n'}}*k\right)\\
 & = & e_{K_{n'}}*\mathcal{A}_{\mathcal{Q}}\left(q\right)\left(k\right)\\
 & = & \left(\mathcal{A}_{\mathcal{Q}}\left(\left[\underline{n'},id_{\underline{n'+n}}\right]\right)\circ\mathcal{A}_{\mathcal{Q}}\left(q\right)\right)\left(k\right).
\end{eqnarray*}
Hence, the relation (\ref{eq:criterion'}) of Lemma \ref{lem:criterionfamilymorphismsfunctor}
is satisfied by $\mathcal{A}_{\mathcal{Q}}$.
\end{proof}

\paragraph{Recollections on strong polynomial functors.}

We deal here with the concept of strong and very strong polynomial
functors, which will be useful to prove Theorems \ref{thm:homologymcgtensorproduct}
and \ref{thm:resultAutFnwithbound}. We refer the reader to \cite[Section 3]{soulieLMBilan}
for a complete introduction to these notions for pre-braided monoidal
categories as source category, extending the previous framework due
to Djament and Vespa in \cite{DV3} for symmetric monoidal categories.
They also are particular case of coefficient systems of finite degree
introduced by Randal-Williams and Wahl in \cite{WahlRandal-Williams},
thus providing a natural setting to study homological stability.

\textbf{From now we fix $\left(\mathfrak{M},\natural,0\right)$ a
pre-braided strict monoidal category such that the monoidal unit $0$
is an initial object.} For all objects $X$ of $\mathfrak{M}$, the
monoidal structure $\natural$ defines the endofunctor $X\natural-:\mathfrak{M}\rightarrow\mathfrak{M}$,
which sends the object $Y$ to the object $X\natural Y$. We define
the translation functor $\tau_{X}:\mathbf{Fct}\left(\mathfrak{M},R\textrm{-}\mathfrak{Mod}\right)\rightarrow\mathbf{Fct}\left(\mathfrak{M},R\textrm{-}\mathfrak{Mod}\right)$
to be the endofunctor obtained by precomposition by $X\natural-$.

For all objects $F$ of $\mathbf{Fct}\left(\mathfrak{M},R\textrm{-}\mathfrak{Mod}\right)$
and all objects $X$ of $\mathfrak{M}$, we denote by $i_{X}\left(F\right):F\rightarrow\tau_{X}\left(F\right)$
the natural transformation induced by the unique morphism $\left[X,id_{X}\right]:0\rightarrow X$
of $\mathfrak{M}$. This induces $i_{X}:Id_{\mathbf{Fct}\left(\mathfrak{M},R\textrm{-}\mathfrak{Mod}\right)}\rightarrow\tau_{X}$
a natural transformation of $\mathbf{Fct}\left(\mathfrak{M},R\textrm{-}\mathfrak{Mod}\right)$.
Since the category $\mathbf{Fct}\left(\mathfrak{M},R\textrm{-}\mathfrak{Mod}\right)$
is abelian (since the target category $R\textrm{-}\mathfrak{Mod}$
is abelian), the kernel and cokernel of the natural transformation
$i_{X}$ exist. We define the functors $\kappa_{X}=\ker\left(i_{X}\right)$
and $\delta_{X}=\textrm{coker}\left(i_{X}\right)$. Then:
\begin{defn}
We recursively define on $d\in\mathbb{N}$ the categories $\mathcal{P}ol_{d}^{strong}\left(\mathfrak{M},R\textrm{-}\mathfrak{Mod}\right)$
and $\mathcal{VP}ol_{d}\left(\mathfrak{M},R\textrm{-}\mathfrak{Mod}\right)$
of strong and very strong polynomial functors of degree less than
or equal to $d$ to be the full subcategories of $\mathbf{Fct}\left(\mathfrak{M},R\textrm{-}\mathfrak{Mod}\right)$
as follows:

\begin{enumerate}
\item If $d<0$, $\mathcal{P}ol_{d}^{strong}\left(\mathfrak{M},R\textrm{-}\mathfrak{Mod}\right)=\mathcal{VP}ol_{d}\left(\mathfrak{M},R\textrm{-}\mathfrak{Mod}\right)=\left\{ 0\right\} $;
\item if $d\geq0$, the objects of $\mathcal{P}ol_{d}^{strong}\left(\mathfrak{M},R\textrm{-}\mathfrak{Mod}\right)$
are the functors $F$ such that the functor $\delta_{X}\left(F\right)$
is an object of $\mathcal{P}ol_{d-1}^{strong}\left(\mathfrak{M},R\textrm{-}\mathfrak{Mod}\right)$
for all objects $X$ of $\mathfrak{M}$; the objects of $\mathcal{VP}ol_{d}\left(\mathfrak{M},R\textrm{-}\mathfrak{Mod}\right)$
are the objects $F$ of $\mathcal{P}ol_{d}\left(\mathfrak{M},R\textrm{-}\mathfrak{Mod}\right)$
such that $\kappa_{X}\left(F\right)=0$ and the functor $\delta_{X}\left(F\right)$
is an object of $\mathcal{\mathcal{VP}}ol_{d-1}\left(\mathfrak{M},R\textrm{-}\mathfrak{Mod}\right)$
for all objects $X$ of $\mathfrak{M}$.
\end{enumerate}
For an object $F$ of $\mathbf{Fct}\left(\mathfrak{M},R\textrm{-}\mathfrak{Mod}\right)$
which is strong of degree less than or equal to $n\in\mathbb{N}$,
the smallest natural number $d\leq n$ for which $F$ is an object
of $\mathcal{P}ol_{d}^{strong}\left(\mathfrak{M},R\textrm{-}\mathfrak{Mod}\right)$
is called the strong degree of $F$. If in addition $F$ is strong
very strong polynomial, its strong degree is also the smallest natural
number $d\leq n$ for which $F$ is an object of $\mathcal{VP}ol_{d}\left(\mathfrak{M},R\textrm{-}\mathfrak{Mod}\right)$
and is then also called the very strong degree of $F$.
\end{defn}

\begin{rem}
\label{rem:caseunitterminal}If in addition the unit $0$ of the monoidal
structure $\left(\mathfrak{M},\natural,0\right)$ is a terminal object,
the evanescence functors automatically vanish and a fortiori the notions
of strong and very strong polynomial functors are equivalent.
\end{rem}

Furthermore, we have the following property which will be useful for
our further work. The result for strong polynomial functors is already
established in \cite[Proposition 3.8]{soulieLMBilan} and previously
for symmetric monoidal categories \cite[Proposition 1.7]{DV3}.
\begin{prop}
\label{prop:precompositionstrongmono}Let $\mathfrak{M}'$ be another
pre-braided strict monoidal category such that such that its monoidal
unit is an initial object. Let $\alpha:\mathfrak{M}\rightarrow\mathfrak{M}'$
be a strong monoidal functor. Then, the precomposition by $\alpha$
provide functors $\mathcal{VP}ol_{n}\left(\mathfrak{M}',R\textrm{-}\mathfrak{Mod}\right)\rightarrow\mathcal{VP}ol_{n}\left(\mathfrak{M},R\textrm{-}\mathfrak{Mod}\right)$
which preserves the degree of polynomiality.
\end{prop}

\begin{proof}
Let $X$ an object of $\mathfrak{M}$ and $F$ be an object of $\mathbf{Fct}\left(\mathfrak{M},R\textrm{-}\mathfrak{Mod}\right)$.
Since $\alpha$ is strong monoidal, we deduce that there is a natural
equivalence $\tau_{X}\left(F\circ\alpha\right)\cong\left(\tau_{\alpha\left(X\right)}F\right)\circ\alpha$.
It is a standard fact that the precomposition by $\alpha$ is an exact
functor. Then the universal properties of the kernel and cokernel
imply that there are natural equivalences $\delta_{X}\left(F\circ\alpha\right)\cong\left(\delta_{\alpha\left(X\right)}F\right)\circ\alpha$
and $\kappa_{X}\left(F\circ\alpha\right)\cong\left(\kappa_{\alpha\left(X\right)}F\right)\circ\alpha$.
The results then follow from a straightforward recursion on the degree
of polynomiality.
\end{proof}

\paragraph{First homology functor.}

Recall that the homology group $H_{1}\left(-,R\right)$ defines a
functor from the category $\mathfrak{Gr}$ to the category $R\textrm{-}\mathfrak{Mod}$
(see for example \cite[Section 8]{brown}). Hence, we introduce the
following functor: 
\begin{defn}
\label{prop:H1functoronG-1} The homology groups $\left\{ H_{1}\left(K_{n},R\right)\right\} _{n\in\mathbb{N}}$
assemble to define a functor $H_{1}\left(\mathcal{A}_{\mathcal{Q}},R\right):\mathfrak{U}\mathcal{Q}\rightarrow R\textrm{-}\mathfrak{Mod}$
by the composite $H_{1}\left(-,R\right)\circ\mathcal{A}_{\mathcal{Q}}$.
It is called the first homology functor of $\mathcal{A}_{\mathcal{Q}}$.
\end{defn}

If $R=\mathbb{Z}$ and $K_{n}$ is finitely generated for all natural
numbers $n$, the target category of $H_{1}\left(\mathcal{A}_{\mathcal{Q}},R\right)$
is the full subcategory $\mathbf{ab}\subset\mathbf{Ab}$ of finitely
generated abelian groups. Let $m$ be a natural number. We then define
a functor $H_{1}\left(\mathcal{A}_{\mathcal{Q}},\mathbb{Z}\right)^{\otimes m}:\mathfrak{U}\mathcal{Q}\rightarrow\mathbf{ab}$
by the composite $-^{\otimes m}\circ H_{1}\left(\mathcal{A}_{\mathcal{Q}},\mathbb{Z}\right)$
where $-^{\otimes m}:\mathbf{ab}\rightarrow\mathbf{ab}$ sends an
object $G$ to $G^{\otimes m}$.
\begin{lem}
\label{lem:H1tensorm}If the groups $K_{n}$ are finitely generated
free for all $n$, then the functor $H_{1}\left(\mathcal{A}_{\mathcal{Q}},\mathbb{Z}\right)^{\otimes m}$
is very strong polynomial of degree $m$.
\end{lem}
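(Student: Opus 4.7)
The plan is to proceed by induction on $m$, first showing that $F := H_1(\mathcal{A}_\mathcal{Q}, \mathbb{Z})$ itself is very strong polynomial of degree $1$, and then invoking a Leibniz-type rule for $\delta_1$ applied to tensor products. The base case $m=0$ amounts to observing that the constant functor $\mathbb{Z}$ is trivially very strong polynomial of degree $0$.

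For the case $m=1$, I would compute $\kappa_1(F)$ and $\delta_1(F)$ directly. By Lemma \ref{lem:ExtensionfunctorUQ}, the canonical morphism $[\underline{1}, id_{\underline{1+n}}]: \underline{n} \to \underline{1+n}$ of $\mathfrak{U}\mathcal{Q}$ is sent by $\mathcal{A}_\mathcal{Q}$ to the inclusion $\iota_{K} * id_{K_n}: K_n \hookrightarrow K * K_n = K_{1+n}$, whose abelianisation is a split monomorphism of free abelian groups with cokernel naturally isomorphic to $H_1(K, \mathbb{Z})$. Hence $\kappa_1(F) = 0$ and $\delta_1(F)$ has pointwise value $H_1(K, \mathbb{Z})$. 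To identify $\delta_1(F)$ with the constant functor, I would check triviality of the induced action on this cokernel for all morphisms of $\mathfrak{U}\mathcal{Q}$. Since every morphism decomposes as an automorphism following a canonical morphism, two sub-checks remain: the monoidality of $\mathcal{A}_\mathcal{Q}$ gives $\mathcal{A}_\mathcal{Q}(\underline{1}\natural q) = id_K * \mathcal{A}_{\mathcal{Q},n}(q)$, so each $q \in Q_n$ acts as the identity on the new $H_1(K, \mathbb{Z})$-summand, while an application of Proposition \ref{prop:Quillen'sconstructionprebraided} to $\underline{1} \natural [\underline{1}, id_{\underline{1+n}}]$ produces an extra factor $(b^{\mathcal{Q}}_{\underline{1},\underline{1}})^{-1} \natural id_{\underline{n}}$ that simply swaps the two newly adjoined $H_1(K, \mathbb{Z})$-summands and so still induces the identity on the cokernel. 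It follows that $\delta_1(F)$ is the constant functor at $H_1(K, \mathbb{Z})$, which lies in $\mathcal{VP}ol_0$, so $F$ is very strong polynomial of degree $1$.

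For the inductive step, I would use the Leibniz-type statement that if $F_1, F_2: \mathfrak{U}\mathcal{Q} \to \mathbf{ab}$ are very strong polynomial functors of respective degrees $d_1, d_2$ taking values in free abelian groups, then their pointwise tensor product $F_1 \otimes F_2$ is very strong polynomial of degree $d_1+d_2$. This rests on the identifications $\tau_1(F_1 \otimes F_2) = \tau_1 F_1 \otimes \tau_1 F_2$ and $i_1(F_1 \otimes F_2) = i_1 F_1 \otimes i_1 F_2$, together with the splittings $\tau_1 F_i \cong F_i \oplus \delta_1 F_i$ (available since $\kappa_1 F_i = 0$ and the relevant short exact sequences involve only free abelian groups, hence split). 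These produce $\kappa_1(F_1 \otimes F_2) = 0$ and a natural decomposition of $\delta_1(F_1 \otimes F_2)$ as a finite direct sum of tensor products of functors of strictly smaller total degree, closing the induction on $d_1 + d_2$. Applying this to $F_1 = F^{\otimes(m-1)}$ and $F_2 = F$ gives the result, and iterating $\delta_1$ $m$ times produces the nonzero constant functor at $H_1(K, \mathbb{Z})^{\otimes m}$, confirming that the strong degree is exactly $m$.

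The main obstacle is the identification of $\delta_1(F)$ as a genuinely constant functor rather than one that is merely pointwise $H_1(K,\mathbb{Z})$: this hinges on carefully unwinding the pre-braided monoidal product formula of Proposition \ref{prop:Quillen'sconstructionprebraided} on $\underline{1} \natural [\underline{1}, id_{\underline{1+n}}]$ and verifying that the resulting braiding contribution collapses after passing to the cokernel. The remaining portions of the argument are routine naturality diagrams enabled by the fact that $H_1(K_n, \mathbb{Z})$ is always free abelian, so tensoring with it preserves exactness.
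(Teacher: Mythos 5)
Your argument is correct and reaches the stated conclusion, but it is not the route the paper takes. The paper's proof is a two-step appeal to general machinery: first, $H_{1}\left(-,\mathbb{Z}\right):\left(\mathfrak{gr},*,0\right)\rightarrow\left(\mathbf{ab},\oplus,0\right)$ is strong monoidal (abelianisation turns free products into direct sums), so the composite $H_{1}\left(\mathcal{A}_{\mathcal{Q}},\mathbb{Z}\right)$ is a strong monoidal functor and therefore very strong polynomial of degree $1$; second, the $m$-th tensor power $-^{\otimes m}:\mathbf{ab}\rightarrow\mathbf{ab}$ is very strong polynomial of degree $m$ (cited as well known from \cite[Appendice A]{DV1}), and Proposition \ref{prop:precompositionstrongmono} says that precomposition by a strong monoidal functor preserves the polynomial degree. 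Your proof re-derives both ingredients by hand: your $m=1$ computation of $\kappa_{1}$ and $\delta_{1}$ is precisely the verification that a strong monoidal functor into $\left(\mathbf{ab},\oplus,0\right)$ is very strong polynomial of degree $1$ --- including the one genuinely delicate point, namely that the braiding term produced by Proposition \ref{prop:Quillen'sconstructionprebraided} in $\underline{1}\natural\left[\underline{1},id_{\underline{1+n}}\right]$ dies on the cokernel --- and your Leibniz rule is exactly the content hidden in the assertion that $-^{\otimes m}$ has degree $m$. The paper's route buys brevity, with the braiding bookkeeping done once and for all in the cited references; your route makes visible where the freeness of the $K_{n}$ (hence the splitness of $i_{1}$ and the exactness of $-\otimes H_{1}\left(K,\mathbb{Z}\right)$) is actually used, and it establishes that the strong degree is exactly $m$ rather than at most $m$.

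Two points deserve tightening. Your claim that the braiding contribution ``swaps the two newly adjoined summands'' uses that $\mathcal{A}_{\mathcal{Q}}$ carries $b_{\underline{1},\underline{1}}^{\mathcal{Q}}$ to the symmetry of $\left(\mathfrak{gr},*,0\right)$, which is slightly more than the stated hypothesis that $\mathcal{A}_{\mathcal{Q}}$ is strict monoidal; the paper's proof makes the same implicit compatibility assumption when it treats $H_{1}\left(\mathcal{A}_{\mathcal{Q}},\mathbb{Z}\right)$ as strong monoidal on all of $\mathfrak{U}\mathcal{Q}$, so this is not a gap relative to the paper, but it should be said explicitly. Second, in the Leibniz step the decomposition of $\delta_{1}\left(F_{1}\otimes F_{2}\right)$ is naturally only a filtration with graded pieces $\delta_{1}F_{1}\otimes F_{2}$, $F_{1}\otimes\delta_{1}F_{2}$ and $\delta_{1}F_{1}\otimes\delta_{1}F_{2}$, since the pointwise splittings of $\tau_{1}F_{i}$ need not be natural; this is harmless because the categories $\mathcal{P}ol_{d}^{strong}$ and $\mathcal{VP}ol_{d}$ are closed under extensions, but the induction should be phrased in those terms.
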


\begin{proof}
Recall that the free product gives a symmetric monoidal structure
$\left(\mathfrak{Gr},*,0_{\mathfrak{Gr}}\right)$ (which restricts
to $\mathfrak{gr}$), that the direct sum defines a symmetric monoidal
structure $\left(\mathbf{Ab},\oplus,0_{\mathfrak{Gr}}\right)$ (which
restricts to $\mathbf{ab}$) and that symmetric monoidal categories
are particular cases of pre-braided monoidal ones. By the result of
the homology of a free product of groups (see for example \cite[Corollary 6.2.10]{Weibel1}),
the first homology group is a strong monoidal functor $H_{1}\left(-,R\right):\left(\mathfrak{Gr},*,0_{\mathfrak{Gr}}\right)\rightarrow\left(\mathbf{Ab},\oplus,0_{\mathfrak{Gr}}\right)$
and a fortiori so is the restriction $H_{1}\left(-,R\right):\mathfrak{gr}\rightarrow\mathbf{ab}$.
As $\mathcal{A}_{\mathcal{Q}}$ is a strict monoidal functor, then
$H_{1}\left(\mathcal{A}_{\mathcal{Q}},\mathbb{Z}\right)$ is a strong
monoidal functor. This is a well-known fact that the $m$-th tensor
power functor $-^{\otimes m}:\mathbf{ab}\rightarrow\mathbf{ab}$ is
very strong polynomial of degree $m$ (see \cite[Appendice A]{DV1}
for example). Then the result follows from Proposition \ref{prop:precompositionstrongmono}.
\end{proof}

\paragraph{Pointwise tensor product.}

We finally recall the following result, used to prove Theorem \ref{thm:resultAutFnwithbound}.
For $\mathfrak{M}$ a pre-braided strict monoidal category such that
such that its monoidal unit is an initial object, the pointwise tensor
product of two objects of $\mathbf{Fct}\left(\mathfrak{M},R\textrm{-}\mathfrak{Mod}\right)$
defines an object of $\mathbf{Fct}\left(\mathfrak{M},R\textrm{-}\mathfrak{Mod}\right)$,
assigning
\[
\left(M\underset{R}{\otimes}M'\right)\left(X\right)=M\left(X\right)\underset{R}{\otimes}M'\left(X\right)
\]
for $M,M'\in\mathbf{Fct}\left(\mathfrak{M},R\textrm{-}\mathfrak{Mod}\right)$
and for all objects $X$ of $\mathfrak{M}$.
\begin{lem}
\label{cor:increasepoly} If $M$ and $M'$ are strong polynomial
functors, then $M\underset{R}{\otimes}M'$ is a strong polynomial
functor.
\end{lem}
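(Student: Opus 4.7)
The plan is to proceed by induction on the sum $d+d'$ of the strong polynomial degrees of $M$ and $M'$, proving the refined statement that $M\underset{R}{\otimes}M'$ lies in $\mathcal{P}ol_{d+d'}^{strong}\left(\mathfrak{U}\mathcal{Q},R\textrm{-}\mathfrak{Mod}\right)$. The base case (one of the degrees negative, hence the corresponding functor zero) is immediate, and the inductive step reduces, via the recursive definition, to bounding the strong degree of $\delta_{1}\left(M\underset{R}{\otimes}M'\right)$.

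The key computation is an exact sequence for this cokernel. Since the tensor product is pointwise, there is a canonical isomorphism $\tau_{1}\left(M\underset{R}{\otimes}M'\right)\cong\tau_{1}(M)\underset{R}{\otimes}\tau_{1}(M')$ under which the natural transformation $i_{1}\left(M\underset{R}{\otimes}M'\right)$ identifies with $i_{1}(M)\underset{R}{\otimes}i_{1}(M')$. I would factor this map as
\[
M\underset{R}{\otimes}M'\xrightarrow{\mathrm{id}\underset{R}{\otimes}i_{1}(M')}M\underset{R}{\otimes}\tau_{1}(M')\xrightarrow{i_{1}(M)\underset{R}{\otimes}\mathrm{id}}\tau_{1}(M)\underset{R}{\otimes}\tau_{1}(M')
\]
and apply the standard right exact sequence for cokernels of a composable pair, together with the right-exactness of the pointwise tensor product in each variable, to obtain
\[
M\underset{R}{\otimes}\delta_{1}(M')\longrightarrow\delta_{1}\left(M\underset{R}{\otimes}M'\right)\longrightarrow\delta_{1}(M)\underset{R}{\otimes}\tau_{1}(M')\longrightarrow0.
\]

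To close the induction, I need two subsidiary facts, both treated by ancillary inductions. First, the endofunctor $\tau_{1}$ preserves $\mathcal{P}ol_{d}^{strong}\left(\mathfrak{U}\mathcal{Q},R\textrm{-}\mathfrak{Mod}\right)$ for every $d$; this follows from the exactness of $\tau_{1}$ (it is a precomposition functor) together with the commutation $\delta_{1}\circ\tau_{1}=\tau_{1}\circ\delta_{1}$. Second, the class $\mathcal{P}ol_{d}^{strong}\left(\mathfrak{U}\mathcal{Q},R\textrm{-}\mathfrak{Mod}\right)$ is closed under quotients and, more generally, under right exact sequences, which follows from the right-exactness of the cokernel functor $\delta_{1}$ by induction on $d$. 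Granted these, both outer terms of the displayed exact sequence lie in $\mathcal{P}ol_{d+d'-1}^{strong}\left(\mathfrak{U}\mathcal{Q},R\textrm{-}\mathfrak{Mod}\right)$ by the induction hypothesis on the sum of degrees, and closure under right exact sequences then forces $\delta_{1}\left(M\underset{R}{\otimes}M'\right)$ to lie there as well, completing the inductive step. I expect the main technical point to be the clean verification of these two closure properties; once they are in place, the rest of the argument is purely formal.
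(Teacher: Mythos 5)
Your proof is correct, and it takes a genuinely different route from the paper's. The paper's own argument is a two-line one: it asserts that $\delta_{1}$, being a colimit, commutes with the pointwise product, and concludes $\delta_{1}^{d+1}\left(M\times M'\right)=0$ for $d$ the larger of the two strong degrees. That commutation is the right statement for the direct sum $M\oplus M'$, but $\delta_{1}$ does \emph{not} commute with the pointwise tensor product in general (for $I$ of strong degree one, $\delta_{1}\left(I\underset{R}{\otimes}I\right)$ has degree one while $\delta_{1}\left(I\right)\underset{R}{\otimes}\delta_{1}\left(I\right)$ is constant), and the bound $\max\left(d,d'\right)+1$ it would yield is too strong already for tensor powers. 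Your argument avoids this entirely: factoring $i_{1}\left(M\underset{R}{\otimes}M'\right)$ through $M\underset{R}{\otimes}\tau_{1}\left(M'\right)$ and combining the cokernel sequence of a composite with right-exactness of $\underset{R}{\otimes}$ gives the exact sequence $M\underset{R}{\otimes}\delta_{1}\left(M'\right)\rightarrow\delta_{1}\left(M\underset{R}{\otimes}M'\right)\rightarrow\delta_{1}\left(M\right)\underset{R}{\otimes}\tau_{1}\left(M'\right)\rightarrow0$, and the induction on $d+d'$ closes once one knows that $\tau_{1}$ preserves $\mathcal{P}ol_{d}^{strong}$ and that $\mathcal{P}ol_{d}^{strong}$ is closed under right-exact sequences. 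Both closure properties hold by the ancillary inductions you describe (for the first, note $\tau_{1}\delta_{1}\cong\delta_{1}\tau_{1}$ up to the isomorphism coming from the pre-braiding; for the second, use right-exactness of $\delta_{1}$ and induct on $d$). What your approach buys is the sharp additive bound $d+d'$ on the strong degree and a proof that does not rest on a commutation that fails for $\underset{R}{\otimes}$; this is the standard argument in the Djament--Vespa style and is, if anything, more reliable than the one printed in the paper.
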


\begin{proof}
We fix an object $X$ of $\mathfrak{M}$. Since the translation functor
$\tau_{X}$ commutes with all limits, and as a colimit of a natural
transformation between $Id$ and $\tau_{X}$, the functor $\delta_{X}$
commutes with the (pointwise) product $\times$. Let $d$ be the largest
of the two strong polynomial degrees. Hence, $\underset{d+1\textrm{ times}}{\underbrace{\delta_{X}\cdots\delta_{X}}}\left(M\times M'\right)=0$
and therefore $M\underset{R}{\otimes}M'$ is a strong polynomial functor
of degree less than or equal to $d+1$.
\end{proof}

\subsection{Applications\label{subsec:Applications}}

Many families of mapping class groups fall within the framework of
Section \ref{subsec:frameworksemidirect}. Corollary \ref{cor:bigcor}
is the key result to compute the homology with twisted coefficients
for these families of groups.

\subsubsection{Mapping class groups of orientable surfaces\label{subsec:FirstsectionMapping-class-groups}}

Let $\varSigma_{g,i}^{s}$ denote a smooth compact connected orientable
surface with (orientable) genus $g\in\mathbb{N}$, $s\in\mathbb{N}$
marked points and $i\in\left\{ 1,2\right\} $ boundary components,
with $I:\left[-1,1\right]\rightarrow\partial\varSigma_{g,i}^{s}$
a parametrized interval in the boundary if $i=1$ and $p=0\in I$
a basepoint. We denote by $\mathbf{\Gamma}_{g,1}^{s}$ (respectively
$\mathbf{\Gamma}_{g,1}^{\left[s\right]}$) the isotopy classes of
diffeomorphisms of $\varSigma_{g,1}^{s}$ preserving the orientation,
restricting to the identity on a neighbourhood of the parametrized
interval $I$ and permuting (respectively fixing) the marked points.
If $s=0$, we omit it from the notation. Recall that, up to isotopy,
fixing the interval $I$ is the same as fixing the whole boundary
component pointwise. When there is no ambiguity, we omit the parametrized
interval $I$ from the notation.

Let $\natural$ be the boundary connected sum along half of each interval
$I$. For two decorated surfaces $\varSigma_{g_{1},1}^{s_{1}}$ and
$\varSigma_{g_{2},1}^{s_{2}}$, the boundary connected sum $\varSigma_{g_{1},1}^{s_{1}}\natural\varSigma_{g_{2},1}^{s_{2}}$
is defined as the surface obtained from gluing $\varSigma_{g_{1},1}^{s_{1}}$
and $\varSigma_{g_{2},1}^{s_{2}}$ along the half-interval $I_{1}^{+}$
and the half-interval $I_{2}^{-}$, and $I_{1}\natural I_{2}=I_{1}^{-}\bigcup I_{2}^{+}$.
The homeomorphisms being the identity on a neighbourhood of the parametrized
intervals $I_{1}$ and $I_{2}$, we canonically extend the diffeomorphisms
of $\varSigma_{g_{1},1}^{s_{1}}$ and $\varSigma_{g_{2},1}^{s_{2}}$
to $\varSigma_{g_{1},1}^{s_{1}}\natural\varSigma_{g_{2},1}^{s_{2}}$.
For completeness, we refer to \cite[Section 5.6.1]{WahlRandal-Williams},
for technical details.

We denote by $\mathbf{\Gamma}_{g,2}$ the isotopy classes of diffeomorphisms
of $\varSigma_{g,2}^{0}$ preserving the orientation and fixing the
boundary components pointwise. Recall that $R$ is a commutative ring
and we assume that the various mapping class groups act trivially
on it.

The following result is an essential tool for our work:
\begin{thm}
\label{thm:Birmansesorient}\cite{BirmanbraidslinksMCG} Let $g\geq1$,
$s\geq0$ be natural numbers and $x$ be a marked point in the interior
of $\varSigma_{g,1}^{s}$. Forgetting $x$ induces a map $\omega_{s}:\mathbf{\Gamma}_{g,1}^{\left[s+1\right]}\rightarrow\mathbf{\Gamma}_{g,1}^{\left[s\right]}$
which defines the following short exact sequence:

\begin{equation}
\xymatrix{1\ar@{->}[r] & \pi_{1}\left(\varSigma_{g,1}^{s},x\right)\ar@{->}[r] & \mathbf{\Gamma}_{g,1}^{\left[s+1\right]}\ar@{->}[r]^{\omega_{s}} & \mathbf{\Gamma}_{g,1}^{\left[s\right]}\ar@{->}[r] & 1.}
\label{eq:Birman1}
\end{equation}
Gluing a disc with a marked point $\varSigma_{0,1}^{1}$ on the boundary
component without the interval $I$ induces the following short exact
sequence:
\begin{equation}
\xymatrix{1\ar@{->}[r] & \mathbb{Z}\ar@{->}[r] & \mathbf{\Gamma}_{g,2}\ar@{->}[r]^{\rho} & \mathbf{\Gamma}_{g,1}^{1}\ar@{->}[r] & 1.}
\label{eq:Birman2}
\end{equation}
\end{thm}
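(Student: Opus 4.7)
The plan is to realize both short exact sequences as outputs of fibration sequences of diffeomorphism groups, and to extract the claimed four-term exactness on $\pi_0$ from the associated long exact sequences in homotopy. The crucial input in both cases is the classical contractibility of each connected component of $\mathrm{Diff}_\partial$ for a surface of genus $g \geq 1$ with nonempty boundary (Earle--Eells for $g \geq 2$; Earle--Schatz for $g = 1$), which propagates to diffeomorphism groups with fixed interior points.

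For sequence (\ref{eq:Birman1}), I first identify $\mathbf{\Gamma}_{g,1}^{[s]} \cong \pi_0(\mathrm{Diff}_\partial(\varSigma_{g,1}; x_1, \ldots, x_s))$, where $\mathrm{Diff}_\partial$ consists of orientation-preserving diffeomorphisms fixing a neighborhood of $I$ pointwise and fixing the $s$ marked points pointwise. By Palais--Cerf, point evaluation $\phi \mapsto \phi(x_{s+1})$ assembles into a Serre fibration
$$\mathrm{Diff}_\partial(\varSigma_{g,1}; x_1, \ldots, x_{s+1}) \hookrightarrow \mathrm{Diff}_\partial(\varSigma_{g,1}; x_1, \ldots, x_s) \xrightarrow{\mathrm{ev}_{x_{s+1}}} \varSigma_{g,1} \setminus \{x_1, \ldots, x_s\}.$$
Taking the long exact sequence in homotopy and using that $\pi_1$ of the identity component of the total space vanishes yields precisely (\ref{eq:Birman1}), with the connecting homomorphism realising the classical point-pushing map $\pi_1(\varSigma_{g,1}^s, x) \to \mathbf{\Gamma}_{g,1}^{[s+1]}$.

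For sequence (\ref{eq:Birman2}), I would use a closely related fibration built from tangent-frame evaluation at $x$. Fixing an oriented orthonormal frame at $x$ in $\varSigma_{g,1}^1$ is isotopically equivalent to fixing a small disc neighborhood of $x$, which in turn is equivalent to fixing the second boundary component of $\varSigma_{g,2}$. Differentiating at $x$ along a chosen reference frame therefore gives a Serre fibration
$$\mathrm{Diff}_\partial(\varSigma_{g,2}) \hookrightarrow \mathrm{Diff}_\partial(\varSigma_{g,1}^1) \twoheadrightarrow SO(2).$$
Its long exact sequence in homotopy collapses, via $\pi_1$-vanishing of the middle identity component, to $\pi_1(SO(2)) = \mathbb{Z} \to \mathbf{\Gamma}_{g,2} \to \mathbf{\Gamma}_{g,1}^1 \to 1$, which is exactly (\ref{eq:Birman2}); the generator of $\mathbb{Z}$ is represented by the Dehn twist around the capping circle.

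The main obstacle in both arguments is the precise identification (in particular, the injectivity) of the leftmost map. In (\ref{eq:Birman1}) this amounts to verifying that no essential loop based at $x$ point-pushes to the trivial mapping class; in (\ref{eq:Birman2}) that the capping Dehn twist has infinite order in $\mathbf{\Gamma}_{g,2}$. Both reduce to the $\pi_1$-vanishing inputs cited above, which is why the homotopical framework is the correct home for the argument. A complete treatment is given in Birman's cited monograph, so in the present paper one only needs to translate those arguments into the indexing conventions $\mathbf{\Gamma}_{g,1}^{[s]}$ and $\mathbf{\Gamma}_{g,2}$.
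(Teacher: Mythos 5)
Your argument is correct, but note that the paper itself offers no proof of this statement: Theorem \ref{thm:Birmansesorient} is quoted directly from Birman's work, so there is nothing internal to compare against. What you have written is the standard modern derivation of both Birman exact sequences: the evaluation fibration $\mathrm{Diff}_{\partial}(\varSigma_{g,1};x_{1},\dots,x_{s+1})\rightarrow\mathrm{Diff}_{\partial}(\varSigma_{g,1};x_{1},\dots,x_{s})\rightarrow\varSigma_{g,1}\setminus\{x_{1},\dots,x_{s}\}$ for (\ref{eq:Birman1}), and the frame-evaluation fibration over $SO(2)$ for the capping sequence (\ref{eq:Birman2}), with injectivity on the left in both cases coming from the contractibility of the identity component of $\mathrm{Diff}_{\partial}$ of a compact surface with boundary (Earle--Schatz, Gramain). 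This is exactly the toolkit the paper deploys a few lines later in its proof of Lemma \ref{lem:splitting}, where the fibration sequence (\ref{eq:fibrationsequence}) and Gramain's contractibility theorem are used to split (\ref{eq:Birman1}); so your proof is not only valid but stylistically consistent with the surrounding text. Two minor points worth making explicit if you were to write this out in full: the exactness of the tail $\pi_{1}(B)\rightarrow\pi_{0}(F)\rightarrow\pi_{0}(E)\rightarrow\pi_{0}(B)$ is a priori only an exact sequence of pointed sets, and one should observe that all maps here are group homomorphisms (the total space and fibre are topological groups and the connecting map is the point-pushing, respectively twist, homomorphism); and surjectivity of the evaluation maps requires the isotopy extension theorem, which is indeed what your citation of Palais--Cerf supplies.
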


For all natural numbers $g$ and $s$, we denote by $a_{\varSigma_{g,1}^{s}}^{x}$
the action of the mapping class group $\mathbf{\Gamma}_{g,1}^{\left[s\right]}$
on the fundamental group $\pi_{1}\left(\varSigma_{g,1}^{s},x\right)$.
\begin{lem}
\label{lem:splitting}The short exact sequence (\ref{eq:Birman1})
splits.
\end{lem}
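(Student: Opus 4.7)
The plan is to construct an explicit section $\sigma_s : \mathbf{\Gamma}_{g,1}^{[s]} \to \mathbf{\Gamma}_{g,1}^{[s+1]}$ of the forgetful map $\omega_s$, by exploiting the fact that every representative of an element of $\mathbf{\Gamma}_{g,1}^{[s]}$ is required to be the identity on a neighborhood of the parametrized interval $I$. This is the standard trick that distinguishes the Birman exact sequence for surfaces with boundary from its closed-surface analogue: the boundary provides a "parking spot" where the extra marked point can be deposited.

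First, I would fix once and for all a point $x \in \varSigma_{g,1}^s$ lying in the interior of an open collar neighborhood $U$ of $I$ on which every representative diffeomorphism of $\mathbf{\Gamma}_{g,1}^{[s]}$ is by convention the identity, chosen distinct from the $s$ existing marked points. Given any representative $f$ of a class in $\mathbf{\Gamma}_{g,1}^{[s]}$, the restriction of $f$ to $U$ is the identity, so $f$ automatically fixes $x$; viewing $\varSigma_{g,1}^{s+1}$ as $\varSigma_{g,1}^{s}$ with $x$ added as an $(s+1)$-st marked point, $f$ thereby represents a class in $\mathbf{\Gamma}_{g,1}^{[s+1]}$. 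This defines $\sigma_s$ on representatives.

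Next I would verify that $\sigma_s$ descends to a group homomorphism. Multiplicativity is immediate since the composition of two diffeomorphisms which are the identity on $U$ is again the identity on $U$. For well-definedness on isotopy classes, I would observe that any isotopy $\left\{ f_t \right\}_{t \in [0,1]}$ between two such representatives proceeds through diffeomorphisms identity on $U$, hence fixes $x$ at every time $t$, and therefore defines a valid isotopy between the corresponding classes in $\mathbf{\Gamma}_{g,1}^{[s+1]}$. Finally, $\omega_s \circ \sigma_s = \mathrm{id}_{\mathbf{\Gamma}_{g,1}^{[s]}}$ by construction, since forgetting the marked point $x$ from $\sigma_s(f)$ returns the original class of $f$.

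The only mild obstacle is the uniformity point in step two, namely that the neighborhood $U$ can be chosen independently of the representative. This is a standard convention in the setup of mapping class groups of surfaces with boundary (representatives and isotopies are taken identity on a fixed collar of $I$), and once this is in place the verification becomes essentially tautological.
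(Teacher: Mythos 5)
Your construction is geometrically the same as the paper's: the paper realizes $\varSigma_{g,1}^{s+1}$ as $\varSigma_{0,1}^{1}\natural\varSigma_{g,1}^{s}$ and extends diffeomorphisms by the identity over the disc $\varSigma_{0,1}^{1}$ carrying the new marked point, which is exactly your ``park the point in a region near $I$ where everything is the identity'' idea. The problem is that the step you dismiss as a convention-plus-tautology is the entire mathematical content of the lemma, and as written your argument has a gap precisely there. With the definition used in the paper, a class in $\mathbf{\Gamma}_{g,1}^{\left[s\right]}$ is an isotopy class rel (a neighbourhood of) $I$, not rel a fixed collar $U$. So take two representatives $f_{0},f_{1}$ that are both the identity on your chosen $U\ni x$ and an isotopy $\left\{ f_{t}\right\}$ between them; nothing forces $f_{t}$ to fix $x$ for intermediate $t$. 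The two lifts $\sigma_{s}\left(f_{0}\right)$ and $\sigma_{s}\left(f_{1}\right)$ then differ in $\mathbf{\Gamma}_{g,1}^{\left[s+1\right]}$ by the point-pushing element along the loop $t\mapsto f_{t}\left(x\right)$ in $\pi_{1}\left(\varSigma_{g,1}^{s},x\right)$, i.e.\ by an a priori nontrivial element of the kernel of the Birman sequence. Well-definedness of $\sigma_{s}$ is exactly the assertion that this loop is always null-homotopic, and that is not formal: it is equivalent to the vanishing of $\pi_{1}$ of the relevant component of the diffeomorphism group (via the evaluation/restriction fibration), which is where the paper invokes the fibration sequence of Cerf and the contractibility theorem of Gramain. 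This is also why the analogous naive splitting fails for low-genus closed surfaces, where those homotopy groups are nonzero.

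Declaring instead that the mapping class group is $\pi_{0}$ of the diffeomorphisms that are the identity on a \emph{fixed} collar $U$, with isotopies through such, does make your verification tautological, but then you owe a proof that this model computes the same group as the one in the paper (identity on \emph{some} neighbourhood of $I$, isotopies rel such). That comparison is again a Cerf-type statement, namely the contractibility of the space of collars/embeddings, so the same input reappears. To repair the proof, either run the long exact sequence of the fibration (\ref{eq:fibrationsequence}) together with \cite[Th\'eor\`eme 5]{gramain1973type} as the paper does, or at minimum cite the contractibility of the components of $Diff^{\partial}\left(\varSigma_{g,1}^{s}\right)$ to kill the point-pushing loop. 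As it stands, the proposal identifies the right section but does not establish that it is well defined.
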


\begin{proof}
We denote by $\textrm{Diff}^{\textrm{\ensuremath{\partial_{0}},points}}\left(\varSigma_{g,1}^{n}\right)$
the space of diffeomorphisms of the surface $\varSigma_{g,1}^{n}$
which fix the boundary pointwise and fix the marked points. We recall
that the exact sequence (\ref{eq:Birman1}) is constructed from the
long exact sequence of homotopy groups with the locally trivial fibration
$\textrm{Diff}^{\textrm{\ensuremath{\partial_{0}},points}}\left(\varSigma_{g,1}^{1+s}\right)\overset{\widehat{\omega}_{s}}{\hookrightarrow}\textrm{Diff}^{\textrm{\ensuremath{\partial_{0}},points}}\left(\varSigma_{g,1}^{s}\right)\rightarrow\varSigma_{g,1}\setminus\left\{ s\textrm{ points}\right\} $
using the fact that $\pi_{1}\left(\textrm{Diff}^{\textrm{\ensuremath{\partial_{0}},points}}\left(\varSigma_{g,1}^{s}\right)\right)=0$
by \cite[Théorème 1]{gramain1973type}. Namely the fibre $\widehat{\omega}_{s}$
is defined by forgetting that the additional marked point is fixed
and induces the morphism $\omega_{s}:\mathbf{\Gamma}_{g,1}^{\left[s+1\right]}\rightarrow\mathbf{\Gamma}_{g,1}^{\left[s\right]}$.

We consider $\textrm{Emb}\left(\varSigma_{0,1}^{1},\varSigma_{0,1}^{1}\natural\varSigma_{g,1}^{s}\right)$
the space of embeddings taking $I_{\varSigma_{0,1}^{1}}^{-}$ to $I_{\varSigma_{0,1}^{1}\natural\varSigma_{g,1}^{s}}^{-}$
and such that the complement of $\varSigma_{0,1}^{1}$ in $\varSigma_{0,1}^{1}\natural\varSigma_{g,1}^{s}\simeq\varSigma_{g,1}^{1+s}$
is diffeomorphic to $\varSigma_{g,1}^{s}$. Using the parameterised
isotopy extension theorem \cite[II 2.2.2 Corollaire 2]{Cerffibration},
there is a fibration sequence 
\[
\textrm{Diff}^{\textrm{\ensuremath{\partial_{0}},points}}\left(\varSigma_{g,1}^{s}\right)\overset{\varrho_{s}}{\hookrightarrow}\textrm{Diff}^{\textrm{\ensuremath{\partial_{0}},points}}\left(\varSigma_{0,1}^{1}\natural\varSigma_{g,1}^{s}\right)\rightarrow\textrm{Emb}\left(\left(\varSigma_{0,1}^{1}\right),\left(\varSigma_{0,1}^{1}\natural\varSigma_{g,1}^{s}\right)\right),
\]
which long exact sequence of homotopy groups defines a morphism $\pi_{0}\left(\varrho_{s}\right):\mathbf{\Gamma}_{g,1}^{\left[s\right]}\rightarrow\mathbf{\Gamma}_{g,1}^{\left[s+1\right]}$.
More precisely, for all $\varphi\in\textrm{Diff}^{\textrm{\ensuremath{\partial_{0}},points}}\left(\varSigma_{g,1}^{s}\right)$,
the morphism $\varrho_{s}$ is explicitly defined by $\varrho_{s}\left(\varphi\right)=id_{\varSigma_{0,1}^{1}}\natural\varphi$.
Implicitly, we identify $\varSigma_{g,1}^{s}$ with $\varSigma_{0,1}^{0}\natural\varSigma_{g,1}^{s}$:
there is a self-embedding $e:\varSigma_{g,1}^{s}\hookrightarrow\varSigma_{g,1}^{s}$
(the complement of whose image is a disc). Then $\widehat{\omega}_{s}\circ\varrho_{s}\left(\varphi\right)$
is defined on the image of $e$ by $e\circ\varphi\circ e^{-1}$ and
by the identity on the complement of the image of $e$. Therefore
the choice of an isotopy of self-embeddings from $e$ to the identity
of $\varSigma_{g,1}^{s}$ induces a homotopy from $\widehat{\omega}_{s}\circ\varrho_{s}$
to the identity of $\textrm{Diff}^{\textrm{\ensuremath{\partial_{0}},points}}\left(\varSigma_{g,1}^{s}\right)$.
Hence, $\widehat{\omega}_{s}\circ\varrho_{s}\left(\varphi\right)$
is isotopic to $\varphi$: the composition $\widehat{\omega}_{s}\circ\varrho_{s}$
is thus isotopic to the identity. We deduce that $\pi_{0}\left(\varrho_{s}\right):\mathbf{\Gamma}_{g,1}^{\left[s\right]}\rightarrow\mathbf{\Gamma}_{g,1}^{\left[s+1\right]}$
is a $1$-sided inverse of the map $\omega_{s}:\mathbf{\Gamma}_{g,1}^{\left[s+1\right]}\rightarrow\mathbf{\Gamma}_{g,1}^{\left[s\right]}$.
Therefore the morphism $\pi_{0}\left(\varrho_{s}\right)$ is injective
and provides a splitting of the exact sequence (\ref{eq:Birman1}),
which defines an isomorphism $\mathbf{\Gamma}_{g,1}^{\left[s+1\right]}\cong\pi_{1}\left(\varSigma_{g,1}^{s},x\right)\underset{a_{\varSigma_{g,1}^{s}}^{x}}{\rtimes}\mathbf{\Gamma}_{g,1}^{\left[s\right]}$.
\end{proof}
Hence, applying Corollary \ref{cor:bigcor} to this situation, we
obtain:
\begin{prop}
\label{cor:Firststepmcg}Let $n$, $s$ and $q\geq1$ be natural numbers.
Let $M_{n}$ be a $R\left[\mathbf{\Gamma}_{n,1}^{\left[s+1\right]}\right]$-module
on which $\pi_{1}\left(\varSigma_{n,1}^{s},x\right)$ acts trivially.
Then:
\begin{equation}
H_{q}\left(\mathbf{\Gamma}_{n,1}^{\left[s+1\right]},M_{n}\right)\cong H_{q-1}\left(\mathbf{\Gamma}_{n,1}^{\left[s\right]},H_{1}\left(\varSigma_{n,1}^{s},R\right)\underset{R}{\otimes}M_{n}\right)\oplus H_{q}\left(\mathbf{\Gamma}_{n,1}^{\left[s\right]},M_{n}\right).\label{eq:firstmcgorient}
\end{equation}
\end{prop}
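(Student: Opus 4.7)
The plan is a direct application of Corollary \ref{cor:bigcor} to the split short exact sequence furnished by Birman's sequence (\ref{eq:Birman1}) together with Lemma \ref{lem:splitting}. First, I would observe that since $\varSigma_{n,1}^{s}$ is a compact connected surface with non-empty boundary, it has the homotopy type of a finite $1$-dimensional CW-complex (a wedge of circles). Consequently, the fundamental group $\pi_{1}\left(\varSigma_{n,1}^{s},x\right)$ is a finitely generated free group. This is the key hypothesis needed to invoke Corollary \ref{cor:bigcor}.

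Next, Lemma \ref{lem:splitting} exhibits a splitting of the Birman short exact sequence (\ref{eq:Birman1}), yielding the semidirect product decomposition
\[
\mathbf{\Gamma}_{n,1}^{\left[s+1\right]}\cong\pi_{1}\left(\varSigma_{n,1}^{s},x\right)\underset{a_{\varSigma_{n,1}^{s}}^{x}}{\rtimes}\mathbf{\Gamma}_{n,1}^{\left[s\right]}.
\]
This fits precisely into the framework of Section \ref{subsec:frameworksemidirect} with $Q_{n}=\mathbf{\Gamma}_{n,1}^{\left[s\right]}$, $K_{n}=\pi_{1}\left(\varSigma_{n,1}^{s},x\right)$, and the conjugation action provided by $a_{\varSigma_{n,1}^{s}}^{x}$. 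The hypothesis that $\pi_{1}\left(\varSigma_{n,1}^{s},x\right)$ acts trivially on $M_{n}$ is precisely the condition required by Corollary \ref{cor:bigcor}.

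Applying Corollary \ref{cor:bigcor} therefore gives, for each $q\geq 1$, the natural splitting
\[
H_{q}\left(\mathbf{\Gamma}_{n,1}^{\left[s+1\right]},M_{n}\right)\cong H_{q-1}\left(\mathbf{\Gamma}_{n,1}^{\left[s\right]},H_{1}\left(\pi_{1}\left(\varSigma_{n,1}^{s},x\right),R\right)\underset{R}{\otimes}M_{n}\right)\oplus H_{q}\left(\mathbf{\Gamma}_{n,1}^{\left[s\right]},M_{n}\right).
\]
To conclude, one identifies $H_{1}\left(\pi_{1}\left(\varSigma_{n,1}^{s},x\right),R\right)$ with $H_{1}\left(\varSigma_{n,1}^{s},R\right)$ via the Hurewicz theorem (which applies because $\varSigma_{n,1}^{s}$ is a connected $K(\pi,1)$-space, its universal cover being contractible as $\pi_{1}$ is free). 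This finishes the proof.

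There is no significant obstacle here; the statement is essentially a specialisation of the general result Corollary \ref{cor:bigcor} to this geometric setting. The only points requiring care are the freeness of $\pi_{1}\left(\varSigma_{n,1}^{s},x\right)$ and the existence of the splitting, both of which have been established before the statement.
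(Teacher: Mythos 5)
Your proof is correct and follows exactly the route the paper takes: the paper derives this proposition immediately from Corollary \ref{cor:bigcor} applied to the semidirect product decomposition $\mathbf{\Gamma}_{n,1}^{\left[s+1\right]}\cong\pi_{1}\left(\varSigma_{n,1}^{s},x\right)\rtimes\mathbf{\Gamma}_{n,1}^{\left[s\right]}$ furnished by the Birman sequence and Lemma \ref{lem:splitting}. Your additional remarks on the freeness of $\pi_{1}\left(\varSigma_{n,1}^{s},x\right)$ and the identification $H_{1}\left(\pi_{1}\left(\varSigma_{n,1}^{s},x\right),R\right)\cong H_{1}\left(\varSigma_{n,1}^{s},R\right)$ are exactly the implicit checks the paper leaves to the reader.
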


\paragraph{Computation of $H_{d}\left(\mathbf{\Gamma}_{\infty,1},H_{1}\left(\varSigma_{\infty,1},\mathbb{Z}\right)^{\otimes m}\right)$.}

An application of Proposition \ref{cor:Firststepmcg} is to compute
the stable homology groups $H_{d}\left(\mathbf{\Gamma}_{\infty,1},H_{1}\left(\varSigma_{\infty,1},\mathbb{Z}\right)^{\otimes m}\right)$
for all natural numbers $m$ and $d$. First, let us introduce a suitable
groupoid for our work, inspired by \cite[Section 5.6]{WahlRandal-Williams}.
We fix a unit disc with one marked point denoted by $\varSigma_{0,1}^{1}$
and a torus with one boundary component denoted by $\varSigma_{1,1}^{0}$.
Recall that by the classification of surfaces, for all $g,s\in\mathbb{N}$,
there is an homeomorphism $\varSigma_{g,1}^{s}\simeq\left(\underset{s}{\natural}\varSigma_{0,1}^{1}\right)\natural\left(\underset{g}{\natural}\varSigma_{1,1}^{0}\right)$.
\begin{defn}
\label{def:groupoidsurface}Let $\mathfrak{\mathfrak{M}}_{2}$ be
the skeleton of the groupoid defined by:

\begin{itemize}
\item Objects: the surfaces $\varSigma_{g,1}^{s}$ for all natural numbers
$g$ and $s$, with $I:\left[-1,1\right]\rightarrow\partial\varSigma_{g,1}^{s}$
a parametrized interval in the boundary and $p=0\in I$ a basepoint;
\item Morphisms: $Aut_{\mathfrak{M}_{2}}\left(\varSigma_{g,1}^{s}\right)=\mathbf{\Gamma}_{g,1}^{s}$
for all natural numbers $g$ and $s$.
\end{itemize}
\end{defn}

Let $\mathfrak{M}_{2}^{g}$ be the full subgroupoid of $\mathfrak{M}_{2}$
on the objects $\left\{ \varSigma_{n,1}\right\} _{n\in\mathbb{N}}$.
As stated in the proof of \cite[Proposition 5.18]{WahlRandal-Williams},
the boundary connected sum $\natural$ induces a strict braided monoidal
structure $\left(\mathfrak{M}_{2}^{g},\natural,\left(\varSigma_{0,1},I\right)\right)$.

For all natural numbers $g$, we denote by $a_{\varSigma_{g,1}}$
the action of the mapping class group $\mathbf{\Gamma}_{g,1}$ on
the fundamental group $\pi_{1}\left(\varSigma_{g,1},p\right)$. We
define a functor $\mathcal{A}_{\mathfrak{M}_{2}^{g}}:\mathfrak{M}_{2}^{g}\rightarrow\mathfrak{Gr}$
to be the fundamental groups $\pi_{1}\left(\varSigma_{1,1},p\right)$
and $\pi_{1}\left(\varSigma_{0,1},p\right)$ on the objects $\varSigma_{1,1}$
and $\varSigma_{0,1}$, and then inductively $\mathcal{A}_{\mathfrak{M}_{2}^{g}}\left(\varSigma_{n,1}\natural\varSigma_{n',1}\right)=\pi_{1}\left(\varSigma_{n,1},p\right)*\pi_{1}\left(\varSigma_{n',1},p\right)$
for all natural numbers $n$, and assigning the morphism $a_{\varSigma_{g,1}}\left(\varphi\right)$
for all $\varphi\in\mathbf{\Gamma}_{g,1}$. Recall that the group
$\pi_{1}\left(\varSigma_{n,1},p\right)$ is free of rank $2n$. By
Van Kampen's theorem (see for example \cite[Section 1.2]{hatcher2002algebraic}),
the group $\mathcal{A}_{\mathfrak{M}_{2}^{g}}\left(\varSigma_{n,1}\natural\varSigma_{n',1}\right)$
is isomorphic to the fundamental group of the surface $\varSigma_{n,1}\natural\varSigma_{n',1}$:
our assignment is thus consistent.

Hence, it follows from Lemma \ref{lem:ExtensionfunctorUQ}:
\begin{prop}
\label{prop:definesymplecticfunctor}The functor $\mathcal{A}_{\mathfrak{M}_{2}^{g}}:\left(\mathfrak{M}_{2}^{g},\natural,\varSigma_{0,1}^{0}\right)\rightarrow\left(\mathfrak{gr},*,0_{\mathfrak{Gr}}\right)$
is strict monoidal and extends to a functor $\pi_{1}\left(-,p\right):\mathfrak{U}\mathfrak{M}_{2}^{g}\rightarrow\mathfrak{gr}$
by assigning for all natural numbers $n$ and $n'$:
\[
\pi_{1}\left(-,p\right)\left(\left[\varSigma_{n',1},id_{\varSigma_{n'+n,1}}\right]\right)=\iota_{\pi_{1}\left(\varSigma_{n',1},p\right)}*id_{\pi_{1}\left(\varSigma_{n,1},p\right)}.
\]
\end{prop}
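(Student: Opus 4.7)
The plan is to invoke Lemma \ref{lem:ExtensionfunctorUQ} directly, so the only real work is verifying its hypotheses in the present setting. Setting $K = \pi_{1}(\varSigma_{1,1}, p)$, which is free of rank two, the Van Kampen theorem combined with the inductive definition of $\mathcal{A}_{\mathfrak{M}_{2}^{g}}$ identifies $\mathcal{A}_{\mathfrak{M}_{2}^{g}}(\varSigma_{n,1})$ naturally with $K^{*n}$, and the inclusion $\varSigma_{n,1} \hookrightarrow \varSigma_{n+1,1} \cong \varSigma_{1,1} \natural \varSigma_{n,1}$ induces exactly $\iota_{K} * id_{K^{*n}}$ on fundamental groups, because the newly glued $\varSigma_{1,1}$ summand contributes only the neutral element at the base point $p$. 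Thus the family $K_{-}:(\mathbb{N},\leq) \to \mathfrak{Gr}$ arising from $\mathcal{A}_{\mathfrak{M}_{2}^{g}}$ has the shape required by Lemma \ref{lem:ExtensionfunctorUQ}, and $\mathfrak{M}_{2}^{g}$ is braided strict monoidal by \cite[Proposition 5.18]{WahlRandal-Williams}.

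Next I would check strict monoidality of $\mathcal{A}_{\mathfrak{M}_{2}^{g}}$ itself. On objects, this holds by construction together with the same Van Kampen identification. On morphisms, given $\varphi \in \mathbf{\Gamma}_{n,1}$ and $\psi \in \mathbf{\Gamma}_{n',1}$, I must verify that the automorphism of $\pi_{1}(\varSigma_{n,1} \natural \varSigma_{n',1}, p)$ induced by $\varphi \natural \psi$ coincides with $a_{\varSigma_{n,1}}(\varphi) * a_{\varSigma_{n',1}}(\psi)$. This follows because the boundary connected sum is defined by gluing along half-intervals away from $p$, so $\varphi \natural \psi$ restricts to $\varphi$ on one summand and to $\psi$ on the other; under the Van Kampen decomposition, the two summands generate the two distinct free factors of $\pi_{1}$, so the induced automorphism splits as a free product of the two individual induced automorphisms.

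Once strict monoidality is established, Lemma \ref{lem:ExtensionfunctorUQ} provides the extension to $\mathfrak{U}\mathfrak{M}_{2}^{g}$ together with the prescribed effect $\pi_{1}([\varSigma_{n',1}, id_{\varSigma_{n'+n,1}}]) = \iota_{\pi_{1}(\varSigma_{n',1}, p)} * id_{\pi_{1}(\varSigma_{n,1}, p)}$ on the generating morphisms, as the identification $K_{-}(\gamma_{n}) = \iota_{K} * id_{K_{n}}$ is precisely what the lemma's recipe demands. The only subtlety is the careful bookkeeping around the base point and the parametrised interval: one has to keep straight the Van Kampen identification, the behaviour of $\natural$ relative to $p$, and the compatibility of the inclusion $\varSigma_{n,1} \hookrightarrow \varSigma_{n+1,1}$ with the map $\iota_{K} * id : K^{*n} \hookrightarrow K^{*(n+1)}$. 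Beyond this setup the argument is formal, and there is no genuine obstacle; the technical content has already been absorbed into Lemma \ref{lem:ExtensionfunctorUQ}.
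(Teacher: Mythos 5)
Your proposal is correct and follows essentially the same route as the paper: the paper likewise defines $\mathcal{A}_{\mathfrak{M}_{2}^{g}}$ inductively on objects, uses Van Kampen's theorem to identify $\mathcal{A}_{\mathfrak{M}_{2}^{g}}\left(\varSigma_{n,1}\natural\varSigma_{n',1}\right)$ with the fundamental group of the glued surface (so that $K_{n}\cong K^{*n}$ with $K=\pi_{1}\left(\varSigma_{1,1},p\right)$ and $K_{-}\left(\gamma_{n}\right)=\iota_{K}*id_{K_{n}}$), and then deduces the extension to $\mathfrak{U}\mathfrak{M}_{2}^{g}$ directly from Lemma \ref{lem:ExtensionfunctorUQ}. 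Your explicit check that $\varphi\natural\psi$ induces $a_{\varSigma_{n,1}}\left(\varphi\right)*a_{\varSigma_{n',1}}\left(\psi\right)$ is a detail the paper leaves implicit, but it is the right justification of strict monoidality on morphisms.
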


For all natural numbers $n$, since the free group $\pi_{1}\left(\varSigma_{n,1},x\right)$
acts trivially on the homology group $H_{1}\left(\varSigma_{n,1},\mathbb{Z}\right)$,
we have an isomorphism:
\[
H_{1}\left(\pi_{1}\left(\varSigma_{n,1},x\right),H_{1}\left(\varSigma_{n,1},\mathbb{Z}\right)^{\otimes m}\right)\cong H_{1}\left(\varSigma_{n,1},\mathbb{Z}\right)^{\otimes\left(m+1\right)}.
\]
Also, the action of $\mathbf{\Gamma}_{n,2}$ on $H_{1}\left(\varSigma_{n,1},\mathbb{Z}\right)^{\otimes m}$
is induced by the one of $\mathbf{\Gamma}_{n,1}$ via the surjections
$\omega_{0}\circ\rho:\mathbf{\Gamma}_{n,2}\twoheadrightarrow\mathbf{\Gamma}_{n,1}^{1}\twoheadrightarrow\mathbf{\Gamma}_{n,1}$.
It follows from Lemma \ref{lem:H1tensorm} that the functor $H_{1}\left(\mathcal{A}_{\mathfrak{M}_{2}^{g}},\mathbb{Z}\right)^{\otimes m}$
is very strong polynomial of degree $m$. Using the terminology of
\cite{boldsen} and \cite{cohenmadsen}, $H_{1}\left(\mathcal{A}_{\mathfrak{M}_{2}^{g}},\mathbb{Z}\right)^{\otimes m}$
is thus a coefficient system of degree $m$. Hence, it follows from
the stability results of Boldsen \cite{boldsen} or Cohen and Madsen
\cite{cohenmadsen} that:
\begin{thm}
\cite[Theorem 4.17]{boldsen}\cite[Theorem 0.4]{cohenmadsen} \label{thm:stabilitykey}Let
$m$, $n$ and $q$ be natural numbers such that $2n\geq3q+m$:
\[
H_{q}\left(\mathbf{\Gamma}_{n,2},H_{1}\left(\varSigma_{n,1},\mathbb{Z}\right)^{\otimes m}\right)\cong H_{q}\left(\mathbf{\Gamma}_{n,1},H_{1}\left(\varSigma_{n,1},\mathbb{Z}\right)^{\otimes m}\right).
\]
\end{thm}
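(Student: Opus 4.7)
The plan is to apply the cited stability theorems of Boldsen and of Cohen--Madsen directly to the coefficient system $H_{1}\left(\mathcal{A}_{\mathfrak{M}_{2}^{g}},\mathbb{Z}\right)^{\otimes m}$. By Lemma \ref{lem:H1tensorm}, this functor on $\mathfrak{U}\mathfrak{M}_{2}^{g}$ is very strong polynomial of degree $m$; under the standard dictionary between polynomial functors on $\mathfrak{U}\mathfrak{M}_{2}^{g}$ and coefficient systems in the sense of \cite{boldsen,cohenmadsen}, this precisely corresponds to a coefficient system of degree $m$. The surjection $\omega_{0}\circ\rho:\mathbf{\Gamma}_{n,2}\twoheadrightarrow\mathbf{\Gamma}_{n,1}$ obtained from the Birman short exact sequences (\ref{eq:Birman1}) and (\ref{eq:Birman2}) is the stabilization map considered in those references, and the compatibility of the $\mathbf{\Gamma}_{n,2}$- and $\mathbf{\Gamma}_{n,1}$-actions on $H_{1}\left(\varSigma_{n,1},\mathbb{Z}\right)^{\otimes m}$ has been noted just before the statement.

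With the setup in place, I would invoke \cite[Theorem 4.17]{boldsen} (or equivalently \cite[Theorem 0.4]{cohenmadsen}), which asserts that for any coefficient system of degree $m$ the stabilization map induces an isomorphism in $H_{q}$ whenever $2n\geq3q+m$. Applying this to the very strong polynomial functor $H_{1}\left(\mathcal{A}_{\mathfrak{M}_{2}^{g}},\mathbb{Z}\right)^{\otimes m}$ yields directly the isomorphism
\[
H_{q}\left(\mathbf{\Gamma}_{n,2},H_{1}\left(\varSigma_{n,1},\mathbb{Z}\right)^{\otimes m}\right)\cong H_{q}\left(\mathbf{\Gamma}_{n,1},H_{1}\left(\varSigma_{n,1},\mathbb{Z}\right)^{\otimes m}\right)
\]
in the claimed range.

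The main, and rather minor, obstacle is the bookkeeping of conventions. On the one hand, one must confirm that the notion of ``coefficient system of degree $m$'' used in \cite{boldsen,cohenmadsen} is realized by a very strong polynomial functor of degree $m$ in the sense of Section \ref{sec:Appendix}; this identification is standard, and the polynomiality itself is guaranteed by Lemma \ref{lem:H1tensorm} together with Proposition \ref{prop:precompositionstrongmono}. On the other hand, one must make sure that the specific stabilization map between $\mathbf{\Gamma}_{n,2}$ and $\mathbf{\Gamma}_{n,1}$ agrees with the one used in \cite{boldsen,cohenmadsen}. No spectral sequence argument or further computation is needed at this step, since all of the substantive work has already been carried out in establishing the polynomiality of the coefficient functor.
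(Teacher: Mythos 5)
Your proposal is correct and follows essentially the same route as the paper: the paper likewise treats this as a direct application of the cited stability theorems of Boldsen and Cohen--Madsen, justified by noting that the $\mathbf{\Gamma}_{n,2}$-action factors through $\omega_{0}\circ\rho$ and that Lemma \ref{lem:H1tensorm} makes $H_{1}\left(\mathcal{A}_{\mathfrak{M}_{2}^{g}},\mathbb{Z}\right)^{\otimes m}$ a coefficient system of degree $m$ in their sense. No further argument is given in the paper, so your bookkeeping of conventions is exactly the content of its justification.
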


Then, we prove:
\begin{thm}
\label{thm:homologymcgtensorproduct} Let $m$, $n$ and $q$ be natural
numbers such that $2n\geq3q+m$. Then, there is an isomorphism:
\[
H_{q}\left(\mathbf{\Gamma}_{n,1},H_{1}\left(\varSigma_{n,1},\mathbb{Z}\right)^{\otimes m}\right)\cong\underset{\left\lfloor \frac{q-1}{2}\right\rfloor \geq k\geq0}{\bigoplus}H_{q-(2k+1)}\left(\mathbf{\Gamma}_{n,1},H_{1}\left(\varSigma_{n,1},\mathbb{Z}\right)^{\otimes m-1}\right).
\]
\end{thm}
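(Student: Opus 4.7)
The plan is to establish, in the stable range, the recursion
\[
H_q\bigl(\mathbf{\Gamma}_{n,1}, H_1(\varSigma_{n,1},\mathbb{Z})^{\otimes m}\bigr) \cong H_{q-2}\bigl(\mathbf{\Gamma}_{n,1}, H_1(\varSigma_{n,1},\mathbb{Z})^{\otimes m}\bigr) \oplus H_{q-1}\bigl(\mathbf{\Gamma}_{n,1}, H_1(\varSigma_{n,1},\mathbb{Z})^{\otimes m-1}\bigr)
\]
and then iterate it $\lfloor q/2\rfloor$ times down to the smallest homological degrees, yielding the announced direct sum. Abbreviating $H_1 := H_1(\varSigma_{n,1},\mathbb{Z})$, the recursion will follow from combining the two Birman exact sequences of Theorem \ref{thm:Birmansesorient} with the stability statement of Theorem \ref{thm:stabilitykey}.

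First, since the Birman sequence (\ref{eq:Birman1}) splits (Lemma \ref{lem:splitting}), Proposition \ref{cor:Firststepmcg} applied with $s=0$ and the trivial $\pi_1(\varSigma_{n,1},x)$-module $M_n = H_1^{\otimes m-1}$ produces
\[
H_q(\mathbf{\Gamma}_{n,1}^1, H_1^{\otimes m-1}) \cong H_{q-1}(\mathbf{\Gamma}_{n,1}, H_1^{\otimes m}) \oplus H_q(\mathbf{\Gamma}_{n,1}, H_1^{\otimes m-1}),
\]
the second summand being the image of the coinflation along $\omega_0 : \mathbf{\Gamma}_{n,1}^1 \twoheadrightarrow \mathbf{\Gamma}_{n,1}$.

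Second, the Birman sequence (\ref{eq:Birman2}) is a central extension with free kernel $\mathbb{Z}$, so Proposition \ref{thm:bigthm3} (which requires only a free kernel, not a splitting) produces a Gysin-type long exact sequence relating $H_\ast(\mathbf{\Gamma}_{n,1}^1, H_1^{\otimes m-1})$ to $H_\ast(\mathbf{\Gamma}_{n,2}, H_1^{\otimes m-1})$; Theorem \ref{thm:stabilitykey} identifies the latter with $H_\ast(\mathbf{\Gamma}_{n,1}, H_1^{\otimes m-1})$ in the given range. Using that $\omega_0 \circ \rho$ realises the capping map $\mathbf{\Gamma}_{n,2} \to \mathbf{\Gamma}_{n,1}$ at the level of surfaces, a careful analysis of the maps in the Gysin sequence shows that it collapses into a split short exact sequence
\[
0 \to H_q(\mathbf{\Gamma}_{n,1}, H_1^{\otimes m-1}) \to H_q(\mathbf{\Gamma}_{n,1}^1, H_1^{\otimes m-1}) \to H_{q-2}(\mathbf{\Gamma}_{n,1}^1, H_1^{\otimes m-1}) \to 0
\]
compatibly with the previous decomposition, the cokernel being identified with the summand $H_{q-1}(\mathbf{\Gamma}_{n,1}, H_1^{\otimes m})$.

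Equating this identification with the decomposition of $H_{q-2}(\mathbf{\Gamma}_{n,1}^1, H_1^{\otimes m-1})$ produced by the first Birman sequence at $q-2$ and reindexing yields the announced recursion; iterating it $\lfloor q/2\rfloor$ times produces the claimed direct sum, the hypothesis $2n\geq 3q+m$ ensuring that the stability assumption of Theorem \ref{thm:stabilitykey} and the validity of Proposition \ref{cor:Firststepmcg} remain in force at every intermediate step. The principal technical obstacle is the identification of the image and cokernel of the Gysin differential $d^2$ with the correct summand of the first Birman decomposition: this relies on a careful bookkeeping of the geometric maps $\rho$, $\omega_0$ and the stabilisation map between the surfaces $\varSigma_{n,2}$, $\varSigma_{n,1}^1$ and $\varSigma_{n,1}$. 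All remaining steps are formal consequences of the long and short exact sequences produced by Propositions \ref{cor:Firststepmcg} and \ref{thm:bigthm3}.
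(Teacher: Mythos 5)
Your argument is essentially the paper's own proof: the paper likewise combines the Gysin-type long exact sequence of the central extension (\ref{eq:Birman2}) with the splitting of Proposition \ref{cor:Firststepmcg} and the stability isomorphism of Theorem \ref{thm:stabilitykey} to show that $\lambda_q=H_q\left(\rho,-\right)$ is split injective with cokernel $H_{q-2}\left(\mathbf{\Gamma}_{n,1}^{1},H_{1}\left(\varSigma_{n,1},\mathbb{Z}\right)^{\otimes m-1}\right)$, which is exactly your recursion. Like the paper, you leave the bookkeeping at the bottom of the iteration implicit (for even $q$ the recursion terminates in a degree-zero term $H_{0}\left(\mathbf{\Gamma}_{n,1},H_{1}\left(\varSigma_{n,1},\mathbb{Z}\right)^{\otimes m}\right)$ whose treatment the paper also elides), so the proposal matches the published argument step for step.
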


\begin{proof}
For the purposes of this proof, we abbreviate $H_{1}\left(\varSigma_{n,1},\mathbb{Z}\right)^{\otimes m}$
by $H^{\left(m\right)}$. The Lyndon-Hochschild-Serre spectral sequence
with coefficients given by $H^{\left(m\right)}$ associated with the
short exact sequence (\ref{eq:Birman2}) has only two non-trivial
rows. Hence, for all natural numbers $n\geq1$, we obtain the following
long exact sequence where we denote $\lambda_{q}=H_{q}\left(\rho,H^{\left(m\right)}\right)$:

\begin{equation}
\xymatrix{\cdots\ar@{->}[r]^{d_{q+1,0}^{2}\,\,\,\,\,\,\,\,\,\,\,\,\,\,\,\,\,\,\,\,\,\,\,\,\,\,} & H_{q-1}\left(\mathbf{\Gamma}_{n,1}^{1},H^{\left(m\right)}\right)\ar@{->}[r] & H_{q}\left(\mathbf{\Gamma}_{n,2},H^{\left(m\right)}\right)\ar@{->}[r]^{\lambda_{q}} & H_{q}\left(\mathbf{\Gamma}_{n,1}^{1},H^{\left(m\right)}\right)\ar@{->}[r]^{\,\,\,\,\,\,\,\,\,\,\,\,\,\,\,\,\,\,\,\,\,\,d_{q,0}^{2}} & \cdots}
.\label{eq:lesdeuxi=0000E8mess}
\end{equation}
Recall from Corollary \ref{cor:bigcor} that, denoting $\varphi_{q}^{sp}$
a splitting of $\varphi_{q}:H_{q-1}\left(\mathbf{\Gamma}_{n,1},H^{\left(m+1\right)}\right)\rightarrow H_{q}\left(\mathbf{\Gamma}_{n,1}^{1},H^{\left(m\right)}\right)$
(which exists by the splitting lemma for abelian groups), the isomorphism
of Proposition \ref{cor:Firststepmcg} is defined by
\[
\varphi_{q}^{sp}\oplus H_{q}\left(\omega_{0},H^{\left(m\right)}\right):H_{q}\left(\mathbf{\Gamma}_{n,1}^{1},H^{\left(m\right)}\right)\cong H_{q-1}\left(\mathbf{\Gamma}_{n,1},H^{\left(m+1\right)}\right)\oplus H_{q}\left(\mathbf{\Gamma}_{n,1},H^{\left(m\right)}\right).
\]
Let us consider the projection $pr:H_{q-1}\left(\mathbf{\Gamma}_{n,1},H^{\left(m+1\right)}\right)\oplus H_{q}\left(\mathbf{\Gamma}_{n,1},H^{\left(m\right)}\right)\twoheadrightarrow H_{q}\left(\mathbf{\Gamma}_{n,1},H^{\left(m\right)}\right)$.
Then, fixing a natural number $n$ so that $2n\geq3q+m$ and applying
Theorem \ref{thm:stabilitykey} (the isomorphism it gives being induced
by $\omega_{0}\circ\rho$), the composition
\[
H_{q}\left(\omega_{0}\circ\rho,H^{\left(m\right)}\right)^{-1}\circ pr\circ\left(\varphi_{q}^{sp}\oplus H_{q}\left(\omega_{0},H^{\left(m\right)}\right)\right)
\]
defines a splitting of $\lambda_{q}$. Therefore $\lambda_{q}$ is
split-injective and we obtain from the long exact sequence (\ref{eq:lesdeuxi=0000E8mess})
the following isomorphism for $2n\geq3q+m$:
\[
H_{q}\left(\mathbf{\Gamma}_{n,1}^{1},H^{\left(m\right)}\right)\cong H_{q}\left(\mathbf{\Gamma}_{n,2},H^{\left(m\right)}\right)\oplus H_{q-2}\left(\mathbf{\Gamma}_{n,1}^{1},H^{\left(m\right)}\right).
\]

Now we again apply Proposition \ref{cor:Firststepmcg} to the homology
groups $H_{q}\left(\mathbf{\Gamma}_{n,1}^{1},H^{\left(m\right)}\right)$
and $H_{q-2}\left(\mathbf{\Gamma}_{n,1}^{1},H^{\left(m\right)}\right)$.
Then it follows from Theorem \ref{thm:stabilitykey} that we have
the following isomorphism for $2n\geq3q+m$:

\[
H_{q-1}\left(\mathbf{\Gamma}_{n,1},H^{\left(m+1\right)}\right)\oplus H_{q}\left(\mathbf{\Gamma}_{n,2},H^{\left(m\right)}\right)\cong H_{q}\left(\mathbf{\Gamma}_{n,2},H^{\left(m\right)}\right)\oplus H_{q-2}\left(\mathbf{\Gamma}_{n,1},H^{\left(m\right)}\right)\oplus H_{q-3}\left(\mathbf{\Gamma}_{n,1},H^{\left(m+1\right)}\right).
\]
Note that the summand $H_{q}\left(\mathbf{\Gamma}_{n,2},H^{\left(m\right)}\right)$
on both side of this isomorphism is the image of the split-injective
morphism $\lambda_{q}$. Hence the image through the differential
$d_{q,0}^{2}$ gives the following isomorphism for $2n\geq3q+m$:
\[
H_{q-1}\left(\mathbf{\Gamma}_{n,1},H^{\left(m+1\right)}\right)\cong H_{q-2}\left(\mathbf{\Gamma}_{n,1},H^{\left(m\right)}\right)\oplus H_{q-3}\left(\mathbf{\Gamma}_{n,1},H^{\left(m+1\right)}\right).
\]
The result then follows by induction on $q$.
\end{proof}
\begin{rem}
In \cite[Theorem 1.B.]{Kawa1}, Kawazumi leads the analogous computation
for cohomology. The method and techniques used in \cite{Kawa1} are
different from the ones presented here. Using the Universal Coefficient-type
theorem for twisted coefficients (see for example \cite[Théorème I.5.5.2]{Godement}),
Theorem \ref{thm:homologymcgtensorproduct} recovers the computation
\cite[Theorem 1.B.]{Kawa1}.
\end{rem}

\paragraph{Computation of $H_{d}\left(\mathbf{\Gamma}_{\infty,1}^{1},\mathbb{\mathbb{Z}}\right)$.}

Another application of Proposition \ref{cor:Firststepmcg} is to compute
the stable homology groups $H_{d}\left(\mathbf{\Gamma}_{\infty,1}^{1},\mathbb{\mathbb{Z}}\right)$
for all natural numbers $d$. Using Proposition \ref{cor:Firststepmcg}
with constant module $\mathbb{Z}$ and Theorem \ref{thm:homologymcgtensorproduct}
with $m=1$, we prove:
\begin{cor}
Let $n$ and $q$ be natural numbers such that $2n\geq3q$. Then,
there is an isomorphism for all $m\geq0$:
\begin{eqnarray*}
H_{q}\left(\mathbf{\Gamma}_{n,1}^{1},H_{1}\left(\varSigma_{n,1},\mathbb{Z}\right)^{\otimes m}\right) & \cong & \underset{\left\lfloor \frac{q-1}{2}\right\rfloor \geq k\geq0}{\bigoplus}H_{q-(2k+1)}\left(\mathbf{\Gamma}_{n,1},H_{1}\left(\varSigma_{n,1},\mathbb{Z}\right)^{\otimes m-1}\right)\\
 &  & \underset{\left\lfloor \frac{q-2}{2}\right\rfloor \geq k\geq0}{\bigoplus}H_{q-(2k+2)}\left(\mathbf{\Gamma}_{n,1},H_{1}\left(\varSigma_{n,1},\mathbb{Z}\right)^{\otimes m}\right).
\end{eqnarray*}
In particular
\[
H_{q}\left(\mathbf{\Gamma}_{n,1}^{1},\mathbb{Z}\right)\cong\underset{\left\lfloor \frac{q}{2}\right\rfloor \geq k\geq0}{\bigoplus}H_{q-2k}\left(\mathbf{\Gamma}_{n,1},\mathbb{Z}\right).
\]
\end{cor}

Using other techniques (namely an equivalence of classifying spaces),
Bödigheimer and Tillmann prove the more general result:
\begin{thm}
\label{thm:resultBodiTil}\cite[Corollary 1.2]{boedigtilman2} Let
$q$ and $n$ be natural numbers such that $n\geq2q$. For all natural
numbers $s$
\[
H_{q}\left(\mathbf{\Gamma}_{n,1}^{\left[s\right]},\mathbb{Z}\right)\cong\underset{k+l=q}{\bigoplus}\left(H_{k}\left(\mathbf{\Gamma}_{n,1},\mathbb{Z}\right)\underset{\mathbb{Z}}{\otimes}H_{l}\left(\left(\mathbb{C}P^{\infty}\right)^{\times s},\mathbb{Z}\right)\right)
\]
where $\mathbb{C}P^{\infty}$ denotes the infinite dimensional complex
projective space.
\end{thm}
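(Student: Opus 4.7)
The plan is to proceed by induction on $s$, treating the base case $s=0$ directly from the stated hypotheses and then adapting the same strategy to the inductive step.

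For the base case $s=0$, I apply Proposition \ref{cor:Firststepmcg} to the Birman short exact sequence (\ref{eq:Birman1}) for $\mathbf{\Gamma}_{n,1}^{[1]}\to\mathbf{\Gamma}_{n,1}$, with the trivial module $M_{n}=\mathbb{Z}$. Since $\pi_{1}(\varSigma_{n,1},x)$ acts trivially on $\mathbb{Z}$, this immediately yields
\[
H_{q}\!\left(\mathbf{\Gamma}_{n,1}^{[1]},\mathbb{Z}\right)\cong H_{q-1}\!\left(\mathbf{\Gamma}_{n,1},H_{1}(\varSigma_{n,1},\mathbb{Z})\right)\oplus H_{q}\!\left(\mathbf{\Gamma}_{n,1},\mathbb{Z}\right).
\]
To handle the twisted summand, I invoke Theorem \ref{thm:homologymcgtensorproduct} with $m=1$, valid in the stable range $2n\geq3q$, obtaining
\[
H_{q-1}\!\left(\mathbf{\Gamma}_{n,1},H_{1}(\varSigma_{n,1},\mathbb{Z})\right)\cong\bigoplus_{j=0}^{\lfloor(q-2)/2\rfloor}H_{q-2-2j}\!\left(\mathbf{\Gamma}_{n,1},\mathbb{Z}\right)=\bigoplus_{k=1}^{\lfloor q/2\rfloor}H_{q-2k}\!\left(\mathbf{\Gamma}_{n,1},\mathbb{Z}\right)
\]
after reindexing $k=j+1$. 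Gathering with the untwisted summand yields the formula for $s=0$.

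For the inductive step $s\to s+1$, I again apply Proposition \ref{cor:Firststepmcg}, which reduces the problem to computing the twisted stable homology $H_{q-1}(\mathbf{\Gamma}_{n,1}^{[s]},H_{1}(\varSigma_{n,1}^{s},\mathbb{Z}))$. I decompose the coefficient module as
\[
H_{1}(\varSigma_{n,1}^{s},\mathbb{Z})\cong H_{1}(\varSigma_{n,1},\mathbb{Z})\oplus\mathbb{Z}^{s},
\]
where $\mathbf{\Gamma}_{n,1}^{[s]}$ acts on the first factor via the forgetful map to $\mathbf{\Gamma}_{n,1}$ and trivially on the second (the $s$ marked points being individually fixed). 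The $H_{1}(\varSigma_{n,1},\mathbb{Z})$-twisted part is then reduced by iterating Proposition \ref{cor:Firststepmcg} down the tower $\mathbf{\Gamma}_{n,1}^{[s]}\to\mathbf{\Gamma}_{n,1}^{[s-1]}\to\cdots\to\mathbf{\Gamma}_{n,1}$, at which point Theorem \ref{thm:homologymcgtensorproduct} (for the appropriate tensor power of $H_{1}(\varSigma_{n,1},\mathbb{Z})$ arising from the iterations) closes the argument.

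The main obstacle is the combinatorial bookkeeping of the inductive step: one must verify that the contributions from all iterations, after reindexing, assemble precisely into the claimed even-shifted direct sum over $\mathbf{\Gamma}_{n,1}^{[s]}$, with the trivial $\mathbb{Z}^{s}$ factors conspiring against the odd-degree contributions from the twisted iterations. An alternative route that sidesteps the bookkeeping is to combine the base case with the K\"unneth-type decomposition recorded as Theorem \ref{thm:resultBodiTil}: the recursion then follows at once from the fact that $H_{*}((\mathbb{C}P^{\infty})^{\times(s+1)},\mathbb{Z})\cong H_{*}((\mathbb{C}P^{\infty})^{\times s},\mathbb{Z})\otimes_{\mathbb{Z}}H_{*}(\mathbb{C}P^{\infty},\mathbb{Z})$, with $H_{*}(\mathbb{C}P^{\infty},\mathbb{Z})$ supported in even degrees of rank one.
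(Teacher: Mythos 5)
First, a point of comparison: the paper does not prove this statement at all --- it is quoted from B\"{o}digheimer--Tillmann, who establish it by a stable splitting of classifying spaces, a method entirely different from anything in this paper. The internal result the paper actually proves is the recursive form, Theorem \ref{thmhomologypuremcg}, from which the present statement follows by an easy induction on $s$ using $H_{*}\left(\left(\mathbb{C}P^{\infty}\right)^{\times\left(s+1\right)},\mathbb{Z}\right)\cong H_{*}\left(\left(\mathbb{C}P^{\infty}\right)^{\times s},\mathbb{Z}\right)\underset{\mathbb{Z}}{\otimes}H_{*}\left(\mathbb{C}P^{\infty},\mathbb{Z}\right)$. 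This is essentially your ``alternative route'', except that as written it invokes the theorem being proved rather than Theorem \ref{thmhomologypuremcg}, so it is circular as stated; repaired, it simply pushes all the difficulty into the inductive step of Theorem \ref{thmhomologypuremcg}, which faces exactly the problem described below.

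The genuine gap in your direct induction is the claimed decomposition $H_{1}\left(\varSigma_{n,1}^{s},\mathbb{Z}\right)\cong H_{1}\left(\varSigma_{n,1},\mathbb{Z}\right)\oplus\mathbb{Z}^{s}$ as $\mathbf{\Gamma}_{n,1}^{\left[s\right]}$-modules. The kernel of the Birman map $\mathbf{\Gamma}_{n,1}^{\left[s+1\right]}\rightarrow\mathbf{\Gamma}_{n,1}^{\left[s\right]}$ is $\pi_{1}$ of the surface with the $s$ marked points \emph{removed}, free of rank $2n+s$, and its abelianization sits in an extension $0\rightarrow\mathbb{Z}^{s}\rightarrow H_{1}\rightarrow H_{1}\left(\varSigma_{n,1},\mathbb{Z}\right)\rightarrow0$ of $\mathbf{\Gamma}_{n,1}^{\left[s\right]}$-modules that does \emph{not} split. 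Indeed, the point-pushing element $P_{a_{1}}\in\mathbf{\Gamma}_{n,1}^{\left[1\right]}$ equals $T_{\gamma_{L}}T_{\gamma_{R}}^{-1}$ with $\left[\gamma_{L}\right]-\left[\gamma_{R}\right]=\pm c_{1}$, the class of the loop about the puncture, so it sends $b_{1}\mapsto b_{1}\pm c_{1}$ while acting trivially on the quotient $H_{1}\left(\varSigma_{n,1},\mathbb{Z}\right)$; an invariant complement of $\mathbb{Z}^{s}$ would force it to act trivially on everything. If your splitting were correct, your own recursion would give $H_{1}\left(\mathbf{\Gamma}_{n,1}^{\left[2\right]},\mathbb{Z}\right)\supseteq H_{0}\left(\mathbf{\Gamma}_{n,1}^{\left[1\right]},\mathbb{Z}\right)=\mathbb{Z}$, contradicting the statement you are proving (since $H_{1}\left(\left(\mathbb{C}P^{\infty}\right)^{\times2},\mathbb{Z}\right)=0$) as well as the perfectness of mapping class groups of genus $\geq3$ with marked points. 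So the deferred ``combinatorial bookkeeping'' is not the real obstacle: the coefficient modules do not decompose, and the cancellation of the would-be extra trivial summands is carried precisely by the non-split module structure, which your iteration cannot detect. Your base case ($s=1$, where the kernel is the unpunctured $\pi_{1}\left(\varSigma_{n,1},x\right)$ of rank $2n$ and no such extension arises) is correct and coincides with the paper's own derivation in that case.
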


\subsubsection{Automorphisms of free groups with boundaries\label{subsec:autFnwithboundaries}}

Let $\mathscr{G}_{n,k}$ denote the topological space consisting of
a wedge of $n\in\mathbb{N}$ circles together with $k$ distinguished
circles joined by arcs to the basepoint. For $s\in\mathbb{N}$, let
$\mathscr{G}_{n,k}^{s}$ be the space obtained from $\mathscr{G}_{n,k}$
by wedging $s-1$ edges at the basepoint. We denote by $\textrm{Htpy}_{*}\left(\mathscr{G}_{n,k}^{s};\partial\right)$
the space of homotopy equivalences of $\mathscr{G}_{n,k}$ that preserve
the basepoint and restrict to the identity on each of the $k$ distinguished
circles and the s basepoints. Let $A_{n,k}^{s}$ be the group of path-components
of $\textrm{Htpy}_{*}\left(\mathscr{G}_{n,k}^{s};\partial\right)$.
For instance, for $n$ a natural number and denoting by $\mathbf{F}_{n}$
the free group of rank $n$, then $A_{n,0}^{1}$ is isomorphic to
the automorphism group of $\mathbf{F}_{n}$ (denoted by $Aut\left(\mathbf{F}_{n}\right)$)
and $A_{n,0}^{2}$ is isomorphic to the holomorphs of the free group
$\mathbf{F}_{n}$. We refer the reader to \cite{HatcherWahlAutFnboundaries}
and \cite{jensenhol} for more details on these groups.
\begin{rem}
\label{rem:autfreewithboundquotient}We denote by $M_{n,k}^{s}$ the
connected sum $\left(\sharp_{n}\left(\mathbb{S}^{1}\times\mathbb{S}^{2}\right)\right)\sharp\left(\sharp_{k}\left(\mathbb{S}^{1}\times\mathbb{D}^{2}\right)\right)\sharp\left(\sharp_{s}\left(\mathbb{D}^{3}\right)\right)$
and by $\pi_{0}\textrm{Diff}\left(M_{n,k}^{s}\right)$ the isotopy
classes of diffeomorphisms of $M_{n,k}^{s}$ (a.k.a. the mapping class
group of $M_{n,k}^{s}$). Recall from \cite[Theorem 1.1]{HatcherWahlAutFnboundaries}
that there is a short exact sequence
\[
\xymatrix{1\ar@{->}[r] & K_{n,k}^{s}\ar@{->}[r] & \pi_{0}\textrm{Diff}\left(M_{n,k}^{s}\right)\ar@{->}[r] & A_{n,k}^{s}\ar@{->}[r] & 1}
,
\]
where $K_{n,k}^{s}$ is generated by Dehn twists along embedded 2
spheres (and is a product of at most $n+k+s$ copies of $\mathbb{Z}/2\mathbb{Z}$).
Hence the automorphism of free groups with boundaries $A_{n,k}^{s}$
can be viewed as a quotient of the mapping class group of the $3$-manifold
$M_{n,k}^{s}$.
\end{rem}

For $k,n\in\mathbb{N}$, we denote by $Aut_{n,k}$ the subgroup of
$Aut\left(\mathbf{F}_{n+k}\right)$ of automorphisms that take each
of the last $k$ generators to a conjugate of itself. Denoting by
$\textrm{Htpy}_{*}\left(\mathscr{G}_{n,k};\left[\partial\right]\right)$
the space of homotopy equivalences of $\mathscr{G}_{n,k}$ that preserve
the basepoint and fixing the $k$ distinguished circles up to a rotation,
then $Aut_{n,k}$ is the group of path-components of $\textrm{Htpy}_{*}\left(\mathscr{G}_{n,k};\left[\partial\right]\right)$.
Restricting the elements of $\textrm{Htpy}_{*}\left(\mathscr{G}_{n,k};\left[\partial\right]\right)$
to their rotations of the $k$ distinguished circles defines a fibration
$\textrm{Htpy}_{*}\left(\mathscr{G}_{n,k};\partial\right)\rightarrow\textrm{Htpy}_{*}\left(\mathscr{G}_{n,k};\left[\partial\right]\right)\rightarrow\left(\mathbb{S}^{1}\right)^{k}$.
The homotopy long exact sequence associated with this fibration provides
an exact sequence

\[
\xymatrix{\mathbb{Z}^{k}\ar@{->}[r] & A_{n,k}\ar@{->}[r] & Aut_{n,k}\ar@{->}[r] & 1}
\]
(actually, by \cite[ Section 2]{JensenWahl}, the left hand map is
injective and this is a short exact sequence), which gives a surjective
map $A_{n,k}\twoheadrightarrow Aut_{n,k}$. For $k,n\in\mathbb{N}$,
we denote by $a_{A_{n,k}}$ the composition $A_{n,k}^{1}\rightarrow A_{n,k}\twoheadrightarrow Aut_{n,k}\hookrightarrow Aut\left(\mathbf{F}_{n+k}\right)$
where the map $A_{n,k}^{1}\rightarrow A_{n,k}$ forgets the basepoint.
We recall the following useful result:
\begin{lem}
\cite{HatcherWahlAutFnboundaries}\label{lem:splittingholomorphs}
Let $n$, $k$ and $s\geq2$ be natural numbers. There is a split
short exact sequence
\begin{equation}
\xymatrix{1\ar@{->}[r] & \mathbf{F}_{n+k}\ar@{->}[r] & A_{n,k}^{s}\ar@{->}[r] & A_{n,k}^{s-1}\ar@{->}[r] & 1,}
\label{eq:splitseshol}
\end{equation}
where the map $A_{n,k}^{s}\rightarrow A_{n,k}^{s-1}$ forgets the
last basepoint and a fortiori $A_{n,k}^{s}\cong\left(\mathbf{F}_{n+k}\right)^{s-1}\rtimes A_{n,k}^{1}$
where $A_{n,k}^{1}$ acts diagonally on $\left(\mathbf{F}_{n+k}\right)^{s-1}$
via the map $a_{A_{n,k}}:A_{n,k}^{1}\rightarrow Aut\left(\mathbf{F}_{n+k}\right)$.
\end{lem}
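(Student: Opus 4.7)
The plan is to deduce the result as a graph-theoretic analogue of the Birman short exact sequence from Theorem \ref{thm:Birmansesorient}, by analysing a fibration of spaces of homotopy equivalences induced by forgetting the last basepoint of $\mathscr{G}_{n,k}^{s}$. Concretely, I would denote by $\mathrm{hAut}^{\partial}(\mathscr{G}_{n,k}^{s})$ the space of homotopy equivalences fixing the $k$ distinguished circles and the $s$ basepoints, and consider the evaluation fibration obtained by forgetting the $s$-th basepoint, whose base is $\mathrm{hAut}^{\partial}(\mathscr{G}_{n,k}^{s-1})$ and whose fibre has path-components controlled by $\pi_{1}(\mathscr{G}_{n,k}^{s-1})$. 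Passing to the long exact sequence of homotopy groups and exploiting the path-connectedness of $\mathscr{G}_{n,k}^{s-1}$ yields
\[
\pi_{1}\!\left(\mathscr{G}_{n,k}^{s-1}\right) \longrightarrow A_{n,k}^{s} \longrightarrow A_{n,k}^{s-1} \longrightarrow 1,
\]
in which the leftmost map is the point-pushing homomorphism dragging the $s$-th basepoint along a chosen loop, and $\pi_{1}(\mathscr{G}_{n,k}^{s-1}) \cong \mathbf{F}_{n+k}$ since the extra wedged edges are contractible.

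Second, I would verify the injectivity of the point-pushing map $\mathbf{F}_{n+k} \hookrightarrow A_{n,k}^{s}$, which is the place where the hypothesis $s \geq 2$ is essential: the presence of at least one further fixed basepoint obstructs a nontrivial loop from being realised by the identity homotopy equivalence. The required input is the contractibility of the identity component of the space of pointed self-homotopy-equivalences of $\mathscr{G}_{n,k}^{s-1}$, in the same spirit as the use of \cite[Th\'eor\`eme 5]{gramain1973type} in the proof of Lemma \ref{lem:splitting}; this is the graph analogue of the classical Birman injectivity argument and is worked out in \cite{HatcherWahlAutFnboundaries}.

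Third, I would construct a splitting geometrically: given a homotopy equivalence of $\mathscr{G}_{n,k}^{s-1}$ fixing the $s-1$ basepoints, I extend it by the identity on the extra $s$-th edge wedged at the original basepoint, producing a homotopy equivalence of $\mathscr{G}_{n,k}^{s}$ fixing the $s$ basepoints. This defines a section $A_{n,k}^{s-1} \to A_{n,k}^{s}$ of the forgetful map, and a direct inspection shows that the induced action of $A_{n,k}^{s-1}$ on the normal subgroup $\mathbf{F}_{n+k}$ is given by conjugation in $\pi_{1}(\mathscr{G}_{n,k}^{s-1})$, hence factors through the forgetful map $A_{n,k}^{s-1} \to A_{n,k}^{1}$ followed by $a_{A_{n,k}}$.

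Finally, iterating the split short exact sequence from $s$ down to $1$, and using that the point-pushing homomorphisms associated to distinct extra basepoints commute inside $A_{n,k}^{s}$, produces the semidirect product decomposition $A_{n,k}^{s} \cong (\mathbf{F}_{n+k})^{s-1} \rtimes A_{n,k}^{1}$ with $A_{n,k}^{1}$ acting diagonally through $a_{A_{n,k}}$. The main obstacle will be establishing the injectivity of the point-pushing map: this is precisely the place where the hypothesis $s \geq 2$ is needed and where a careful fibration-and-contractibility analysis in the space of homotopy equivalences of graphs, in the spirit of \cite{HatcherWahlAutFnboundaries}, is required.
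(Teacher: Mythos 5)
The paper does not prove this lemma: it is recalled verbatim from \cite{HatcherWahlAutFnboundaries}, so there is no internal proof to compare against. Your sketch is a faithful reconstruction of the standard argument behind the cited result, and its overall architecture (fibration of spaces of homotopy equivalences, long exact sequence of homotopy groups, point-pushing kernel, extension-by-the-identity splitting, identification of the action with $a_{A_{n,k}}$, iteration) is correct. Two places deserve more care than you give them. First, your ``evaluation fibration'' is described ambiguously: the total space and base live over different graphs ($\mathscr{G}_{n,k}^{s}$ versus $\mathscr{G}_{n,k}^{s-1}$), so you must either fibre over the graph itself by evaluating at the $s$-th basepoint and then identify $\pi_{0}$ of the space of equivalences fixing only $s-1$ basepoints with $A_{n,k}^{s-1}$ (by collapsing the contractible extra edge), or set up the restriction fibration directly; as written the base and fibre you name do not belong to the same fibration. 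Second, for the injectivity of the point-pushing map you invoke contractibility of an identity component, which is the heavy input; a more elementary route, available precisely because $s\geq2$, is to observe that pushing the $s$-th basepoint along a loop $\gamma$ changes the homotopy class of a path from the first basepoint to the $s$-th one by $\gamma$, and this path class is an invariant of elements of $A_{n,k}^{s}$ since both endpoints are fixed. Finally, the commutativity of point-pushes at distinct extra basepoints, which you use to iterate the extension into the product decomposition $\left(\mathbf{F}_{n+k}\right)^{s-1}\rtimes A_{n,k}^{1}$, is asserted without justification; it holds because the pushes are supported on disjoint wedged edges, but it should be said. None of these is a fatal gap, and the proposal is consistent with the source the paper cites.
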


Let $k$ and $s$ be fixed natural numbers. Let $\mathfrak{A}_{s,k}$
be the groupoid with the spaces $\mathscr{G}_{n,k}^{s}$ as its objects
and $A_{n,k}^{s}$ as automorphism groups for all natural numbers.
For $s=1$ and $k=0$, $\mathfrak{A}_{1,0}$ is the maximal subgroupoid
of the category $\mathfrak{gr}$ of finitely generated free groups
and we denote by $i:\mathfrak{A}_{1,0}\rightarrow\mathbf{\mathfrak{gr}}$
the inclusion functor. Moreover the coproduct $*$ induces a strict
symmetric monoidal structure $\left(\mathfrak{A}_{1,0},*,0_{\mathfrak{Gr}}\right)$
by restriction and we thus define the symmetric monoidal category
$\left(\mathfrak{U}\mathfrak{A}_{1,0},*,0_{\mathfrak{Gr}}\right)$.
The functor $i$ lifts to the category $\mathfrak{U}\mathfrak{A}_{1,0}$
by sending a morphism $\left[\mathbf{F}_{n_{2}-n_{1}},g\right]:\mathbf{F}_{n_{1}}\rightarrow\mathbf{F}_{n_{2}}$
of $\mathfrak{U}\mathfrak{A}_{1,0}$ (where $g\in Aut\left(\mathbf{F}_{n_{2}}\right)$)
to the morphism $g\circ\left(\iota_{\mathbf{F}_{n_{2}-n_{1}}}*id_{\mathbf{F}_{n_{1}}}\right):\mathbf{F}_{n_{1}}\hookrightarrow\mathbf{F}_{n_{2}}$
of $\mathfrak{gr}$: since $f\circ\iota_{\mathbf{F}_{n}}=\iota_{\mathbf{F}_{n}}$
for all $f\in Aut\left(\mathbf{F}_{n}\right)$, the relation (\ref{eq:criterion'})
is trivially satisfied and the result follows from Lemma \ref{lem:criterionfamilymorphismsfunctor}.
Then $i$ is a faithful functor and $\mathfrak{U}\mathfrak{A}_{1,0}$
can be seen as a subcategory of $\mathbf{\mathfrak{gr}}$.

Let $k$ and $s\geq1$ be natural numbers. Precomposing by the surjection
$A_{n,k}^{s}\twoheadrightarrow A_{n,k}^{s-1}\twoheadrightarrow\cdots\twoheadrightarrow A_{n,k}^{1}$,
the morphisms $\left\{ a_{A_{n,k}}\right\} _{n\in\mathbb{N}}$ assemble
to define a functor $\mathcal{A}_{\mathfrak{A}_{s,k}}:\mathfrak{A}_{s,k}\rightarrow\mathfrak{Gr}$
such that $\mathcal{A}_{\mathfrak{A}_{s,k}}\left(n\right)=\mathbf{F}_{n+k}$
for all natural numbers $n$. Furthermore, we recall the stable homology
result for automorphism groups of free groups due to Galatius for
constant coefficients and Djament and Vespa for twisted coefficients:
\begin{thm}
\label{thm:resultsstablehomologyAut(Fn)} Let $q\geq1$ be a natural
number. Then:

\begin{itemize}
\item \cite{Galatius} for $n\geq2q+1$, $H_{q}\left(Aut\left(\mathbf{F}_{n}\right),\mathbb{Q}\right)=0$;
\item \cite[Théorème 1]{DV2} for $F:\mathfrak{gr}\rightarrow\mathbf{Ab}$
a strong polynomial functor such that $F\left(0_{\mathfrak{Gr}}\right)=0_{\mathfrak{Gr}}$,
then $\underset{n\in\mathbb{N}}{\textrm{Colim}}\left(H_{q}\left(Aut\left(\mathbf{F}_{n}\right),F\left(n\right)\right)\right)=0$.
\end{itemize}
\end{thm}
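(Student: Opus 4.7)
The final statement collects two results already in the literature, so my plan is essentially to invoke them rather than to reprove them from scratch; nevertheless I can outline the strategy each of them rests on. For the first bullet, I would appeal directly to Galatius' theorem, which identifies $\mathbb{Z}\times B\,\text{Aut}(\mathbf{F}_\infty)^+$ with $\Omega^{\infty}S^{\infty}$ up to weak equivalence, using scanning and group completion. Combining this identification with the classical fact that the rational homology of $QS^0$ vanishes in positive degrees (equivalently, the stable rational homotopy groups of spheres are concentrated in degree zero) immediately yields $H_q(\text{Aut}(\mathbf{F}_\infty);\mathbb{Q})=0$ for $q\geq 1$. The unstable range $n\geq 2q+1$ is then produced by the Hatcher--Vogtmann--Wahl homological stability theorem for $\text{Aut}(\mathbf{F}_n)$, applied with constant rational coefficients.

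For the second bullet, my plan would be to invoke the Djament--Vespa computation. Their strategy is to reinterpret the colimit $\underset{n}{\text{Colim}}\,H_q(\text{Aut}(\mathbf{F}_n),F(n))$ as functor homology over the symmetric monoidal category $(\mathfrak{gr},*,0_{\mathfrak{Gr}})$, and then to use a decomposition result (in the spirit of Theorem $C$ of the introduction) that separates the constant-coefficient piece $H_k(\text{Aut}(\mathbf{F}_\infty),\mathbb{Z})$ from the functor-homology contribution $H_l(\mathfrak{gr},F)$. For $F$ a reduced polynomial functor, one shows that the functor-homology piece $H_l(\mathfrak{gr},F)$ itself vanishes, using an analysis of polynomial functors on $\mathfrak{gr}$ combined with the vanishing provided by Galatius (integrally after rationalization in Djament--Vespa's actual argument). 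The reducedness assumption $F(0_{\mathfrak{Gr}})=0$ is exactly what is needed to kill the contribution coming from the constant coefficients.

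The main obstacle to reproducing either statement from first principles within this paper would be the homotopy-theoretic input: Galatius' identification is a genuinely deep result relying on scanning map technology and group completion, while the Djament--Vespa vanishing uses non-trivial functor-homology machinery for $\mathfrak{gr}$. Since both ingredients are by now well established and are only used as black-box inputs for the later arguments of Section \ref{subsec:autFnwithboundaries}, the pragmatic plan is simply to cite them, which is exactly the approach taken.
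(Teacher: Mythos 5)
Your proposal matches the paper exactly: Theorem \ref{thm:resultsstablehomologyAut(Fn)} is stated without proof as a recollection of results from the literature, namely Galatius' theorem and Djament--Vespa's vanishing theorem, and citing them as black boxes is precisely what the paper does. Your background sketches of the two cited proofs are reasonable but play no role in the paper's argument, so there is nothing further to check.
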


Hence, we can establish the main result of Section \ref{subsec:autFnwithboundaries}.
\begin{thm}
\label{thm:resultAutFnwithbound}Let $s\geq2$ and $q\geq1$ be natural
numbers.

\begin{enumerate}
\item Let $F:\mathfrak{gr}\rightarrow\mathbf{Ab}$ be a strong polynomial
functor such that $F\left(0_{\mathfrak{Gr}}\right)=0_{\mathfrak{Gr}}$.
The action of $A_{n,0}^{s}$ on $F\left(n\right)$ is induced by the
surjections $A_{n,0}^{s}\twoheadrightarrow A_{n,0}^{s-1}\twoheadrightarrow\cdots\twoheadrightarrow A_{n,0}^{1}.$
Then
\[
\underset{n\in\mathbb{N}}{\textrm{Colim}}\left(H_{q}\left(A_{n,0}^{s},F\left(n\right)\right)\right)=0.
\]
\item For all  natural numbers $n\geq2q+2$ and $k\geq0$, $H_{q}\left(A_{n,k}^{s},\mathbb{Q}\right)=0$.
\end{enumerate}
\end{thm}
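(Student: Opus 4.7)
The plan is to treat both parts by iterating the split short exact sequence of Lemma \ref{lem:splittingholomorphs} through Corollary \ref{cor:bigcor}, peeling off the free-group kernels until we land on a group for which known vanishing results apply.

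For part (1), I proceed by induction on $s \geq 1$. The base case $s = 1$ identifies $A_{n,0}^{1} \cong Aut(\mathbf{F}_n)$, so the Djament--Vespa stable vanishing (second bullet of Theorem \ref{thm:resultsstablehomologyAut(Fn)}) yields the conclusion directly. For the induction step, the splitting $A_{n,0}^{s} \cong \mathbf{F}_n \rtimes A_{n,0}^{s-1}$ fits the framework of Corollary \ref{cor:bigcor}: the kernel $\mathbf{F}_n$ acts trivially on $F(n)$ because the $A_{n,0}^{s}$-action on $F(n)$ factors through the iterated surjection $A_{n,0}^{s} \twoheadrightarrow A_{n,0}^{1}$. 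This yields the isomorphism
\[
H_q\bigl(A_{n,0}^{s}, F(n)\bigr) \cong H_{q-1}\bigl(A_{n,0}^{s-1}, H_1(\mathbf{F}_n, \mathbb{Z}) \otimes F(n)\bigr) \oplus H_q\bigl(A_{n,0}^{s-1}, F(n)\bigr).
\]
By Lemma \ref{lem:H1tensorm} combined with Lemma \ref{cor:increasepoly}, the new coefficient system $n \mapsto H_1(\mathbf{F}_n, \mathbb{Z}) \otimes F(n)$ is again polynomial and still vanishes on the trivial group, so the induction hypothesis applies to both summands once we pass to colimits.

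For part (2), I iterate Corollary \ref{cor:bigcor} exactly $s-1$ times along the chain of splittings $A_{n,k}^{t} \cong \mathbf{F}_{n+k} \rtimes A_{n,k}^{t-1}$ for $t = s, s-1, \dots, 2$, starting from the constant coefficient $\mathbb{Q}$. At each stage the action of the relevant $\mathbf{F}_{n+k}$-kernel on the polynomial tensor-power coefficients accumulated so far is trivial, by the same factorisation argument as in part (1), so the iteration is legitimate. The outcome decomposes $H_q(A_{n,k}^{s}, \mathbb{Q})$ as a finite direct sum of groups of the form
\[
H_l\bigl(A_{n,k}^{1}, H_1(\mathbf{F}_{n+k}, \mathbb{Q})^{\otimes m}\bigr), \qquad l + m \leq q.
\]
It then remains to show that each such summand vanishes in the range $n \geq 2q+2$. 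For $k = 0$, the $m = 0$ summand vanishes by Galatius' theorem (first bullet of Theorem \ref{thm:resultsstablehomologyAut(Fn)}), and the $m \geq 1$ summands vanish by combining the Djament--Vespa stable vanishing with the Hatcher--Wahl homological stability results of \cite{HatcherWahl2}. The case $k > 0$ reduces to $k = 0$ via the same stability results of \cite{HatcherWahl2}, which transport the vanishing from $A_{n,0}^{1} \cong Aut(\mathbf{F}_n)$ to the family $A_{n,k}^{1}$ in the stable range.

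The main obstacle lies in controlling the numerical stable range in part (2): to reach the sharp bound $n \geq 2q+2$, one must verify that the stability slope of \cite{HatcherWahl2} combines compatibly with the Galatius range $n \geq 2q+1$ and is not degraded by the iterative semidirect product decomposition or by the tensor powers of $H_1(\mathbf{F}_{n+k}, \mathbb{Q})$ appearing as twisted coefficients.
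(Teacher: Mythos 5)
Your proposal is correct and follows essentially the same route as the paper: iterate Corollary \ref{cor:bigcor} along the split extensions of Lemma \ref{lem:splittingholomorphs}, keep control of polynomiality via Proposition \ref{prop:precompositionstrongmono} and Lemma \ref{cor:increasepoly}, and conclude with Theorem \ref{thm:resultsstablehomologyAut(Fn)} together with the Hatcher--Wahl stability in $k$. The only (cosmetic) difference is in part (2), where the paper first applies the $k$-stabilization to $H_{q}\left(A_{n,k}^{s},\mathbb{Q}\right)$ with \emph{constant} coefficients to reduce to $k=0$ and only then decomposes, which sidesteps the need for twisted stability in $k$ for the summands $H_{l}\left(A_{n,k}^{1},H_{1}\left(\mathbf{F}_{n+k},\mathbb{Q}\right)^{\otimes m}\right)$ that your ordering would require.
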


\begin{proof}
Since $i:\mathfrak{U}\mathfrak{A}_{1,0}\rightarrow\mathbf{\mathfrak{gr}}$
is faithful and the monoidal structure of $\mathfrak{U}\mathfrak{A}_{1,0}$
is induced from the one of $\mathbf{\mathfrak{gr}}$, $i$ is strong
monoidal. Recall from Remark \ref{rem:caseunitterminal} that, since
the trivial group $0_{\mathfrak{Gr}}$ is a terminal object of $\mathfrak{gr}$,
$F$ is very strong polynomial. We then deduce from Proposition \ref{prop:precompositionstrongmono}
that the functor $F\circ i:\mathfrak{U}\mathfrak{A}_{1,0}\rightarrow\mathbf{Ab}$
is very strong polynomial. It follows from Lemma \ref{cor:increasepoly}
that $H_{1}\left(\mathbf{F}_{-},\mathbb{Q}\right)\underset{\mathbb{Q}}{\otimes}F\left(-\right):\mathfrak{U}\mathfrak{A}_{1,0}\overset{i}{\rightarrow}\mathfrak{gr}\rightarrow\mathbf{Ab}$
is a strong polynomial functor. Hence, the first result follows from
Corollary \ref{cor:bigcor} and Theorem \ref{thm:resultsstablehomologyAut(Fn)}.

In \cite[Theorem 1.1]{HatcherWahl2}, Hatcher and Wahl prove that
the stabilization morphism $A_{n,k}^{s}\rightarrow A_{n,k+1}^{s}$
induces an isomorphism for the rational homology $H_{q}\left(A_{n,k}^{s},\mathbb{Q}\right)\overset{\sim}{\rightarrow}H_{q}\left(A_{n,k+1}^{s},\mathbb{Q}\right)$
if $n\geq3q+3$. The second result thus follows from the previous
statement.
\end{proof}
\begin{rem}
For $k=0$ and $s=2$, Theorem \ref{thm:resultAutFnwithbound} recovers
the results \cite[Theorem 1.2 (b) and (c)]{jensenhol} due to Jensen.
\end{rem}

\section{Twisted stable homologies for $FI$-modules\label{sec:FI-Mod}}

In this section, we present a general principle to compute the twisted
stable homology for mapping class groups with non-trivial finite quotient
groups. First, we give a general decomposition for the twisted stable
homology using functor homology Section \ref{sec:A-general-result}.
Then, we can establish in Theorem \ref{cor:Mainresultparticularkernels}
a general formula to compute the stable homology with twisted coefficients
given by functors over categories associated with the aforementioned
finite quotient groups in Section \ref{subsec:Generalpropertykernels}.
This allows one to set explicit formulas for the stable homology with
coefficients given by $FI$-modules for braid groups, mapping class
groups of orientable surfaces and some particular right-angled Artin
groups in Section \ref{subsec:Applications-1}. \textbf{Throughout
Section \ref{sec:FI-Mod}, we fix $\mathbb{K}$ a field.}

\subsection{General decomposition for the twisted stable homology using functor
homology\label{sec:A-general-result}}

In this first subsection, we prove a decomposition result for the
stable homology with twisted coefficients for families of groups whose
associated groupoid is braided strict monoidal and satisfies the assumptions
of Theorem \ref{thm:consequencecondcancalandinject} (see Theorem
\ref{thm:step1}). It extends a previous analogous result due Djament
and Vespa in \cite[Section 1 and 2]{DV1} when the ambient monoidal
structure is symmetric. It will be a key step to prove Theorem \ref{cor:Mainresultparticularkernels}.
We refer the reader to the papers \cite[Section 2]{FranjouPira} and
\cite[Appendice A]{DV1} for an introduction to homological algebra
in functor categories and we assume that all the definitions, properties
and results there are known.

\textbf{Throughout Section \ref{sec:A-general-result}, we consider
$\left(\mathcal{G},\natural,0\right)$ a small braided strict monoidal
groupoid, with objects indexed by the natural numbers.} An object
of $\mathcal{G}$ is thus denoted by $\underline{n}$, where $n$
is its corresponding indexing natural number. We denote the automorphism
group $Aut_{\mathcal{G}}\left(\underline{n}\right)$ by $G_{n}$ and
assume that there are no morphisms between distinct objects. These
groups define a family of groups $G_{-}:\left(\mathbb{N},\leq\right)\rightarrow\mathfrak{Gr}$
such that $G_{-}\left(\gamma{}_{n}\right):G_{n}\rightarrow G_{n+1}$
is the injective group morphism which sends $\varphi\in G_{n}$ to
$\varphi\natural id_{\underline{1}}$.

We assume that $\mathcal{G}$ has no zero divisors, that $Aut_{\mathcal{G}}(0)=\left\{ id_{0}\right\} $
and that it satisfies the properties $\mathbf{\left(C\right)}$ and
$\mathbf{\left(I\right)}$ of Definition \ref{def:conditionscancelandinject}.
By Theorem \ref{thm:consequencecondcancalandinject}, the monoidal
structure $\natural$ extends to Quillen's bracket construction and
defines pre-braided homogeneous category $\left(\mathfrak{U}\mathcal{G},\natural,0\right)$.
Also, the unit $0$ is an initial object in $\mathfrak{U}\mathcal{G}$
and we recall that $\iota_{\underline{n}}=\left[\underline{n},id_{\underline{n}}\right]:0\rightarrow\underline{n}$
denotes the unique morphism in $\mathfrak{U}\mathcal{G}$ from $0$
to $\underline{n}$. Hence, we have canonical morphisms $id_{\underline{n}}\natural\iota_{\underline{n'-n}}:\underline{n}\rightarrow\underline{n'}$
in $\mathfrak{U}\mathcal{G}$, for all natural numbers $n$ and $n'$
such that $n'\geq n$.

We fix $F$ an object of $\mathbf{Fct}\left(\mathfrak{U}\mathcal{G},\mathbb{K}\textrm{-}\mathfrak{Mod}\right)$.
Our goal is to compute the stable homology of the family of groups
$G_{-}$ with coefficients given by $F$. Namely, we are interested
in the computation of
\[
\underset{n\in\left(\mathbb{N},\leq\right)}{\textrm{Colim}}\left(H_{*}\left(G_{n},F\left(\underline{n}\right)\right)\right)
\]
the colimit of the homology groups $H_{*}\left(G_{n},F\left(\underline{n}\right)\right)$
with respect to the morphisms
\[
H_{*}\left(G_{-}\left(\gamma{}_{n}\right),F\left(id_{\underline{n}}\natural\iota_{\underline{1}}\right)\right):H_{*}\left(G_{n},F\left(\underline{n}\right)\right)\rightarrow H_{*}\left(G_{n+1},F\left(\underline{n+1}\right)\right)
\]
induced by the functoriality in two variables of group homology with
respect to the morphisms $G_{-}\left(\gamma{}_{n}\right)$ and $F\left(id_{\underline{n}}\natural\iota_{\underline{1}}\right)$).
\begin{notation}
We denote by $G_{\infty}$ the colimit of the groups $G_{n}$ with
respect to the morphisms $G_{-}\left(\gamma{}_{n}\right)$ and by
$G_{-}\left(\gamma{}_{\infty}\right):G_{n}\rightarrow G_{\infty}$
the associated canonical group morphism. Let $F_{\infty}$ be the
colimit of the $G_{n}$-modules $F\left(\underline{n}\right)$ with
respect to the morphisms $F\left(id_{\underline{n}}\natural\iota_{\underline{1}}\right)$.
Let $H_{*}\left(G_{\infty},F_{\infty}\right)$ be the homology group
of the colimit $G_{\infty}$ with coefficient in the colimit $F_{\infty}$:
it coincides with $\underset{n\in\left(\mathbb{N},\leq\right)}{\textrm{Colim}}\left(H_{*}\left(G\left(\underline{n}\right),F\left(\underline{n}\right)\right)\right)$
(since group homology commutes with filtered colimits).
\end{notation}

As categories with one object, the groups $\left\{ G_{n}\right\} _{n\in\mathbb{N}}$
are subcategories of $\mathfrak{U}\mathcal{G}$: for each natural
number $n$, we have a canonical faithful functor $\mathbf{i}_{G_{n}}:G_{n}\rightarrow\mathfrak{U}\mathcal{G}$.
We denote by $\Pi:G_{\infty}\times\mathfrak{U}\mathcal{G}\rightarrow\mathfrak{U}\mathcal{G}$
the projection functor and by $\Pi^{*}$ the precomposition by $\Pi$.
Thanks to the functoriality of the homology of categories, we define
for each natural number $n$ a morphism
\[
\varPsi_{F,n}:H_{*}\left(G_{n},F\left(n\right)\right)\rightarrow H_{*}\left(G_{\infty}\times\mathfrak{U}\mathcal{G},\Pi^{*}F\right)
\]
induced by restriction along the functor $\left(G_{-}\left(\gamma{}_{\infty}\right)\times\mathbf{i}_{G_{n}}\right)\circ\Delta_{G_{n}}:G_{n}\rightarrow G_{\infty}\times\mathfrak{U}\mathcal{G}$,
where $\Delta_{G_{n}}:G_{n}\rightarrow G_{n}\times G_{n}$ is the
diagonal functor. Then it formally follows from the previous definitions
that for all natural numbers $n$:
\[
H_{*}\left(G_{-}\left(\gamma{}_{n}\right),F\left(id_{\underline{n}}\natural\iota_{\underline{1}}\right)\right)\circ\varPsi_{F,n+1}=\varPsi_{F,n}.
\]
A fortiori the morphisms $\left\{ \varPsi_{F,n}\right\} _{n\in\mathbb{N}}$
are natural with respect to $\left(\mathbb{N},\leq\right)$. Hence,
the colimit with respect to $\left(\mathbb{N},\leq\right)$ defines
a unique morphism:
\[
\varPsi_{F}:H_{*}\left(G_{\infty},F_{\infty}\right)\rightarrow H_{*}\left(G_{\infty}\times\mathfrak{U}\mathcal{G},\Pi^{*}F\right).
\]
Let us state the main result of this section.
\begin{thm}
\label{thm:step1}Let $\mathbb{K}$ be a field and $\left(\mathfrak{U}\mathcal{G},\natural,0\right)$
be a pre-braided homogeneous category as detailed before. For all
functors $F:\mathfrak{U}\mathcal{G}\rightarrow\mathbb{K}\textrm{-}\mathfrak{Mod}$,
the morphism $\varPsi_{F}$ is a $\mathbb{K}$-modules isomorphism.
Moreover, $\varPsi_{F}$ decomposes as a natural isomorphism: 
\[
H_{*}\left(G_{\infty},F_{\infty}\right)\cong\underset{k+l=*}{\bigoplus}\left(H_{k}\left(G_{\infty},\mathbb{K}\right)\underset{\mathbb{K}}{\otimes}H_{l}\left(\mathfrak{U}\mathcal{G},F\right)\right).
\]
\end{thm}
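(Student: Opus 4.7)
The plan is to reduce the theorem to showing that $\varPsi_F$ is an isomorphism, and then establish the latter by d\'evissage. Since $\mathbb{K}$ is a field and $\Pi^{*}F$ is the external tensor product of the trivial $G_{\infty}$-module $\mathbb{K}$ with $F$ on $\mathfrak{U}\mathcal{G}$, the K\"unneth formula for the homology of a product of small categories yields
\[
H_{*}\left(G_{\infty}\times\mathfrak{U}\mathcal{G},\Pi^{*}F\right)\cong\underset{k+l=*}{\bigoplus}\left(H_{k}\left(G_{\infty},\mathbb{K}\right)\underset{\mathbb{K}}{\otimes}H_{l}\left(\mathfrak{U}\mathcal{G},F\right)\right).
\]
Hence the asserted decomposition follows automatically once $\varPsi_{F}$ is shown to be a $\mathbb{K}$-modules isomorphism for every $F$.

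To prove $\varPsi_{F}$ is an isomorphism, I would use d\'evissage in $F$. Both the source and the target of $\varPsi_{F}$, viewed as functors of $F$, are homological $\delta$-functors on $\mathbf{Fct}\left(\mathfrak{U}\mathcal{G},\mathbb{K}\textrm{-}\mathfrak{Mod}\right)$ which commute with filtered colimits (this holds for both group homology and category homology), and $\varPsi$ is a morphism of $\delta$-functors. The functor category has enough projectives, generated by the representables $P^{\underline{m}}=\mathbb{K}\left[Hom_{\mathfrak{U}\mathcal{G}}\left(\underline{m},-\right)\right]$ and their direct sums. Applying a projective resolution of $F$ together with the resulting hyperhomology spectral sequences and the five-lemma, it suffices to prove that $\varPsi_{P^{\underline{m}}}$ is an isomorphism for every $\underline{m}$.

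For the representable case, Lemma \ref{rem:Homogenousgroupset} gives $P^{\underline{m}}\left(\underline{n}\right)\cong\mathbb{K}\left[G_{n}/G_{n-m}\right]$ as a left $G_{n}$-module for $n\geq m$, where $G_{n-m}$ embeds in $G_{n}$ via $g\longmapsto g\natural id_{\underline{m}}$. Shapiro's lemma yields $H_{*}\left(G_{n},P^{\underline{m}}\left(\underline{n}\right)\right)\cong H_{*}\left(G_{n-m},\mathbb{K}\right)$, and passing to the colimit gives $H_{*}\left(G_{\infty},\left(P^{\underline{m}}\right)_{\infty}\right)\cong H_{*}\left(G_{\infty},\mathbb{K}\right)$. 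On the other side, $P^{\underline{m}}$ is projective in the functor category (by Yoneda, $F\longmapsto F\left(\underline{m}\right)$ is exact), so $H_{l}\left(\mathfrak{U}\mathcal{G},P^{\underline{m}}\right)=0$ for $l>0$ while $H_{0}\left(\mathfrak{U}\mathcal{G},P^{\underline{m}}\right)=\underset{\mathfrak{U}\mathcal{G}}{colim}\,P^{\underline{m}}\cong\mathbb{K}$ (the colimit of a representable functor is a point). Combined with the first paragraph, the target of $\varPsi_{P^{\underline{m}}}$ equals $H_{*}\left(G_{\infty},\mathbb{K}\right)$ as well.

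The main obstacle I foresee is the naturality bookkeeping specific to the pre-braided (as opposed to symmetric) setting. Unlike the case treated in \cite{DV1}, the stabilization $-\natural id_{\underline{1}}:G_{n}\hookrightarrow G_{n+1}$ is not symmetric under the monoidal structure, so one must carefully verify that the Shapiro identifications commute with the structure maps $F\left(id_{\underline{n}}\natural\iota_{\underline{1}}\right)$ used to form $\left(P^{\underline{m}}\right)_{\infty}$, and that the resulting abstract isomorphism is realised by $\varPsi_{P^{\underline{m}}}$ itself. The relation $b_{A,B}^{\mathfrak{C}}\circ\left(id_{A}\natural\iota_{B}\right)=\iota_{B}\natural id_{A}$ of Definition \ref{def:defprebraided} is exactly the ingredient making iota-morphisms commute with braidings in the required way, ensuring both the well-definedness and the naturality of $\varPsi$; once this compatibility is in place, the d\'evissage above concludes the proof.
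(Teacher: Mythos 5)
Your proposal is correct and follows essentially the same route as the paper: dévissage over the projective generators $P_{\underline{n}}^{\mathfrak{U}\mathcal{G}}=\mathbb{K}\left[Hom_{\mathfrak{U}\mathcal{G}}\left(\underline{n},-\right)\right]$ using that $\varPsi$ is a morphism of $\delta$-functors commuting with filtered colimits, then the identification $Hom_{\mathfrak{U}\mathcal{G}}\left(\underline{n},\underline{m}\right)\cong G_{m}/G_{m-n}$ from Lemma \ref{rem:Homogenousgroupset}, Shapiro's lemma, and the K\"unneth spectral sequence for the product of categories. The only cosmetic difference is that you compute the two sides of $\varPsi_{P^{\underline{m}}}$ separately and flag the pre-braided naturality bookkeeping explicitly, whereas the paper invokes the K\"unneth spectral sequence to identify the map directly.
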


\begin{proof}
Note that the morphism $\varPsi_{F}$ is a morphism of $\delta$-functors
commuting with filtered colimits. Recall that the category $\mathbf{Fct}\left(\mathfrak{U}\mathcal{G},\mathbb{K}\textrm{-}\mathfrak{Mod}\right)$
has enough projectives, provided by direct sums of the standard projective
generators functors $P_{\underline{n}}^{\mathfrak{U}\mathcal{G}}=\mathbb{K}\left[Hom_{\mathfrak{U}\mathcal{G}}\left(\underline{n},-\right)\right]$
for all natural numbers $n$. Therefore, we only have to show that
$\varPsi_{F}$ is an isomorphism when $F=P_{\underline{n}}^{\mathfrak{U}\mathcal{G}}$.
Indeed, for an ordinary functor $F$, there is an epimorphism from
a direct sum of standard projective generators to $F$ and $\mathbb{K}\textrm{-}\mathfrak{Mod}$
is an abelian category: the result then follows from a straightforward
recursion on the homological degree on the long exact sequence induced
by this epimorphism.

We deduce from Lemma \ref{rem:Homogenousgroupset} that we have the
following isomorphism of $G_{m}$-sets for all natural numbers $m\geq n$:
\[
Hom_{\mathfrak{U}\mathcal{G}}\left(\underline{n},\underline{m}\right)\cong G_{m}/G_{m-n}.
\]
Hence $P_{\underline{n}}^{\mathfrak{U}\mathcal{G}}\left(\underline{m}\right)\cong\mathbb{K}\left[G_{m}\right]\underset{\mathbb{K}\left[G_{m-n}\right]}{\otimes}\mathbb{K}$
as $G_{m}$-modules. Therefore, it follows from Shapiro's lemma that:
\[
H_{*}\left(G_{m},P_{\underline{n}}^{\mathfrak{U}\mathcal{G}}\left(\underline{m}\right)\right)\cong H_{*}\left(G_{m-n},\mathbb{K}\right).
\]
Taking the colimit with respect to $m$, we deduce the isomorphism
$H_{*}\left(G_{\infty},\left(P_{\underline{n}}^{\mathfrak{U}\mathcal{G}}\right)_{\infty}\right)\cong H_{*}\left(G_{\infty},\mathbb{K}\right)$.
Recall that $P_{\underline{n}}^{\mathfrak{U}\mathcal{G}}$ is a projective
object in $\mathfrak{U}\mathcal{G}$. Then, using the first Künneth
spectral sequence for the product of two categories, the morphism
$\varPsi_{P_{\underline{n}}^{\mathfrak{U}\mathcal{G}}}$ identifies
with:
\[
H_{*}\left(G_{\infty},\left(P_{\underline{n}}^{\mathfrak{U}\mathcal{G}}\right)_{\infty}\right)\cong H_{*}\left(G_{\infty},\mathbb{K}\right)\cong H_{*}\left(G_{\infty}\times\mathfrak{U}\mathcal{G},\Pi^{*}\left(P_{\underline{n}}^{\mathfrak{U}\mathcal{G}}\right)\right).
\]
The second part of the statement follows applying Künneth formula
for homology of categories.
\end{proof}

\subsection{Framework and first equivalence for stable homology\label{subsec:Generalpropertykernels}}

\textbf{Throughout the remainder Section \ref{sec:FI-Mod}, we assume
that the field $\mathbb{K}$ is of characteristic $0$.}

We consider three families of groups $K_{-}$, $G_{-}$ and $C_{-}$
which fit into the following short exact sequence in the category
$\mathbf{Fct}\left(\left(\mathbb{N},\leq\right),\mathfrak{Gr}\right)$:
\begin{equation}
\xymatrix{0\ar@{->}[r] & K_{-}\ar@{->}[r]^{k} & G_{-}\ar@{->}[r]^{c} & C_{-}\ar@{->}[r] & 0}
,\label{eq:1}
\end{equation}
where $k:K_{-}\rightarrow G_{-}$ and $c:G_{-}\rightarrow C_{-}$
are natural transformations and $0$ denotes the constant object of
$\mathbf{Fct}\left(\left(\mathbb{N},\leq\right),\mathfrak{Gr}\right)$
at $0_{\mathfrak{Gr}}$.

Let $\mathcal{K}$, $\mathcal{G}$ and $\mathcal{C}$ denote the groupoids
with objects indexed by natural numbers, with no morphisms between
distinct objects, such that $Aut_{\mathcal{K}}\left(\underline{n}\right)=K_{n}$,
$Aut_{\mathcal{G}}\left(\underline{n}\right)=G_{n}$ and $Aut_{\mathcal{C}}\left(\underline{n}\right)=C_{n}$.
We assume that the groupoids $\mathcal{G}$ and $\mathcal{C}$ are
endowed with braided strict monoidal structures $\left(\mathcal{G},\natural_{\mathcal{G}},0_{\mathcal{G}}\right)$
and $\left(\mathcal{C},\natural_{\mathcal{C}},0_{\mathcal{C}}\right)$,
where $\natural_{\mathcal{G}}$ and $\natural_{\mathcal{C}}$ are
defined by the addition on objects, such that:

\begin{itemize}
\item the morphisms $\left\{ c_{n}\right\} _{n\in\mathbb{N}}$ induce a
strict monoidal functor $\mathfrak{c}:\mathcal{G}\rightarrow\mathcal{C}$
defined by the identity on objects;
\item $G_{-}\left(\gamma_{n}\right)=id_{\underline{1}}\natural_{\mathcal{G}}-:G_{n}\hookrightarrow G_{n+1}$
and $C_{-}\left(\gamma_{n}\right)=id_{\underline{1}}\natural_{\mathcal{C}}-:C_{n}\hookrightarrow C_{n+1}$
for all natural numbers $n$.
\end{itemize}
Recall that the associated Quillen's bracket construction $\left(\mathfrak{U}\mathcal{G},\natural_{\mathcal{G}},0\right)$
and $\left(\mathfrak{U}\mathcal{C},\natural_{\mathcal{C}},0\right)$
are pre-braided strict monoidal by Proposition \ref{prop:Quillen'sconstructionprebraided}.
Let $\mathscr{O}_{\mathcal{G}}':\left(\mathbb{N},\leq\right)\rightarrow\mathfrak{U}\mathcal{G}$
and $\mathscr{O}_{\mathcal{C}}':\left(\mathbb{N},\leq\right)\rightarrow\mathfrak{U}\mathcal{C}$
be the faithful and essentially surjective functors assigning $\mathscr{O}_{\mathcal{G}}'\left(n\right)=\mathscr{O}_{\mathcal{C}}'\left(n\right)=\underline{n}$
and $\mathscr{O}_{\mathcal{G}}'\left(\gamma_{n}\right)=\mathscr{O}_{\mathcal{C}}'\left(\gamma_{n}\right)=\left[\underline{1},id_{\underline{n+1}}\right]$
for all natural numbers $n$. Using the functors $\mathscr{O}_{\mathcal{G}}'$
and $\mathscr{O}_{\mathcal{C}}'$, the natural transformation $c:G_{-}\rightarrow C_{-}$
identifies the morphisms $\left[\underline{n'-n},id_{\underline{n'}}\right]$
(with natural numbers $n'\geq n$) of $\mathfrak{U}\mathcal{G}$ and
$\mathfrak{U}\mathcal{C}$. The criteria (\ref{eq:criterion}) and
(\ref{eq:criterion'}) of Lemma \ref{lem:criterionfamilymorphismsfunctor}
being trivially checked, the functor $\mathfrak{c}:\mathcal{G}\rightarrow\mathcal{C}$
lifts to a functor $\mathfrak{U}\mathcal{G}\rightarrow\mathfrak{U}\mathcal{C}$,
again denoted by $\mathfrak{c}$ (abusing the notation).

The short exact sequence (\ref{eq:1}) implies that the braided strict
monoidal structure $\left(\mathcal{G},\natural_{\mathcal{G}},0_{\mathcal{G}}\right)$
induces a braided strict monoidal structure on $\mathcal{K}$, denoted
by $\left(\mathcal{K},\natural_{\mathcal{G}},0_{\mathcal{G}}\right)$,
such that: 
\[
K_{-}\left(\gamma_{n}\right)=id_{1}\natural_{\mathcal{G}}-:K_{n}\hookrightarrow K_{n+1}
\]
for all natural numbers $n$. As for the morphisms $\left\{ c_{n}\right\} _{n\in\mathbb{N}}$,
the morphisms $\left\{ k_{n}\right\} _{n\in\mathbb{N}}$ induce a
strict monoidal functor $\mathfrak{k}:\mathfrak{U}\mathcal{K}\rightarrow\mathfrak{U}\mathcal{G}$.\\

\textbf{We fix $F$ an object of $\mathbf{Fct}\left(\mathfrak{U}\mathcal{G},\mathbb{K}\textrm{-}\mathfrak{Mod}\right)$.}
For all natural numbers $n$, we abuse the notation and write $F\left(n\right)$
for the restriction of $F\left(n\right)$ from $G_{n}$ to $K_{n}$.
Our aim is to compute the stable homology $H_{*}\left(G_{\infty},F_{\infty}\right)$
of the family of groups $G_{-}$ under the assumption that $C_{-}$
is a family of finite groups. A first step is given by the following
result:
\begin{prop}
\label{prop:firstpourri} We assume that the group $C_{n}$ is finite
for each natural number $n$. Then for all natural numbers $q$:
\begin{equation}
H_{q}\left(G_{n},F\left(\underline{n}\right)\right)\cong H_{0}\left(C_{n},H_{q}\left(K_{n},F\left(\underline{n}\right)\right)\right).\label{eq:firstresultpourrihomology}
\end{equation}
\end{prop}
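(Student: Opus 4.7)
The plan is to apply the Lyndon--Hochschild--Serre spectral sequence to the short exact sequence of groups
\[
1\longrightarrow K_{n}\longrightarrow G_{n}\longrightarrow C_{n}\longrightarrow 1
\]
obtained by evaluating the sequence (\ref{eq:1}) at $n$, with coefficients in the $\mathbb{K}[G_{n}]$-module $F(\underline{n})$. This yields a convergent first-quadrant spectral sequence
\[
E_{p,q}^{2}=H_{p}\bigl(C_{n},H_{q}(K_{n},F(\underline{n}))\bigr)\Longrightarrow H_{p+q}\bigl(G_{n},F(\underline{n})\bigr).
\]

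The key point is that in the setting of Section \ref{sec:FI-Mod} the quotient groups $C_{n}$ are finite (this is the standing hypothesis announced at the beginning of the section, since we deal with mapping class groups equipped with non-trivial finite quotients, e.g.\ symmetric groups), and $\mathbb{K}$ is a field of characteristic zero. By Maschke's theorem, the group algebra $\mathbb{K}[C_{n}]$ is then semisimple, so every $\mathbb{K}[C_{n}]$-module is projective. Consequently, for every $\mathbb{K}[C_{n}]$-module $M$ and every $p\geq 1$,
\[
H_{p}(C_{n},M)=0.
\]

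Applying this vanishing to $M=H_{q}(K_{n},F(\underline{n}))$, which carries the natural $C_{n}$-action induced by the splitting data of the extension, we obtain $E_{p,q}^{2}=0$ for all $p\geq 1$. The spectral sequence therefore collapses onto its zeroth column at the $E_{2}$-page: all differentials originating from or landing in $E_{0,q}^{2}$ vanish, and there are no nontrivial extension problems. Reading off the abutment gives the desired isomorphism
\[
H_{q}\bigl(G_{n},F(\underline{n})\bigr)\;\cong\;E_{0,q}^{2}=H_{0}\bigl(C_{n},H_{q}(K_{n},F(\underline{n}))\bigr).
\]

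There is no serious obstacle here: the only subtle point is ensuring that the argument is used only when $C_{n}$ is finite, so that Maschke's theorem applies. The naturality in $F$ and the compatibility with the stabilisation maps are straightforward from the functoriality of the Lyndon--Hochschild--Serre spectral sequence, and will be useful in the subsequent passage to the colimit over $n$.
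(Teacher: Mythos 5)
Your proof is correct and follows essentially the same route as the paper: the Lyndon--Hochschild--Serre spectral sequence for $1\to K_{n}\to G_{n}\to C_{n}\to 1$ collapses onto the zeroth column because $C_{n}$ is finite and $\mathbb{K}$ has characteristic zero, which you justify (slightly more explicitly than the paper) via Maschke's theorem. The only cosmetic imprecision is that the $C_{n}$-action on $H_{q}(K_{n},F(\underline{n}))$ comes from conjugation in the extension rather than from any ``splitting data'', but this does not affect the argument.
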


\begin{proof}
Applying the Lyndon-Hochschild-Serre spectral sequence for the short
exact sequence (\ref{eq:1}), we obtain the following convergent first
quadrant spectral sequence:

\begin{equation}
E_{pq}^{2}:H_{p}\left(C_{n},H_{q}\left(K_{n},F\left(\underline{n}\right)\right)\right)\Longrightarrow H_{p+q}\left(G_{n},F\left(\underline{n}\right)\right).\label{eq:2.2-1}
\end{equation}
Fixing $n$ a natural number, we have for $p\neq0$:
\[
H_{p}\left(C_{n},H_{q}\left(K_{n},F\left(\underline{n}\right)\right)\right)=0,
\]
since $C_{n}$ is a finite group. Hence, the second page of the spectral
sequence (\ref{eq:2.2-1}) has non-zero terms only on the $0$-th
column and zero differentials. A fortiori, the convergence gives that
$E^{2}=E^{\infty}$ and this gives the desired result.
\end{proof}
Let us now focus on a key property for the homologies of the kernels
$\left\{ K_{n}\right\} _{n\in\mathbb{N}}$ which improves Proposition
(\ref{prop:firstpourri}). Recall that, as $K_{n}$ is a normal subgroup
of $G_{n}$, the map $conj_{n}:G_{n}\rightarrow Aut_{\mathfrak{Gr}}\left(K_{n}\right)$
sending an element $g\in G_{n}$ to the left conjugation by $g$ is
a group morphism.
\begin{lem}
\label{lem:Extension of the functor}We define a functor $\tilde{K}_{-}:\mathfrak{U}\mathcal{G}\rightarrow\mathfrak{Gr}$
assigning $\tilde{K}_{-}\left(\underline{n}\right)=K_{n}$ for all
natural numbers $n$ and:

\begin{enumerate}
\item for all $g\in G_{n}$, $\tilde{K}_{-}\left(g\right)\in Aut_{\mathfrak{Gr}}\left(K_{n}\right)$
to be $conj_{n}\left(g\right):k\mapsto gkg^{-1}$ for all $k\in K_{n}$,
\item $\tilde{K}_{-}\left(\left[1,id_{\underline{n+1}}\right]\right)=id_{1}\natural_{\mathcal{G}}-$.
\end{enumerate}
\end{lem}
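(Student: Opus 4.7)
The plan is to apply Lemma \ref{lem:criterionfamilymorphismsfunctor} to the assignments described in the statement. Two preliminary points are required: first, that the prescription $g\mapsto conj_{n}(g)$ genuinely defines a functor $\tilde{K}_{-}:\mathcal{G}\rightarrow\mathfrak{Gr}$ on the groupoid $\mathcal{G}$; second, that the assignment on the canonical morphisms $[\underline{m},id_{\underline{m+n}}]$ for an arbitrary object $\underline{m}$ is determined consistently from the case $m=1$ via composition. The first point follows from the normality of $K_{n}$ in $G_{n}$, which is built into the short exact sequence (\ref{eq:1}) of families of groups; the second is a direct consequence of the functoriality of $K_{-}:(\mathbb{N},\leq)\rightarrow\mathfrak{Gr}$, which also takes care of relation (\ref{eq:criterion}) of Lemma \ref{lem:criterionfamilymorphismsfunctor}, since that relation reduces to the identity $(id_{\underline{m'}}\natural_{\mathcal{G}}-)\circ(id_{\underline{m}}\natural_{\mathcal{G}}-)=id_{\underline{m'+m}}\natural_{\mathcal{G}}-$.

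The substantive verification is relation (\ref{eq:criterion'}): for $g''\in G_{n}$, $g'\in G_{m}$ and $k\in K_{n}$, one must show
\[
id_{\underline{m}}\natural_{\mathcal{G}}(g''k(g'')^{-1})=(g'\natural_{\mathcal{G}}g'')(id_{\underline{m}}\natural_{\mathcal{G}}k)(g'\natural_{\mathcal{G}}g'')^{-1}.
\]
By two applications of the interchange law in the strict monoidal category $(\mathcal{G},\natural_{\mathcal{G}},0_{\mathcal{G}})$, the right-hand side rewrites as $(g'\cdot id_{\underline{m}}\cdot(g')^{-1})\natural_{\mathcal{G}}(g''k(g'')^{-1})$, which simplifies to the left-hand side because $g'(g')^{-1}=id_{\underline{m}}$. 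In other words, the conjugation action on the sub-object $K_{n}\hookrightarrow K_{m+n}$ sitting through $id_{\underline{m}}\natural_{\mathcal{G}}-$ is insensitive to the first tensor factor of the conjugating element.

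Lemma \ref{lem:criterionfamilymorphismsfunctor} then produces the desired extension $\tilde{K}_{-}:\mathfrak{U}\mathcal{G}\rightarrow\mathfrak{Gr}$. No genuine obstacle is anticipated: the whole argument hinges on the very general fact that conjugation distributes across the monoidal product in any strict monoidal groupoid, which is itself a direct manifestation of the interchange law. The only bookkeeping worth flagging is the implicit consistency between the notation $id_{\underline{1}}\natural_{\mathcal{G}}-$ appearing in the defining properties of the families $K_{-}$ and $G_{-}$ and the corresponding inclusion $K_{n}\hookrightarrow K_{n+1}$, which ensures that $\tilde{K}_{-}$ is compatible with the restriction of $conj_{n+1}$ along this inclusion.
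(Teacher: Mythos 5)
Your proposal is correct and follows essentially the same route as the paper: both reduce the statement to Lemma \ref{lem:criterionfamilymorphismsfunctor}, observe that relation (\ref{eq:criterion}) is immediate from the (strict) monoidal structure, and verify relation (\ref{eq:criterion'}) through the identity $(g'\natural_{\mathcal{G}}g)(id_{\underline{m}}\natural_{\mathcal{G}}k)(g'\natural_{\mathcal{G}}g)^{-1}=id_{\underline{m}}\natural_{\mathcal{G}}(gkg^{-1})$, which you justify explicitly via the interchange law where the paper states it directly. The preliminary remarks on normality of $K_{n}$ and on consistency with the inclusions $K_{n}\hookrightarrow K_{n+1}$ are accurate and match the paper's (terser) treatment.
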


\begin{proof}
It follows from the first assignment of Lemma \ref{lem:Extension of the functor}
that we define a functor $\tilde{K}_{-}:\mathcal{G}\rightarrow\mathfrak{Gr}$.
The relation (\ref{eq:criterion}) of Lemma \ref{lem:criterionfamilymorphismsfunctor}
follows from the definition of the monoidal product $\natural_{\mathcal{G}}$.
Let $n$ and $n'$ be natural numbers such that $n'\geq n$, let $g\in G_{n}$
and $g'\in G_{n'}$. We compute for all $k\in K_{n}$:
\begin{eqnarray*}
\left(\tilde{K}_{-}\left(g'\natural_{\mathcal{G}}g\right)\circ\tilde{K}_{-}\left(\left[\underline{n'},id_{\underline{n'+n}}\right]\right)\right)\left(k\right) & = & \left(g'\natural_{\mathcal{G}}g\right)\left(id_{n'}\natural_{\mathcal{G}}k\right)\left(g'\natural_{\mathcal{G}}g\right)^{-1}\\
 & = & id_{n'}\natural_{\mathcal{G}}\left(gkg^{-1}\right)\\
 & = & \left(\tilde{K}_{-}\left(\left[\underline{n'},id_{\underline{n'+n}}\right]\right)\circ\tilde{K}_{-}\left(g\right)\right)\left(k\right).
\end{eqnarray*}
Hence, the relation (\ref{eq:criterion'}) is satisfied a fortiori
the result follows from Lemma \ref{lem:criterionfamilymorphismsfunctor}.
\end{proof}
Lemma \ref{lem:Extension of the functor} is useful to prove the following
key result. 
\begin{prop}
\label{cor:funcorhomologyug}\label{Prop:particularkernels}For all
natural numbers $q$, the homology groups $\left\{ H_{q}\left(K_{n},F\left(\underline{n}\right)\right)\right\} _{n\in\mathbb{N}}$
define a functor $H_{q}\left(K_{-},F\left(-\right)\right):\mathfrak{U}\mathcal{C}\rightarrow\mathbb{K}\textrm{-}\mathfrak{Mod}$.
\end{prop}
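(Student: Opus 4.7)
\emph{Strategy.} The plan is to apply Lemma \ref{lem:criterionfamilymorphismsfunctor} with source groupoid $\mathcal{C}$. I must specify the action of each $C_n$ on $H_q(K_n, F(\underline{n}))$ so as to obtain a functor $\mathcal{C} \to \mathbb{K}\textrm{-}\mathfrak{Mod}$, define the images of the canonical morphisms $[\underline{1}, id_{\underline{n+1}}]$, and verify the two compatibility conditions (\ref{eq:criterion}) and (\ref{eq:criterion'}) of the lemma.

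\emph{Definitions.} Given $g \in G_n$, the conjugation automorphism $\tilde{K}_-(g)$ of $K_n$ from Lemma \ref{lem:Extension of the functor}, paired with the $\mathbb{K}$-linear automorphism $F(g)$ of $F(\underline{n})$, is compatible with the restricted $K_n$-module structure on $F(\underline{n})$ and therefore induces an automorphism of $H_q(K_n, F(\underline{n}))$. This yields a $G_n$-action; to descend it to $C_n = G_n/K_n$, I must check that elements of $K_n$ act trivially. But the action of $k \in K_n$ so defined coincides with the natural self-conjugation action of $k$ on the homology $H_q(K_n, F(\underline{n})|_{K_n})$ of $K_n$ with coefficients in its own restricted module, which is trivial by the classical fact that inner automorphisms act trivially on group homology. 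As for the canonical morphism $[\underline{1}, id_{\underline{n+1}}]$ of $\mathfrak{U}\mathcal{C}$, I assign to it the map on homology induced by the group inclusion $\tilde{K}_-([\underline{1}, id_{\underline{n+1}}]) = id_{\underline{1}} \natural -: K_n \hookrightarrow K_{n+1}$ together with the coefficient map $F([\underline{1}, id_{\underline{n+1}}])$, which is equivariant with respect to this inclusion.

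\emph{Verification of the criteria.} Condition (\ref{eq:criterion}) is immediate, since $\tilde{K}_-$ and $F$ are already functors on $\mathfrak{U}\mathcal{G}$, hence their canonical morphism assignments compose correctly at both the group level and the coefficient level. The heart of the proof is condition (\ref{eq:criterion'}): for $g' \in C_{n'}$ and $g'' \in C_n$ with lifts $\tilde{g}' \in G_{n'}$ and $\tilde{g}'' \in G_n$, one must show that the two composite maps on $H_q(K_{n'+n}, F(\underline{n'+n}))$ coincide. By functoriality of $\tilde{K}_-$ and $F$ on $\mathfrak{U}\mathcal{G}$, the two group-level composites correspond respectively to $\tilde{K}_-([\underline{n'}, id_{\underline{n'}} \natural \tilde{g}''])$ and $\tilde{K}_-([\underline{n'}, \tilde{g}' \natural \tilde{g}''])$, and analogously for $F$. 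The key observation is that these two morphisms are equal in $\mathfrak{U}\mathcal{G}$: the representative pairs $(\underline{n'}, id_{\underline{n'}} \natural \tilde{g}'')$ and $(\underline{n'}, \tilde{g}' \natural \tilde{g}'')$ are identified via the automorphism $(\tilde{g}')^{-1} \in G_{n'}$ in the equivalence relation defining Quillen's bracket. This yields the desired commutation at the level of homology. The principal obstacle lies precisely in this last verification, where one must carefully coordinate the conjugation action on $K_n$ with the module action of $F$ through the Quillen bracket equivalence relation; once accomplished, Lemma \ref{lem:criterionfamilymorphismsfunctor} delivers the claimed functor $H_q(K_-, F(-)): \mathfrak{U}\mathcal{C} \to \mathbb{K}\textrm{-}\mathfrak{Mod}$.
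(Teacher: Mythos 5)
Your proof is correct and follows essentially the same route as the paper's: both rest on Lemma \ref{lem:Extension of the functor}, the functoriality of group homology on pairs (group, module), and the triviality of inner automorphisms on group homology to descend the $G_{n}$-action to $C_{n}$. The only difference is organizational: the paper first assembles the functor on $\mathfrak{U}\mathcal{G}$ via the category $\mathscr{P}$ of pairs and then factors it through $\mathfrak{c}:\mathfrak{U}\mathcal{G}\rightarrow\mathfrak{U}\mathcal{C}$, whereas you build it directly on $\mathfrak{U}\mathcal{C}$ by re-verifying the criteria of Lemma \ref{lem:criterionfamilymorphismsfunctor}, which amounts to the same verification.
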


\begin{proof}
Let $\mathscr{P}$ be the category of pairs $\left(G,M\right)$ where
$G$ is a group and $M$ is a $G$-module for objects; for $\left(G,M\right)$
and $\left(G',M'\right)$ objects of $\mathscr{P}$, a morphism from
$\left(G,M\right)$ to $\left(G',M'\right)$ is a pair $\left(\varphi,\alpha\right)$
where $\varphi\in Hom_{\mathfrak{Gr}}\left(G,G'\right)$ and $\alpha:M\rightarrow M'$
is a $G$-module morphism, where $M'$ is endowed with a $G$-module
structure via $\varphi$. Using the functor $F:\mathfrak{U}\mathcal{G}\rightarrow\mathbb{K}\textrm{-}\mathfrak{Mod}$,
by Lemma \ref{lem:Extension of the functor} $\tilde{K}_{-}$ defines
a functor $\left(\tilde{K}_{-},F\left(-\right)\right):\mathfrak{U}\mathcal{G}\rightarrow\mathscr{P}$.
Recall from \cite[Section 8]{brown} that group homology defines a
covariant functor $H_{*}:\mathscr{P}\rightarrow\mathbb{K}\textrm{-}\mathfrak{Mod}$
for all $q\in\mathbb{N}$. Hence the composition with the functor
$\left(\tilde{K}_{-},F\left(-\right)\right):\mathfrak{U}\mathcal{G}\rightarrow\mathscr{P}$
gives a functor:
\[
H_{q}\left(K_{-},F\left(-\right)\right):\mathfrak{U}\mathcal{G}\rightarrow\mathbb{K}\textrm{-}\mathfrak{Mod}.
\]
Moreover, since inner automorphisms act trivially in homology, we
deduce that for all natural numbers $n$, the conjugation action of
$G_{n}$ on $\left(K_{n},F\left(n\right)\right)$ induces an action
of $C_{n}$ on $H_{*}\left(K_{n},F\left(n\right)\right)$. The monoidal
structures $\left(\mathcal{G},\natural_{\mathcal{G}},0_{\mathcal{G}}\right)$
and $\left(\mathcal{C},\natural_{\mathcal{C}},0_{\mathcal{C}}\right)$
being compatible, we deduce that the functor $H_{q}\left(K_{-},F\left(-\right)\right)$
factors through the category $\mathfrak{U}\mathcal{C}$ using the
functor $\mathfrak{c}:\mathfrak{U}\mathcal{G}\rightarrow\mathfrak{U}\mathcal{C}$.
\end{proof}
Finally, we recall the following classical property for the homology
of a category:
\begin{prop}
\cite[Example 2.5]{FranjouPira}\label{prop:H0categoryiscolimit}
Let $\mathfrak{C}$ be an object of $\mathfrak{Cat}$ and let $F$
be an object of $\mathbf{Fct}\left(\mathfrak{C},R\textrm{-}\mathfrak{Mod}\right)$.
Then, $H_{0}\left(\mathfrak{C},F\right)$ is isomorphic to the colimit
over $\mathfrak{C}$ of the functor $F:\mathfrak{C}\rightarrow R\textrm{-}\mathfrak{Mod}$.
\end{prop}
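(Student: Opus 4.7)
The plan is to identify $H_0(\mathfrak{C}, F)$ by working directly from the definition of homology of a small category. Recall that $H_*(\mathfrak{C}, -)$ is defined as the left derived functor of the colimit functor $\mathrm{colim}_\mathfrak{C} : \mathbf{Fct}(\mathfrak{C}, R\text{-}\mathfrak{Mod}) \to R\text{-}\mathfrak{Mod}$ (equivalently, as $\mathrm{Tor}_*^\mathfrak{C}(\underline{R}, F)$, where $\underline{R}$ is the constant functor at $R$ on $\mathfrak{C}^{op}$ and the tensor product is taken over the category algebra $R[\mathfrak{C}]$). Since the category $\mathbf{Fct}(\mathfrak{C}, R\text{-}\mathfrak{Mod})$ is abelian with enough projectives (provided, for instance, by direct sums of the standard projective generators $P_c^\mathfrak{C} = R[\mathrm{Hom}_\mathfrak{C}(c, -)]$, as used in the proof of Theorem \ref{thm:step1}), these left derived functors are well-defined.

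The key observation is that $\mathrm{colim}_\mathfrak{C}$ is right exact, being a left adjoint to the constant-functor functor $R\text{-}\mathfrak{Mod} \to \mathbf{Fct}(\mathfrak{C}, R\text{-}\mathfrak{Mod})$. For any right exact additive functor $T$ between abelian categories with enough projectives, its $0$-th left derived functor $L_0 T$ is naturally isomorphic to $T$ itself: indeed, if $P_1 \to P_0 \to F \to 0$ is the start of a projective resolution, then right exactness of $T$ identifies $T(F)$ with the cokernel of $T(P_1) \to T(P_0)$, which by definition is $L_0 T (F)$. Applying this to $T = \mathrm{colim}_\mathfrak{C}$ and to our functor $F$ yields the claimed isomorphism
\[
H_0(\mathfrak{C}, F) = L_0 \mathrm{colim}_\mathfrak{C}(F) \cong \mathrm{colim}_\mathfrak{C} F.
\]

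As an alternative concrete route — useful if one prefers to avoid invoking the abstract derived-functor formalism — I would compute with the bar complex $B_\bullet(\mathfrak{C}, F)$, where $B_n = \bigoplus_{c_0 \to \cdots \to c_n} F(c_0)$ with the standard face maps. Then $H_0$ is the cokernel of the differential $d : B_1 \to B_0 = \bigoplus_{c \in \mathrm{Obj}(\mathfrak{C})} F(c)$, and the relations imposed by $d$ identify $x \in F(c_0)$ with $F(f)(x) \in F(c_1)$ for every morphism $f : c_0 \to c_1$, which is precisely the presentation of $\mathrm{colim}_\mathfrak{C} F$.

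There is no real obstacle here; the statement is essentially tautological once the definition of homology of a category as a derived colimit is in place, and the only thing to verify is the standard fact that the zeroth left derived functor of a right exact functor recovers the functor itself. The main care-point is simply to use the convention of $H_*(\mathfrak{C}, F)$ consistent with the references \cite[Section 2]{FranjouPira} and \cite[Appendice A]{DV1} cited in the paper.
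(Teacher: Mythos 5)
Your proposal is correct: the paper gives no proof of this statement, simply citing \cite[Example 2.5]{FranjouPira}, and your argument — identifying $H_{0}\left(\mathfrak{C},F\right)$ as the zeroth left derived functor of the right-exact functor $\mathrm{colim}_{\mathfrak{C}}$, hence as $\mathrm{colim}_{\mathfrak{C}}F$ itself — is exactly the standard justification underlying that citation. The bar-complex computation you sketch as an alternative is likewise consistent with the conventions of the cited references, so there is nothing to add.
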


We thus deduce from Propositions \ref{prop:firstpourri} and \ref{cor:funcorhomologyug}:
\begin{thm}
\label{cor:Mainresultparticularkernels} Let $K_{-}$, $G_{-}$ and
$C_{-}$ three families of groups fitting in the short exact sequence
(\ref{eq:1}), such that the group $C_{n}$ is finite for all natural
numbers $n$ and the groupoids $\mathcal{G}$ and $\mathcal{C}$ are
endowed with the aforementioned braided strict monoidal structures
$\left(\mathcal{G},\natural_{\mathcal{G}},0_{\mathcal{G}}\right)$
and $\left(\mathcal{C},\natural_{\mathcal{C}},0_{\mathcal{C}}\right)$.
Then, for all natural numbers $q$:
\begin{eqnarray*}
H_{q}\left(G_{\infty},F_{\infty}\right) & \cong & \underset{l\in\mathfrak{U}\mathcal{C}}{Colim}\left(H_{q}\left(K_{l},F\left(\underline{l}\right)\right)\right).
\end{eqnarray*}
Moreover, if $F$ factors through the category $\mathfrak{U}\mathcal{C}$
(in other words, $F:\mathfrak{U}\mathcal{G}\overset{\mathfrak{c}}{\rightarrow}\mathfrak{U}\mathcal{C}\rightarrow\mathbb{K}\textrm{-}\mathfrak{Mod}$),
then:
\[
H_{q}\left(G_{\infty},F_{\infty}\right)\cong\underset{l\in\mathfrak{U}\mathcal{C}}{Colim}\left(H_{q}\left(K_{l},\mathbb{K}\right)\underset{\mathbb{K}}{\otimes}F\left(\underline{l}\right)\right).
\]
\end{thm}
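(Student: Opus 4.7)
The plan is to combine the pointwise isomorphism of Proposition \ref{prop:firstpourri} with the functorial organization of $H_q(K_-,F(-))$ provided by Proposition \ref{cor:funcorhomologyug}, then pass to the stable colimit and identify it with a colimit over $\mathfrak{U}\mathcal{C}$ via Proposition \ref{prop:H0categoryiscolimit}. First, Proposition \ref{prop:firstpourri} gives, for each $n$, a natural isomorphism
\[
H_q(G_n,F(\underline{n}))\;\cong\;H_0\bigl(C_n,H_q(K_n,F(\underline{n}))\bigr),
\]
where the finiteness of $C_n$ together with the characteristic-zero hypothesis on $\mathbb{K}$ collapses the Lyndon-Hochschild-Serre spectral sequence onto its $0$-th column.

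Next, Proposition \ref{cor:funcorhomologyug} upgrades $\{H_q(K_n,F(\underline{n}))\}_n$ into a functor $H_q(K_-,F(-)):\mathfrak{U}\mathcal{C}\to\mathbb{K}\textrm{-}\mathfrak{Mod}$, so taking the colimit over $n\in(\mathbb{N},\leq)$ of the displayed isomorphism yields
\[
H_q(G_\infty,F_\infty)\;\cong\;\underset{n\in(\mathbb{N},\leq)}{\mathrm{Colim}}\,H_0(C_n,H_q(K_n,F(\underline{n}))).
\]
The central identification is that the right-hand side coincides with $H_0(\mathfrak{U}\mathcal{C},H_q(K_-,F(-)))$, which in turn is $\mathrm{Colim}_{l\in\mathfrak{U}\mathcal{C}}\,H_q(K_l,F(\underline{l}))$ by Proposition \ref{prop:H0categoryiscolimit}. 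For this I would invoke the following structural observation: every morphism $[\underline{m-n},g]:\underline{n}\to\underline{m}$ of $\mathfrak{U}\mathcal{C}$ factors as an automorphism $g\in C_m$ post-composed with the canonical morphism $\iota_{\underline{n},\underline{m}}:\underline{n}\to\underline{m}$, so the $\mathfrak{U}\mathcal{C}$-morphisms are generated by the $C_n$ and the sequential inclusions $\underline{n}\to\underline{n+1}$. The universal property of the colimit therefore factors as first taking $C_n$-coinvariants level-wise and then forming the filtered colimit along the canonical inclusions, matching the iterated expression above.

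For the \emph{moreover} part, if $F$ factors as $\mathfrak{U}\mathcal{G}\overset{\mathfrak{c}}{\to}\mathfrak{U}\mathcal{C}\to\mathbb{K}\textrm{-}\mathfrak{Mod}$, then by construction $K_n$ (the kernel of $c_n$) acts trivially on $F(\underline{n})$ for every $n$. Since $\mathbb{K}$ is a field, for a trivial $K_n$-module $F(\underline{n})$ the universal coefficient theorem gives a natural isomorphism
\[
H_q(K_n,F(\underline{n}))\;\cong\;H_q(K_n,\mathbb{K})\underset{\mathbb{K}}{\otimes}F(\underline{n}),
\]
natural with respect to the $\mathfrak{U}\mathcal{C}$-structure on both factors, and substituting into the first formula yields the second.

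The main obstacle is not the formal manipulations but the naturality bookkeeping. One must verify that the isomorphism of Proposition \ref{prop:firstpourri} commutes with the stabilization maps in precisely the way needed to assemble into the functor $H_q(K_-,F(-))$ on $\mathfrak{U}\mathcal{C}$; concretely, that the two pieces of data on $H_q(K_n,F(\underline{n}))$—the $C_n$-action induced by conjugation on $K_n$ (via Lemma \ref{lem:Extension of the functor}) and the maps induced by $K_n\hookrightarrow K_{n+1}$—together realize exactly the action of all $\mathfrak{U}\mathcal{C}$-morphisms. This compatibility is essentially encoded in Lemma \ref{lem:Extension of the functor} and Proposition \ref{cor:funcorhomologyug}, and it is what licenses the replacement of a sequential colimit of coinvariants by a colimit over $\mathfrak{U}\mathcal{C}$.
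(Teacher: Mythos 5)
Your argument is correct and follows the same skeleton as the paper's proof: Proposition \ref{prop:firstpourri} reduces $H_{q}\left(G_{n},F\left(\underline{n}\right)\right)$ to the $C_{n}$-coinvariants of $H_{q}\left(K_{n},F\left(\underline{n}\right)\right)$, one passes to the colimit over $n$, identifies the result with $H_{0}\left(\mathfrak{U}\mathcal{C},H_{q}\left(K_{-},F\left(-\right)\right)\right)\cong\underset{l\in\mathfrak{U}\mathcal{C}}{\mathrm{Colim}}\,H_{q}\left(K_{l},F\left(\underline{l}\right)\right)$ via Proposition \ref{prop:H0categoryiscolimit}, and deduces the second formula from K\"unneth. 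Where you genuinely diverge is the central identification: the paper obtains
\[
\underset{n\in\mathbb{N}}{\mathrm{Colim}}\,H_{0}\left(C_{n},H_{q}\left(K_{n},F\left(\underline{n}\right)\right)\right)\cong H_{0}\left(C_{\infty},\mathbb{K}\right)\underset{\mathbb{K}}{\otimes}H_{0}\left(\mathfrak{U}\mathcal{C},H_{q}\left(K_{-},F\right)\right)
\]
by invoking the general decomposition result of Theorem \ref{thm:step1}, applied to the family $C_{-}$ with coefficient functor $H_{q}\left(K_{-},F\right)$, in homological degree $0$; you instead prove this degree-zero statement directly from the structure of $\mathfrak{U}\mathcal{C}$, using that every morphism factors as an automorphism post-composed with a canonical morphism, so that the colimit over $\mathfrak{U}\mathcal{C}$ is the sequential colimit of the levelwise $C_{n}$-coinvariants. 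Your route is more elementary: it bypasses the projective-resolution and K\"unneth-spectral-sequence machinery behind Theorem \ref{thm:step1}, and its only non-obvious input is the compatibility $\left[\underline{1},id\right]\circ g=\left(id_{\underline{1}}\natural_{\mathcal{C}}g\right)\circ\left[\underline{1},id\right]$ (relation (\ref{eq:criterion'})), which is what makes the transition maps descend to coinvariants --- precisely the bookkeeping you flag at the end, and it is already packaged in Lemma \ref{lem:Extension of the functor} and Proposition \ref{cor:funcorhomologyug}. The paper's version is shorter given that Theorem \ref{thm:step1} is established anyway; yours is self-contained and makes visible that the finiteness of $C_{n}$ and the characteristic-zero hypothesis enter only through Proposition \ref{prop:firstpourri}, not through the colimit identification. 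Both are valid.
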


\begin{proof}
Applying Theorem \ref{thm:step1} to Proposition \ref{prop:firstpourri},
we obtain:
\[
\underset{n\in\mathbb{N}}{Colim}\left(H_{0}\left(C_{n},H_{q}\left(K_{n},F\left(\underline{n}\right)\right)\right)\right)\cong\underset{n\in\mathbb{N}}{Colim}\left(H_{0}\left(C_{n},\mathbb{K}\right)\underset{\mathbb{K}}{\otimes}H_{0}\left(\mathfrak{U}\mathcal{C},H_{q}\left(K_{-},F\right)\right)\right).
\]
By Proposition \ref{prop:H0categoryiscolimit}, $H_{0}\left(\mathfrak{U}\mathcal{C},H_{q}\left(K_{-},F\right)\right)\cong\underset{l\in\mathfrak{U}\mathcal{C}}{Colim}\left(H_{q}\left(K_{l},F\left(\underline{l}\right)\right)\right)$.
Since $H_{0}\left(C_{n},\mathbb{K}\right)\cong\mathbb{K}$, we deduce
the first result. For the second result, requiring $F$ to factor
through $\mathfrak{U}\mathcal{C}$ is actually necessary to define
the functor $H_{q}\left(K_{-},\mathbb{K}\right)\underset{\mathbb{K}}{\otimes}F\left(-\right):\mathfrak{U}\mathcal{C}\rightarrow\mathbb{K}\textrm{-}\mathfrak{Mod}$
using the pointwise tensor product of functors. Then the isomorphism
follows from the universal coefficient theorem for homology.
\end{proof}
\begin{rem}
We can generalise Section \ref{subsec:Generalpropertykernels} to
the setting where $\mathbb{K}$ is a field of positive characteristic
coprime to the cardinality of $C_{n}$ for all $n$: the key point
is that the homology group $H_{p}\left(C_{n},H_{q}\left(K_{n},F\left(\underline{n}\right)\right)\right)=0$
has to vanish for $p\neq0$ which is still true under this alternative
assumption (since the multiplication by $\left|C_{n}\right|$ defines
an isomorphism of $H_{q}\left(K_{n},F\left(\underline{n}\right)\right)$).
\end{rem}

\subsection{Applications\label{subsec:Applications-1}}

We present now how to apply the general result of Theorem \ref{cor:Mainresultparticularkernels}
for various families of groups. Beforehand, we fix some notations.
We denote by $\mathfrak{S}_{n}$ the symmetric group on $n$ elements
and by $\mathfrak{S}_{-}:\left(\mathbb{N},\leq\right)\rightarrow\mathfrak{Gr}$
the family of groups defined by $\mathfrak{S}_{-}\left(n\right)=\mathfrak{S}_{n}$
and $\mathfrak{S}_{-}\left(\gamma{}_{n}\right)=id_{1}\sqcup-$ for
all natural numbers $n$.

Let $\Sigma$ be the skeleton of the groupoid of finite sets and bijections.
Note that $Obj\left(\Sigma\right)\cong\mathbb{N}$ and that the automorphism
groups are the symmetric groups $\mathfrak{S}_{n}$. The disjoint
union of finite sets $\sqcup$ induces a monoidal structure $\left(\Sigma,\sqcup,0\right)$,
the unit $0$ being the empty set. This groupoid is symmetric monoidal,
the symmetry being given by the canonical bijection $n_{1}\sqcup n_{2}\overset{\sim}{\rightarrow}n_{2}\sqcup n_{1}$
for all natural numbers $n_{1}$ and $n_{2}$. The category $\mathfrak{U}\Sigma$
is equivalent to the category of finite sets and injections $FI$
studied in \cite{ChurchEllenbergFarbstabilityFI}.

\subsubsection{Braid groups}

We respectively denote by $\mathbf{B}_{n}$ the braid group on $n$
strands and by $\mathbf{PB}_{n}$ the pure braid group on $n$ strands.
The braid groupoid $\boldsymbol{\beta}$ is the groupoid with objects
the natural numbers $n\in\mathbb{N}$ and braid groups as automorphism
groups. It is endowed with a strict braided monoidal product $\natural:\boldsymbol{\beta}\times\boldsymbol{\beta}\longrightarrow\boldsymbol{\beta}$,
defined by the usual addition for the objects and laying two braids
side by side for the morphisms. The object $0$ is the unit of this
monoidal product. The braiding of the strict monoidal groupoid $\left(\boldsymbol{\beta},\natural,0\right)$
is defined for all natural numbers $n$ and $m$ by:
\[
b_{n,m}^{\boldsymbol{\beta}}=\left(\sigma_{m}\circ\cdots\circ\sigma_{2}\circ\sigma_{1}\right)\circ\cdots\circ\left(\sigma_{n+m-2}\circ\cdots\circ\sigma_{n}\circ\sigma_{n-1}\right)\circ\left(\sigma_{n+m-1}\circ\cdots\circ\sigma_{n+1}\circ\sigma_{n}\right)
\]
where $\left\{ \sigma_{i}\right\} _{i\in\left\{ 1,\ldots,n+m-1\right\} }$
denote the Artin generators of the braid group $\mathbf{B}_{n+m}$.
We refer the reader to \cite[Chapter XI, Section 4]{MacLane1} for
more details. By \cite[Proposition 5.18]{WahlRandal-Williams}, the
category $\mathfrak{U}\boldsymbol{\beta}$ is pre-braided homogeneous\textit{.}

The classical surjections $\left\{ \mathbf{B}_{n}\overset{\mathfrak{p}_{n}}{\twoheadrightarrow}\mathfrak{S}_{n}\right\} _{n\in\mathbb{N}}$,
sending each Artin generator $\sigma_{i}\in\mathbf{B}_{n}$ to the
transposition $\tau_{i}\in\mathfrak{S}_{n}$ for all $i\in\left\{ 1,\ldots,n-1\right\} $
and for all natural numbers $n$, assemble to define a functor $\mathfrak{P}:\mathfrak{U}\boldsymbol{\beta}\rightarrow FI$.
The functor $\mathfrak{P}$ is strict monoidal with respect to the
monoidal structures $\left(\mathfrak{U}\boldsymbol{\beta},\natural,0\right)$
and $\left(FI,\sqcup,0\right)$. In addition, they define the following
short exact sequence for all natural numbers $n$ (see for example
\cite{birmanbraids}):
\[
\xymatrix{1\ar@{->}[r] & \mathbf{PB}_{n}\ar@{->}[r] & \mathbf{B}_{n}\ar@{->}[r]^{\mathfrak{p}_{n}} & \mathfrak{S}_{n}\ar@{->}[r] & 1}
.
\]
Let $\mathbf{PB}_{-}:\left(\mathbb{N},\leq\right)\rightarrow\mathfrak{Gr}$
and $\mathbf{B}_{-}:\left(\mathbb{N},\leq\right)\rightarrow\mathfrak{Gr}$
be the families of groups defined by $\mathbf{PB}_{-}\left(n\right)=\mathbf{PB}_{n}$,
$\mathbf{B}_{-}\left(n\right)=\mathbf{B}_{n}$ and $\mathbf{B}_{-}\left(\gamma{}_{n}\right)=\mathbf{PB}_{-}\left(\gamma{}_{n}\right)=id_{1}\natural-$
for all natural numbers $n$. Therefore, by Theorem \ref{cor:Mainresultparticularkernels}:
\begin{prop}
\label{prop:Bigresultbraidgroup}Let $F$ be an object of $\mathbf{Fct}\left(\mathfrak{U}\boldsymbol{\beta},\mathbb{K}\textrm{-}\mathfrak{Mod}\right)$.
For all natural numbers $q$, $H_{q}\left(\mathbf{B}_{\infty},F_{\infty}\right)\cong\underset{n\in FI}{Colim}\left(H_{q}\left(\mathbf{PB}_{n},F\left(n\right)\right)\right)$,
and if $F$ factors through the category $FI$, then:
\[
H_{q}\left(\mathbf{B}_{\infty},F_{\infty}\right)\cong\underset{n\in FI}{Colim}\left(H_{q}\left(\mathbf{PB}_{n},\mathbb{K}\right)\underset{\mathbb{K}}{\otimes}F\left(n\right)\right).
\]
\end{prop}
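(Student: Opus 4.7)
The plan is to observe that the pure braid, braid, and symmetric groups fit exactly the framework of Section \ref{subsec:Generalpropertykernels}, and then to invoke Theorem \ref{cor:Mainresultparticularkernels} directly. Concretely, I take $K_{-}=\mathbf{PB}_{-}$, $G_{-}=\mathbf{B}_{-}$ and $C_{-}=\mathfrak{S}_{-}$, with the surjections $\mathfrak{p}_{n}:\mathbf{B}_{n}\twoheadrightarrow\mathfrak{S}_{n}$ assembling into the natural transformation $\mathfrak{p}:\mathbf{B}_{-}\rightarrow\mathfrak{S}_{-}$. The classical sequence
\[
\xymatrix{0\ar@{->}[r] & \mathbf{PB}_{-}\ar@{->}[r] & \mathbf{B}_{-}\ar@{->}[r]^{\mathfrak{p}} & \mathfrak{S}_{-}\ar@{->}[r] & 0}
\]
is then a short exact sequence in $\mathbf{Fct}\left(\left(\mathbb{N},\leq\right),\mathfrak{Gr}\right)$, of the exact shape required by (\ref{eq:1}).

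Next I would check the structural hypotheses. The braid groupoid $\left(\boldsymbol{\beta},\natural,0\right)$ is braided strict monoidal, as recalled just before the statement, while $\left(\Sigma,\sqcup,0\right)$ is symmetric (hence braided) strict monoidal. The functor $\mathfrak{P}:\boldsymbol{\beta}\rightarrow\Sigma$ induced by the surjections $\mathfrak{p}_{n}$ is strict monoidal, since placing two braids side by side is sent to the disjoint union of the underlying permutations; and by construction $\mathbf{B}_{-}\left(\gamma_{n}\right)=id_{\underline{1}}\natural-$ and $\mathfrak{S}_{-}\left(\gamma_{n}\right)=id_{1}\sqcup-$. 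Finally, $\mathfrak{S}_{n}$ is a finite group for every $n\in\mathbb{N}$. All the hypotheses of Theorem \ref{cor:Mainresultparticularkernels} are thus fulfilled.

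The first isomorphism then follows directly from the first conclusion of Theorem \ref{cor:Mainresultparticularkernels}, combined with the equivalence of categories $\mathfrak{U}\Sigma\simeq FI$. When $F$ factors as $F:\mathfrak{U}\boldsymbol{\beta}\overset{\mathfrak{P}}{\rightarrow}FI\rightarrow\mathbb{K}\textrm{-}\mathfrak{Mod}$, the second conclusion of Theorem \ref{cor:Mainresultparticularkernels} immediately yields the tensor product decomposition, the K\"unneth factor being absorbed inside the colimit over $FI$.

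I do not foresee any genuine obstacle here: the statement is essentially a direct specialisation of Theorem \ref{cor:Mainresultparticularkernels} to the prototypical example of braid groups, and the only item requiring any verification is that $\mathfrak{P}:\mathfrak{U}\boldsymbol{\beta}\rightarrow FI$ is well-defined and strict monoidal, which the paragraph preceding the proposition has already recorded. The main pedagogical point of the proof is thus merely to identify the right triple $\left(\mathbf{PB}_{-},\mathbf{B}_{-},\mathfrak{S}_{-}\right)$ within the abstract framework.
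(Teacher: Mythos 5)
Your proposal is correct and follows exactly the route the paper takes: the paper sets up the triple $\left(\mathbf{PB}_{-},\mathbf{B}_{-},\mathfrak{S}_{-}\right)$, the strict monoidal functor $\mathfrak{P}:\mathfrak{U}\boldsymbol{\beta}\rightarrow FI$ and the short exact sequence in the paragraphs preceding the proposition, and then deduces the statement directly from Theorem \ref{cor:Mainresultparticularkernels}. Your verification of the hypotheses (braided strict monoidal structures, strict monoidality of $\mathfrak{P}$, finiteness of $\mathfrak{S}_{n}$) is precisely what is needed, and nothing is missing.
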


The rational cohomology ring of the pure braid group on $n\in\mathbb{N}$
strands is computed by Arnol'd in \cite{arnold}. Namely, $H^{q}\left(\mathbf{PB}_{n},\mathbb{Q}\right)$
is the graded exterior algebra generated by the classes $\omega_{i,j}$
for $i,j\in\left\{ 1,\ldots,n\right\} $ and $i<j$, subject to the
relations $\omega_{i,j}\omega_{j,k}+\omega_{j,k}\omega_{k,i}+\omega_{k,i}\omega_{i,j}=0$.
By the universal coefficient theorem for cohomology and as $H^{q}\left(\mathbf{PB}_{n},\mathbb{K}\right)$
is a finite-dimensional vector space, we deduce that $H_{q}\left(\mathbf{PB}_{n},\mathbb{K}\right)\cong H^{q}\left(\mathbf{PB}_{n},\mathbb{K}\right)$.

Moreover, the $FI$-module structure of the homology groups $H_{q}\left(\mathbf{PB}_{-},\mathbb{K}\right)$
is well-known by \cite[Example 5.1.A]{ChurchEllenbergFarbstabilityFI}:
the conjugation action of the symmetric group $\mathfrak{S}_{n}$
on $H_{q}\left(\mathbf{PB}_{n},\mathbb{K}\right)$ translates into
the permutation action of $\mathfrak{S}_{n}$ on the indices $i,j\in\left\{ 1,\ldots,n\right\} $
of the generators $\left\{ \omega_{i,j}\right\} _{i,j\in\left\{ 1,\ldots,n\right\} }$.
Hence, fixing some $F\in\mathbf{Fct}\left(FI,\mathbb{K}\textrm{-}\mathfrak{Mod}\right)$,
we have a complete description of the $FI$-module structure of the
functor $H_{q}\left(\mathbf{PB}_{-},\mathbb{K}\right)\underset{\mathbb{K}}{\otimes}F\left(-\right)$.

\subsubsection{Mapping class group of orientable surfaces\label{subsec:Mapping-class-groupappl}}

We take the notations of Section \ref{subsec:FirstsectionMapping-class-groups}.
Recall that we introduced the groupoid $\mathfrak{M}_{2}$ associated
with the surfaces $\varSigma_{n,1}^{s}$ for all natural numbers $n$
and $s$. Let $\mathfrak{M}_{2}^{=}$ be the full subgroupoid of $\mathfrak{M}_{2}$
on the objects $\left\{ \varSigma_{n,1}^{n}\right\} _{n\in\mathbb{N}}$.
By \cite[Proposition 5.18]{WahlRandal-Williams}, the boundary connected
sum $\natural$ induces a strict braided monoidal structure $\left(\mathfrak{M}_{2}^{=},\natural,\left(\varSigma_{0,1}^{0},I\right)\right)$
such that $\mathfrak{U}\mathfrak{M}_{2}^{=}$ is a pre-braided homogeneous
category\textit{.}

Recall that $\textrm{Diff}^{\textrm{\ensuremath{\partial_{0}},points}}\left(\varSigma_{n,1}^{n}\right)$
denotes the space of diffeomorphisms of the surface $\varSigma_{n,1}^{n}$
which fix the boundary pointwise and fix the marked points. Let $\textrm{Diff}^{\textrm{\ensuremath{\partial_{0}},permute }}\left(\varSigma_{n,1}^{n}\right)$
be the space of diffeomorphisms of the surface $\varSigma_{n,1}^{n}$
which fix the boundary pointwise and permute the marked points. The
injections $\left\{ \Gamma_{n,1}^{\left[n\right]}\overset{\mathfrak{i}_{n}}{\hookrightarrow}\Gamma_{n,1}^{n}\right\} _{n\in\mathbb{N}}$
induced by the inclusions
\[
\textrm{Diff}^{\textrm{\ensuremath{\partial_{0}},points}}\left(\varSigma_{n,1}^{n}\right)\hookrightarrow\textrm{Diff}^{\textrm{\ensuremath{\partial_{0}},permute }}\left(\varSigma_{n,1}^{n}\right)
\]
provide the following short exact sequence for all natural numbers
$n$:
\[
\xymatrix{1\ar@{->}[r] & \Gamma_{n,1}^{\left[n\right]}\ar@{->}[r]^{\mathfrak{i}_{n}} & \Gamma_{n,1}^{n}\ar@{->}[r]^{\mathfrak{pm}_{n}} & \mathfrak{S}_{n}\ar@{->}[r] & 1}
.
\]
The surjections $\left\{ \mathfrak{pm}_{n}\right\} _{n\in\mathbb{N}}$
define a strict monoidal functor $\mathfrak{M}_{2}^{=}\rightarrow\Sigma$.
Let $\Gamma_{-,1}^{\left[-\right]}:\left(\mathbb{N},\leq\right)\rightarrow\mathfrak{Gr}$
and $\Gamma_{-,1}^{-}:\left(\mathbb{N},\leq\right)\rightarrow\mathfrak{Gr}$
be the families of groups defined by $\Gamma_{-,1}^{\left[-\right]}\left(n\right)=\Gamma_{n,1}^{\left[n\right]}$,
$\Gamma_{-,1}^{-}\left(n\right)=\Gamma_{n,1}^{n}$ and $\Gamma_{-,1}^{\left[-\right]}\left(\gamma{}_{n}\right)=\Gamma_{-,1}^{-}\left(\gamma{}_{n}\right)=id_{1}\natural-$
for all natural numbers $n$.

From Theorem \ref{thm:resultBodiTil}, we deduce that for all natural
numbers $q$ such that $n\geq2q$:
\[
H_{q}\left(\Gamma_{n,1}^{\left[n\right]},\mathbb{K}\right)\cong\underset{k+l=q}{\bigoplus}\left(H_{k}\left(\Gamma_{n,1},\mathbb{K}\right)\underset{\mathbb{K}}{\otimes}H_{l}\left(\left(\mathbb{C}P^{\infty}\right)^{\times n},\mathbb{K}\right)\right).
\]
The conjugation action of the symmetric group $\mathfrak{S}_{n}$
on $\Gamma_{n,1}^{\left[n\right]}$ is induced by the natural action
of $\mathfrak{S}_{n}$ on $\varSigma_{n,1}^{n}$ given by permuting
the marked points. Hence, according to the decomposition of the classifying
space associated with the pure mapping class groups in \cite[Theorem 1]{boedigtilman2},
the action of $\mathfrak{S}_{n}$ on $H_{q}\left(\Gamma_{n,1}^{\left[n\right]},\mathbb{K}\right)$
corresponds to permuting the $n$ factors $\mathbb{C}P^{\infty}$:
the $FI$-module structure of the homology groups $H_{q}\left(\Gamma_{n,1}^{\left[n\right]},\mathbb{K}\right)$
is thus well-understood using Künneth formula for $H_{l}\left(\left(\mathbb{C}P^{\infty}\right)^{\times n},\mathbb{Z}\right)$.
A fortiori the homology group $H_{*}\left(\Gamma_{n,1},\mathbb{K}\right)$
is a trivial $\mathfrak{S}_{n}$-module. Recall also from \cite{MadsenWeiss}
that:
\[
H_{*}\left(\Gamma_{\infty,1},\mathbb{K}\right)\cong\mathbb{K}\left[\kappa_{1},\kappa_{2},\ldots\right]
\]
where each $\kappa_{i}$ has degree $2i$.

By Theorem \ref{cor:Mainresultparticularkernels}, we deduce that:
\begin{prop}
\label{prop:Bigresultmcgorient}Let $F$ be an object of $\mathbf{Fct}\left(\mathfrak{U}\mathfrak{M}_{2}^{=},\mathbb{K}\textrm{-}\mathfrak{Mod}\right)$.
For all natural numbers $q$,
\[
H_{q}\left(\Gamma_{\infty,1}^{\infty},F_{\infty}\right)\cong\underset{n\in FI}{Colim}\left(H_{q}\left(\Gamma_{n,1}^{\left[n\right]},F\left(n\right)\right)\right).
\]
In particular, if $F$ factors through the category $FI$, then:
\[
H_{q}\left(\Gamma_{\infty,1}^{\infty},F_{\infty}\right)\cong\underset{n\in FI}{Colim}\left(\underset{k+l=q}{\bigoplus}\left(H_{k}\left(\Gamma_{n,1},\mathbb{K}\right)\underset{\mathbb{K}}{\otimes}H_{l}\left(\left(\mathbb{C}P^{\infty}\right)^{\times n},\mathbb{K}\right)\right)\underset{\mathbb{K}}{\otimes}F\left(n\right)\right),
\]
and a fortiori $H_{2k+1}\left(\Gamma_{\infty,1}^{\infty},F_{\infty}\right)=0$
for all natural numbers $k$.
\end{prop}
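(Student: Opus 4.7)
The plan is to apply Theorem \ref{cor:Mainresultparticularkernels} to the short exact sequence $1 \to \Gamma_{n,1}^{[n]} \to \Gamma_{n,1}^n \to \mathfrak{S}_n \to 1$ induced by $\mathfrak{pm}_n$, then substitute the explicit decomposition of Theorem \ref{thm:resultBodiTil}. First I would verify the hypotheses of Section \ref{subsec:Generalpropertykernels}: the groupoid $\left(\mathfrak{M}_2^{=}, \natural, \varSigma_{0,1}^{0}\right)$ is braided strict monoidal by \cite[Proposition 5.18]{WahlRandal-Williams}; the groupoid $\left(\Sigma, \sqcup, 0\right)$ is symmetric (hence pre-braided) monoidal; the surjections $\left\{\mathfrak{pm}_n\right\}_{n \in \mathbb{N}}$ are compatible with the monoidal products and assemble into a strict monoidal functor $\mathfrak{M}_2^{=} \to \Sigma$ which is the identity on objects; and the stabilization morphisms in the three families are all of the form $id_{\underline{1}} \natural -$. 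Taking $\mathcal{G} = \mathfrak{M}_2^{=}$, $\mathcal{C} = \Sigma$, and $\mathcal{K}$ the subgroupoid of $\mathcal{G}$ with automorphism groups $\Gamma_{n,1}^{[n]}$ realises the sequence (\ref{eq:1}) with $C_n = \mathfrak{S}_n$ finite.

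The first part of Theorem \ref{cor:Mainresultparticularkernels} then yields directly
\[
H_q(\Gamma_{\infty,1}^{\infty}, F_{\infty}) \cong \underset{n \in \mathfrak{U}\Sigma}{\textrm{Colim}}\left(H_q(\Gamma_{n,1}^{[n]}, F(\underline{n}))\right),
\]
which, via the equivalence $\mathfrak{U}\Sigma \simeq FI$, recovers the first stated isomorphism. When $F$ factors through $FI$, the second part specialises this to
\[
H_q(\Gamma_{\infty,1}^{\infty}, F_{\infty}) \cong \underset{n \in FI}{\textrm{Colim}}\left(H_q(\Gamma_{n,1}^{[n]}, \mathbb{K}) \underset{\mathbb{K}}{\otimes} F(n)\right).
\]
Theorem \ref{thm:resultBodiTil} provides, for $n \geq 2q$, an $\mathfrak{S}_n$-equivariant decomposition of $H_q(\Gamma_{n,1}^{[n]}, \mathbb{K})$ as $\bigoplus_{k+l=q}\left(H_k(\Gamma_{n,1}, \mathbb{K}) \otimes H_l((\mathbb{C}P^{\infty})^{\times n}, \mathbb{K})\right)$, with $\mathfrak{S}_n$ acting trivially on the first factor and by permutation of the $n$ factors on the second. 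Since filtered colimits over $FI$ commute with finite direct sums and tensor products and are determined by any cofinal subsystem, the desired decomposition follows.

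For the final vanishing statement, I would invoke the classical facts that $H_*(\Gamma_{\infty,1}, \mathbb{K}) \cong \mathbb{K}[\kappa_1, \kappa_2, \ldots]$ with $|\kappa_i| = 2i$ is concentrated in even degrees by \cite{MadsenWeiss}, and that $H_*((\mathbb{C}P^{\infty})^{\times n}, \mathbb{K})$ is a polynomial algebra on degree-two classes (by the K\"unneth formula), hence also supported in even degrees. Every summand of the stable decomposition with $k + l$ odd therefore vanishes, yielding $H_{2k+1}(\Gamma_{\infty,1}^{\infty}, F_{\infty}) = 0$. The main technical subtlety I foresee is verifying that the decomposition of Theorem \ref{thm:resultBodiTil} extends from $\mathfrak{S}_n$-equivariant isomorphisms at each fixed $n$ to an isomorphism of $FI$-modules compatible with the stabilization maps appearing in the colimit; this should follow from the geometric origin of the decomposition in \cite{boedigtilman2} as a natural equivalence of classifying spaces compatible with the inclusions $\varSigma_{n,1}^{n} \hookrightarrow \varSigma_{n+1,1}^{n+1}$, but requires careful tracking of naturality.
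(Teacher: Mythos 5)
Your proposal is correct and follows essentially the same route as the paper: the paper likewise verifies the monoidal framework of Section \ref{subsec:Generalpropertykernels} for the sequence $1\to\Gamma_{n,1}^{\left[n\right]}\to\Gamma_{n,1}^{n}\to\mathfrak{S}_{n}\to1$, invokes Theorem \ref{cor:Mainresultparticularkernels}, substitutes the decomposition of Theorem \ref{thm:resultBodiTil} together with its $\mathfrak{S}_{n}$-equivariance from \cite{boedigtilman2}, and deduces the odd-degree vanishing from the even-degree concentration of $H_{*}\left(\Gamma_{\infty,1},\mathbb{K}\right)$ and $H_{*}\left(\left(\mathbb{C}P^{\infty}\right)^{\times n},\mathbb{K}\right)$. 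Your indexing of the first colimit over $\mathfrak{U}\Sigma\simeq FI$ is in fact the form delivered directly by Theorem \ref{cor:Mainresultparticularkernels}, and the naturality caveat you raise is exactly the point the paper handles by appealing to the classifying-space decomposition of \cite{boedigtilman2}.
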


\begin{rem}
The key point for the calculations of Section \ref{subsec:Mapping-class-groupappl}
to work is to consider a subgroupoid $\mathfrak{M}_{2}^{'}$ of $\mathfrak{M}_{2}$
so that the braided monoidal structure $\natural$ restricts to it
(i.e $\left(\mathfrak{M}_{2}^{'},\natural,\left(\varSigma_{0,1}^{0},I\right)\right)$
is a braided monoidal groupoid) and so that the number of marked evolves
linearly with respect to the genus: for instance we could consider
the category with objects $\left\{ \varSigma_{n,1}^{2n}\right\} _{n\in\mathbb{N}}$
and all the above results follow mutatis mutandis. However this does
not change the result for the colimit.
\end{rem}

\subsubsection{Particular right-angled Artin groups}

A right-angled Artin group (abbreviated RAAG) is a group with a finite
set of generators $\left\{ s_{i}\right\} _{1\leq i\leq k}$ with $k\in\mathbb{N}$
and relations $s_{i}s_{j}=s_{j}s_{i}$ for some $i,j\in\left\{ 1,\ldots,n\right\} $.
For instance, the free group on $k$ generators $\mathbf{F}_{k}$
is a RAAG. We refer to \cite{VogtmannRaags} or \cite[Section 3]{gandiniwahl}
for more details on these groups.

By \cite[Proposition 3.1]{gandiniwahl}, any RAAG admits a maximal
decomposition as a direct product of RAAGs, unique up to isomorphism
and permutation of the factors. A RAAG is said to be unfactorizable
if its maximal decomposition is itself. We refer to \cite{VogtmannRaags}
or \cite[Section 3]{gandiniwahl} for more details on these groups.
Moreover, we have the following key property:
\begin{lem}
\cite[Proposition 3.3]{gandiniwahl} Let $A$ be a fixed unfactorizable
RAAG different from $\mathbb{Z}$. For all natural numbers $n$, we
have the following split short exact sequence:
\[
\xymatrix{1\ar@{->}[r] & Aut\left(A\right)^{\times n}\ar@{->}[r]^{i_{n}} & Aut\left(A^{\times n}\right)\ar@{->}[r]^{\textrm{{\color{white}iiiiiii}}s_{n}} & \mathfrak{S}_{n}\ar@{->}[r] & 1.}
\]
\end{lem}

Let $\mathcal{R}_{A}$ be the groupoid with the groups $A^{\times n}$
for all natural numbers $n$ as its objects and $Aut\left(A^{\times n}\right)$
as automorphism groups. By \cite[Sections 1 and 5]{gandiniwahl},
the direct product $\times$ induces a strict symmetric monoidal structure
$\left(\mathcal{R}_{A},\times,0_{\mathfrak{Gr}}\right)$ such that
$\mathfrak{U}\mathcal{R}_{A}$ is a pre-braided homogeneous category.
It is clear that the surjections $\left\{ s_{n}\right\} _{n\in\mathbb{N}}$
define a strict monoidal functor $S:\mathcal{R}_{A}\rightarrow\Sigma$.
Let $Aut\left(A^{\times-}\right):\left(\mathbb{N},\leq\right)\rightarrow\mathfrak{Gr}$
and $Aut\left(A\right)^{\times-}:\left(\mathbb{N},\leq\right)\rightarrow\mathfrak{Gr}$
be the families of groups defined by $Aut\left(A^{\times-}\right)\left(n\right)=Aut\left(A^{\times n}\right)$,
$Aut\left(A\right)^{\times-}\left(n\right)=Aut\left(A\right)^{\times n}$
and $Aut\left(A^{\times-}\right)\left(\gamma{}_{n}\right)=Aut\left(A\right)^{\times-}\left(\gamma{}_{n}\right)=id_{1}\times-$
for all natural numbers $n$. By Theorem \ref{cor:Mainresultparticularkernels}:
\begin{prop}
\label{prop:BigresultRAAG}Let $F$ be an object of $\mathbf{Fct}\left(\mathfrak{U}\mathcal{R}_{A},\mathbb{K}\textrm{-}\mathfrak{Mod}\right)$
and $A$ be a fixed unfactorizable right-angled Artin group different
from $\mathbb{Z}$. For all natural numbers $q$, $H_{q}\left(Aut\left(A^{\times\infty}\right),F_{\infty}\right)\cong\underset{n\in FI}{Colim}\left(H_{q}\left(Aut\left(A\right)^{\times n},F\left(n\right)\right)\right)$,
and if $F$ factors through the category $FI$, then:
\begin{equation}
H_{q}\left(Aut\left(A^{\times\infty}\right),F_{\infty}\right)\cong\underset{n\in FI}{Colim}\left(H_{q}\left(Aut\left(A\right)^{\times n},\mathbb{K}\right)\underset{\mathbb{K}}{\otimes}F\left(n\right)\right).\label{eq:homologyRAAG}
\end{equation}
\end{prop}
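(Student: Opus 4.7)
The plan is to specialize Theorem \ref{cor:Mainresultparticularkernels} to the triple
\[
K_n = Aut(A)^{\times n}, \qquad G_n = Aut(A^{\times n}), \qquad C_n = \mathfrak{S}_n,
\]
with the short exact sequence provided by Proposition \ref{prop:keypropertyRAAG}. The finiteness hypothesis on the cokernels used in Proposition \ref{prop:firstpourri} is immediate since each $\mathfrak{S}_n$ is finite, and $\mathbb{K}$ has characteristic zero as standing throughout Section \ref{sec:FI-Mod}.

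Before invoking the theorem, I would verify the structural compatibilities. First, the groupoids $\mathcal{R}_A$ and $\Sigma$ carry the strict symmetric monoidal structures $(\mathcal{R}_A,\times,0_{\mathfrak{Gr}})$ (from \cite[Sections 1 and 5]{gandiniwahl}) and $(\Sigma,\sqcup,0)$, both braided a fortiori, and the maps $\{s_n\}_{n\in\mathbb{N}}$ from Proposition \ref{prop:keypropertyRAAG} assemble into a strict monoidal functor $S:\mathcal{R}_A\rightarrow\Sigma$ that is the identity on objects, playing the role of $\mathfrak{c}$ in the theorem. Second, the stabilization morphisms match the required convention: $Aut(A^{\times-})(\gamma_n)=id_1\times-$ and $\mathfrak{S}_{-}(\gamma_n)=id_1\sqcup-$, and similarly for the kernel family. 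Third, and most substantively, one must check that the split short exact sequence of Proposition \ref{prop:keypropertyRAAG} is natural in $n$, so that it lifts to a short exact sequence in $\mathbf{Fct}((\mathbb{N},\leq),\mathfrak{Gr})$ of the form (\ref{eq:1}); concretely, one verifies that the squares
\[
\xymatrix{Aut(A)^{\times n}\ar@{->}[r]^{i_n}\ar@{->}[d]_{id_1\times-} & Aut(A^{\times n})\ar@{->}[r]^{\,\,\,\,\,\,\,\,s_n}\ar@{->}[d]_{id_1\times-} & \mathfrak{S}_n\ar@{->}[d]_{id_1\sqcup-}\\
Aut(A)^{\times n+1}\ar@{->}[r]^{i_{n+1}} & Aut(A^{\times n+1})\ar@{->}[r]^{\,\,\,\,\,\,\,\,\,s_{n+1}} & \mathfrak{S}_{n+1}}
\]
commute, which is essentially tautological from the explicit description of $i_n$ (assembling $n$ automorphisms into a product automorphism of $A^{\times n}$) and $s_n$ (reading off the induced permutation of the $A$-factors).

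Once these checks are completed, the first isomorphism of the proposition is the direct specialization of the first half of Theorem \ref{cor:Mainresultparticularkernels}, with the colimit indexing category corresponding to $\mathfrak{U}\mathcal{R}_A$ via the functor $S$. For the second isomorphism, the additional assumption that $F$ factors through $FI$ ensures that $F$ comes from a functor on $\mathfrak{U}\mathcal{C}=\mathfrak{U}\Sigma\simeq FI$, so that the second half of Theorem \ref{cor:Mainresultparticularkernels} applies and yields the tensor-product decomposition (\ref{eq:homologyRAAG}).

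The main obstacle is the naturality verification in the third compatibility above; all the remaining ingredients are either formal or quoted from Proposition \ref{prop:keypropertyRAAG} and \cite{gandiniwahl}. This verification is routine but essential: it is exactly what makes the finiteness of $\mathfrak{S}_n$ usable to collapse the Lyndon--Hochschild--Serre spectral sequence in the proof of Proposition \ref{prop:firstpourri}, and hence what licenses the entire machinery of Section \ref{subsec:Generalpropertykernels}.
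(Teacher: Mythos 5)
Your proposal matches the paper's argument exactly: the paper deduces Proposition \ref{prop:BigresultRAAG} directly from Theorem \ref{cor:Mainresultparticularkernels} applied to the split short exact sequence of Proposition \ref{prop:keypropertyRAAG}, with the monoidal structure on $\left(\mathcal{R}_{A},\times,0_{\mathfrak{Gr}}\right)$ and the functor $S:\mathcal{R}_{A}\rightarrow\Sigma$ quoted from \cite{gandiniwahl}, exactly as you do. Your additional explicit check of the naturality of the exact sequence in $n$ is a compatibility the paper leaves implicit, so your write-up is if anything slightly more careful than the original.
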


\begin{cor}
\label{cor:BigresultRAAG}Let $A$ be a fixed unfactorizable right-angled
Artin group different from $\mathbb{Z}$, such that there exists $N_{A}\in\mathbb{N}$
such that $H_{q}\left(Aut\left(A\right),\mathbb{K}\right)=0$ for
$1\leq q\leq N_{A}$. Then, for all objects $F$ of $\mathbf{Fct}\left(\mathfrak{U}\mathcal{R}_{A},\mathbb{K}\textrm{-}\mathfrak{Mod}\right)$
factoring through the category $FI$:
\[
H_{q}\left(Aut\left(A^{\times\infty}\right),F_{\infty}\right)=0,
\]
for all natural numbers $q$ such that $1\leq q\leq N_{A}$.
\end{cor}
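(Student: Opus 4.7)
The plan is to apply the decomposition from Proposition \ref{prop:BigresultRAAG} and reduce the statement to showing that the homology of the product groups $Aut(A)^{\times n}$ vanishes in the relevant degree range. Since $F$ is assumed to factor through $FI$, the isomorphism (\ref{eq:homologyRAAG}) applies, so it suffices to prove that
\[
H_{q}\bigl(Aut(A)^{\times n},\mathbb{K}\bigr)\underset{\mathbb{K}}{\otimes}F(n)=0
\]
for every $n\in\mathbb{N}$ and every $q$ with $1\leq q\leq N_{A}$; taking the colimit over $FI$ then immediately yields the conclusion.

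To establish this vanishing, I would invoke the Künneth formula for group homology over the field $\mathbb{K}$, which gives a natural isomorphism
\[
H_{q}\bigl(Aut(A)^{\times n},\mathbb{K}\bigr)\cong\bigoplus_{i_{1}+\cdots+i_{n}=q}H_{i_{1}}\bigl(Aut(A),\mathbb{K}\bigr)\underset{\mathbb{K}}{\otimes}\cdots\underset{\mathbb{K}}{\otimes}H_{i_{n}}\bigl(Aut(A),\mathbb{K}\bigr).
\]
Fix $q$ with $1\leq q\leq N_{A}$ and consider any index tuple $(i_{1},\ldots,i_{n})$ summing to $q$. Since $q\geq1$, at least one $i_{j}$ is strictly positive; for that index one has $1\leq i_{j}\leq q\leq N_{A}$, so by hypothesis $H_{i_{j}}(Aut(A),\mathbb{K})=0$, and the corresponding tensor summand vanishes. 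Hence every summand in the Künneth decomposition is zero, which proves the required vanishing.

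Combining the two steps, the colimit in (\ref{eq:homologyRAAG}) is taken over a diagram whose terms $H_{q}(Aut(A)^{\times n},\mathbb{K})\underset{\mathbb{K}}{\otimes}F(n)$ are all zero, giving $H_{q}(Aut(A^{\times\infty}),F_{\infty})=0$ in the stated range. I do not anticipate a substantive obstacle here: the content of the corollary is the reduction provided by Proposition \ref{prop:BigresultRAAG}, and the remaining argument is a direct Künneth-style computation using that $\mathbb{K}$ has characteristic zero (so that no Tor terms appear and the formula is strict). The only mild subtlety worth flagging is to verify that the hypothesis ``factors through $FI$'' is genuinely needed to pull the constant coefficient homology out of the colimit, which is precisely the role played by the second assertion of Proposition \ref{prop:BigresultRAAG}.
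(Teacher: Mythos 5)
Your proposal is correct and follows exactly the paper's own argument: apply the Künneth formula over the field $\mathbb{K}$ to deduce $H_{q}\left(Aut\left(A\right)^{\times n},\mathbb{K}\right)=0$ for $1\leq q\leq N_{A}$, then conclude via the colimit formula (\ref{eq:homologyRAAG}) of Proposition \ref{prop:BigresultRAAG}. You merely spell out the Künneth step in more detail than the paper does.
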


\begin{proof}
It follows from Künneth Theorem that, for all natural numbers $q$
such that $1\leq q\leq N_{A}$, $H_{q}\left(Aut\left(A\right)^{\times n},\mathbb{K}\right)=0$.
Then, the result follows from (\ref{eq:homologyRAAG}).
\end{proof}
\begin{example}
\label{exa:caseofautfree}Let $\mathbf{F}_{k}$ be the free group
on $k$ generators. By \cite[Corollary 1.2]{Galatius}, for $k\geq2q+1$
and $q\neq0$, $H_{q}\left(Aut\left(\mathbf{F}_{k}\right),\mathbb{K}\right)=0$.
Let $F$ be an object of $\mathbf{Fct}\left(\mathfrak{U}\mathcal{R}_{\mathbf{F}_{k}},\mathbb{K}\textrm{-}\mathfrak{Mod}\right)$
factoring through the category $FI$. Then, for all natural numbers
$q$ and $k$ such that $1\leq q\leq\frac{k-1}{2}$:
\[
H_{q}\left(Aut\left(\left(\mathbf{F}_{k}\right)^{\times\infty}\right),F_{\infty}\right)=0.
\]
In particular, $H_{q}\left(Aut\left(\left(\mathbf{F}_{\infty}\right)^{\times\infty}\right),F_{\infty}\right)=0$
for all $FI$-module $F$.
\end{example}

\bibliographystyle{plain}
\bibliography{bibliographiethese}

\lyxaddress{\textit{E-mail address: }\texttt{artsou@hotmail.fr}}
\end{document}